\newtheorem{theorem}{Theorem}[section]
\newtheorem{Assumption}{Assumption}[section]
\newtheorem{Example}{Example}[section]
\newtheorem{lemma}{Lemma}[section]
\newlist{todolist}{itemize}{2}
\setlist[todolist]{label=$\square$}
\begin{document}
	\title{PARAMETER UNIFORM NUMERICAL METHOD FOR SINGULARLY PERTURBED TWO PARAMETER PARABOLIC PROBLEM WITH DISCONTINUOUS CONVECTION COEFFICIENT AND SOURCE TERM}
	\date{}
\author{Nirmali Roy$^1$, Anuradha Jha$^{2^*}$}
\date{%
    $^{1,2}$Indian Institute of Information Technology Guwahati, Bongora, India,781015\\
   *Corresponding author(s). E-mails: {anuradha@iiitg.ac.in}
}
	\maketitle
	\section*{Abstract}In this article, we have considered a time-dependent two-parameter singularly perturbed parabolic problem with discontinuous convection coefficient and source term. The problem contains the parameters $\epsilon$  and $\mu$ multiplying the diffusion and convection coefficients, respectively. A boundary layer develops on both sides of the boundaries as a result of these parameters.  An interior layer forms near the point of discontinuity due to the discontinuity in the convection and source term. The width of the interior and boundary layers depends on the ratio of the perturbation parameters. We discuss the problem for ratio $\displaystyle\frac{\mu^2}{\epsilon}$. We used an upwind finite difference approach on a Shishkin-Bakhvalov mesh in the space and the Crank-Nicolson method in time on uniform mesh. At the point of discontinuity, a three-point formula was used. This method is uniformly convergent with second order in time and first order in space. Shishkin-Bakhvalov mesh provides first-order convergence; unlike the Shishkin mesh, where a logarithmic factor deteriorates the order of convergence. Some test examples are given to validate the results presented.\\
		
		\textbf{Keywords}: singularly perturbed, two-parameter, parabolic problem, discontinuous data,  boundary and interior layers, Crank-Nicolson method, Shishkin-Bakhvalov mesh


\section{Introduction}	
Here, we have considered the following two-parameter singularly perturbed parabolic boundary value problem(TPSPP-BVP) on the domain $\Omega=(0,1)\times (0,T]$:
\begin{equation}
\begin{aligned}
\label{twoparaparabolic}
&\mathcal{L}_{\epsilon,\mu} u(x,t)\equiv(\epsilon u_{xx}+\mu au_x-bu-cu_t)(x,t)=f(x,t),~~(x,t)\in(\Omega^{-} \cup \Omega^{+}),\\
&u(0,t)
=p(t),~ (0,t)\in\Gamma_l;\\
&u(1,t)=r(t),(1,t)\in\Gamma_r;\\
&u(x,0)=q(x), (x,0)\in\Gamma_b,
\end{aligned}
\end{equation}
where $0<\epsilon\ll1$ and $0<\mu\le1$ are two singular perturbation parameters and $\Omega^{-}=(0,d)\times(0,T],\Omega^{+}=(d,1)\times(0,T]$. The convection coefficient $a(x,t)$ and source term $f(x,t)$ are discontinuous at $(d,t)\in \Omega ~\forall ~t.$
	Also $a(x,t)\le-\alpha_1<0, (x,t)\in\Omega^-$ and $a(x,t)\ge\alpha_2>0, (x,t)\in\Omega^+$, $\alpha_1, \alpha_2$ are positive constants. The functions $a(x,t)$ and $ f(x,t)$  are sufficiently smooth functions on $(\Omega^- \cup \Omega^+)$.  In addition, we assume the jumps of $a(x,t)$ and $f(x,t)$ at $(d,t)$ satisfy $|[a](d,t)|<C,~~|[f](d,t)|<C$, where the jump of $\omega$ at $(d,t)$ is defined as  $[\omega](d,t)=\omega(d+,t)-\omega(d-,t)$. 
	The coefficients $ b(x, t)$ and $c(x, t)$ are assumed to be sufficiently smooth functions on $\Omega$ such that $b(x,t)\ge\beta>0$ and $ c(x,t)>\eta>0$. The boundary data $p(t), r(t)$ and initial data $q(x)$ are sufficiently smooth on the domain and satisfy the compatibility condition. Let
	$\Gamma^{-}=(0,d),\Gamma^{+}=(d,1), \Gamma=(0,1),$  $\Gamma_l=\{(0,t)|0\le t\le T\},\Gamma_r=\{(1,t)|0\le t\le T\}$, $\Gamma_b=\{(x,0)|0\le x\le1\}$ and $\Gamma_{c}=\Gamma_{l}\cup\Gamma_{b}\cup\Gamma_{r}$.
 Under the above assumptions, the problem \eqref{twoparaparabolic} has a unique continuous solution in the domain $\bar{\Omega}$. 
	
	The solution to problem \eqref{twoparaparabolic} contains interior layers at the points of discontinuity $ (d,t),\forall t \in (0,T]$ because of the discontinuity in the convection coefficient and source term. Additionally, the solution displays boundary layers at $\Gamma_{l}$ and $\Gamma_{r}$ as a result of the presence of perturbation parameters $\epsilon$ and $\mu$.
	
     Two-parameter singularly perturbed problems was first studied asymptotically by O'Malley in {\cite{OM1,OM,OM2}}. He observed that the solution to these problems depends on the perturbation parameters $\epsilon$ and $\mu$ as well as on their ratio. So, we discuss the solution of Eq. \eqref{twoparaparabolic} under the following cases:\\
	\textbf{Case (i)}: $\sqrt{\alpha}\mu \le \sqrt{\rho\epsilon}$ , where $\rho=\min\limits_{x\in\bar{\Omega}}\bigg\{\frac{|b(x,t)|}{|a(x,t)|}\bigg\}$, the boundary layers of equal width of $\mathcal{O}(\sqrt{\epsilon})$ appear near the boundaries.\\
	\textbf{Case (ii)}: $\sqrt{\alpha}\mu > \sqrt{\rho\epsilon}$, the solution has boundary layers at both the boundaries of different widths.
	
	In recent years, several authors have developed numerical methods for two-parameter singularly perturbed parabolic problems with smooth data \cite{BDD,GKD, KS,MD,M,ER1,ZAM}. In the case of non-smooth data, the study is very limited. M. Chandru et al. in \cite{MC1} considered the problem \eqref{twoparaparabolic} and proved that the upwind scheme on space on Shishkin mesh and backward Euler scheme on time is almost first-order accurate. D. Kumar et al. in \cite{DP} gave a numerical method with parameter uniform convergence of order two in time and almost order one in space for the problem of the same type. They used the Crank-Nicolson method in time on a uniform mesh and the upwind method in space on a Shishkin mesh.
	Some work on one parameter parabolic singularly perturbed problem with discontinuous data includes \cite{TA,ES}.
	
	 In this paper, we have used Crank-Nicolson scheme \cite{CN1} on time on a uniform mesh and upwind scheme on an appropriately defined  Shishkin-Bakhvalov mesh \cite{TL1,TL2} in space for case (i). At the point of discontinuity, we used a three-point scheme to resolve it. In the Shishkin-Bakhvalov mesh, we choose transition point as in Shishkin mesh and use graded mesh (as in Bakhvalov \cite{B}) in the layer region. In the outer region a uniform mesh is used. We are able to achieve first-order convergence in space due to Shishkin-Bakhvalov mesh and second-order in time due to Crank-Nicolson method.
	 
	 The article is organised in the order listed below: The a\textit{priori} bounds on the solution and its derivatives are given in section 2. The decomposition of the continuous solution into regular and singular components and bounds on the derivatives of regular and singular components are also discussed here.
     In section 3, numerical method and mesh construction is discussed. In section 4, parameter uniform error estimates are established for case $\sqrt{\alpha}\mu \le \sqrt{\rho\epsilon}$. The numerical examples in section 5 support the theoretical results given in previous section. A summary of the work is presented in section 6.
	 
	{\bf{Notation}}:  The norm used is the maximum norm given by
	$$\|u\|_{\Omega}=\max_{(x,t)\in \Omega}|u(x,t)|.$$
Throughout the paper, $C, C_{1}, C_{2}$ will be denoted as a generic positive constant that is independent of perturbation parameters $\epsilon, \mu$ and mesh size.


\section{A \textit{Priori} Derivatives Bounds of the Solution}
\label{2}
\begin{theorem}\cite{DP}
	Suppose that a function $z(x,t)\in C^{0}(\bar{\Omega})\cap C^{2}(\Omega^{-}\cup \Omega^{+})$ satisfies $z(x,t)\ge 0, ~\forall(x,t)\in \Gamma_{c},~\mathcal{L}z(x,t)\le 0, \forall(x,t)\in(\Omega^{-}\cup\Omega^{+})$ and $[z_{x}](d,t) \le 0,t>0$ then $z(x,t)\geq 0 \; \forall ~(x,t)\in \bar{\Omega}$.
\end{theorem}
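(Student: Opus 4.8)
The plan is to argue by contradiction using the classical parabolic minimum principle, adapted to the presence of the interior interface at $x=d$. Suppose the conclusion fails, so that $m:=\min_{\bar{\Omega}} z<0$, and let $(x^*,t^*)\in\bar{\Omega}$ be a point at which this minimum is attained. Since $z\ge 0$ on $\Gamma_c$ by hypothesis while $z(x^*,t^*)=m<0$, the point $(x^*,t^*)$ cannot lie on $\Gamma_c$; hence $t^*\in(0,T]$ and $x^*\in(0,1)$. I would then split the analysis according to whether $x^*\ne d$ or $x^*=d$, the latter being the genuinely new difficulty created by the discontinuity.

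First I would treat the case $x^*\in(0,d)\cup(d,1)$, where $z$ is $C^2$ in a full two-sided neighbourhood. At such an interior minimiser the standard first- and second-order conditions give $z_x(x^*,t^*)=0$ and $z_{xx}(x^*,t^*)\ge 0$; for the time variable one has $z_t(x^*,t^*)=0$ when $t^*<T$ and $z_t(x^*,t^*)\le 0$ when $t^*=T$, using that $z(x^*,\cdot)$ has a minimum at $t^*$. Substituting into the operator and using $b\ge\beta>0$, $c>\eta>0$ together with $m<0$,
\begin{equation*}
\mathcal{L}z(x^*,t^*)=\epsilon z_{xx}+\mu a z_x-bz-cz_t\ge -b\,z(x^*,t^*)=-b\,m>0,
\end{equation*}
which contradicts $\mathcal{L}z\le 0$ on $\Omega^-\cup\Omega^+$.

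The main obstacle is the remaining case $x^*=d$, where $z_{xx}$ need not exist and $\mathcal{L}z$ is not defined, so the interior argument cannot be applied directly; this is exactly where the interface hypothesis $[z_x](d,t)\le 0$ enters. Since $z$ attains a minimum at $(d,t^*)$ and is continuous across the interface, comparing $z(x,t^*)$ with $z(d,t^*)=m$ for $x$ slightly less than and slightly greater than $d$ yields the one-sided inequalities $z_x(d^-,t^*)\le 0\le z_x(d^+,t^*)$, whence $[z_x](d,t^*)=z_x(d^+,t^*)-z_x(d^-,t^*)\ge 0$. Combined with the assumed $[z_x](d,t^*)\le 0$, these two one-sided bounds force $z_x(d^-,t^*)=z_x(d^+,t^*)=0$. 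I would then return to the left subdomain: a one-sided Taylor expansion of $z(\cdot,t^*)$ about $d$ with vanishing first derivative, together with $z(x,t^*)\ge m$, shows $z_{xx}(\xi,t^*)\ge 0$ along a sequence $\xi\to d^-$, while $z_x(\xi,t^*)\to 0$, $z(\xi,t^*)\to m$, and $z_t(\xi,t^*)\to z_t(d^-,t^*)\le 0$. Hence $\mathcal{L}z(\xi,t^*)$ admits the strictly positive lower bound $-b\,m>0$ as $\xi\to d^-$, contradicting $\mathcal{L}z\le 0$ on $\Omega^-$. Since every case yields a contradiction, no negative minimum can occur and $z\ge 0$ on $\bar{\Omega}$.

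I would expect the junction analysis to be the delicate step: the operator degenerates at the interface, and the argument hinges on showing that the sign condition on the jump $[z_x]$ is precisely strong enough to force both one-sided slopes to vanish and thereby recover, in a limiting sense, the convexity information lost at $x=d$. The interior and parabolic-boundary cases are then routine applications of the extremum conditions together with the sign assumptions $b\ge\beta>0$ and $c>\eta>0$.
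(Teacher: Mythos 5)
Your proof is correct and takes essentially the same route as the paper, which does not prove the theorem itself but defers to \cite{DP}, where the argument is exactly this classical one: exclude a negative minimum from $\Gamma_{c}$, dispose of minimisers in $\Omega^{-}\cup\Omega^{+}$ by the interior first- and second-order conditions, and at $x=d$ use the one-sided minimum conditions $z_{x}(d^-,t^{*})\le 0\le z_{x}(d^+,t^{*})$ together with the hypothesis $[z_{x}](d,t^{*})\le 0$ to force both one-sided slopes to vanish, then contradict $\mathcal{L}z\le 0$ by a one-sided limiting argument. The only assertion you leave unjustified, namely $z_{t}(\xi,t^{*})\to z_{t}(d^-,t^{*})\le 0$, rests on the tacit assumption that $z_{t}$ (like $z_{x}$, which the statement already requires in order for $[z_{x}](d,t)$ to be defined) extends continuously to $x=d$ from each side, so that the time-derivative of $z(d,\cdot)$ at its minimum controls the limit; this is the same implicit regularity used in \cite{DP} and throughout this literature, hence a presentational rather than a mathematical gap.
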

\begin{theorem}\cite{DP}
	The bounds on $u(x,t)$ is given by 
	$$\|u\|_{\bar{\Omega}}\le \|u\|_{\Gamma_{c}}+\frac{\|f\|_{\Omega^{-}\cup\Omega^{+}}}{\beta}.$$
\end{theorem}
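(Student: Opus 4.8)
The plan is to invoke the comparison principle of Theorem 2.1 against a pair of constant-plus-solution barriers. Set the constant
$$M=\|u\|_{\Gamma_{c}}+\frac{\|f\|_{\Omega^{-}\cup\Omega^{+}}}{\beta}$$
and define $\psi^{\pm}(x,t)=M\pm u(x,t)$ on $\bar{\Omega}$. The target bound $\|u\|_{\bar{\Omega}}\le M$ is equivalent to $|u(x,t)|\le M$ for all $(x,t)$, which in turn is equivalent to $\psi^{+}\ge 0$ and $\psi^{-}\ge 0$ simultaneously. So the whole proof reduces to checking that each of $\psi^{+},\psi^{-}$ satisfies the three hypotheses of Theorem 2.1.

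First I would verify the boundary hypothesis. On $\Gamma_{c}$ we have $|u|\le\|u\|_{\Gamma_{c}}$, and since $\|f\|_{\Omega^{-}\cup\Omega^{+}}/\beta\ge 0$ it follows that $\psi^{\pm}=M\pm u\ge\|u\|_{\Gamma_{c}}\pm u\ge 0$ there. Next I would compute the operator action. As $M$ is constant, all its derivatives vanish and $\mathcal{L}_{\epsilon,\mu}M=-bM$; combining this with $\mathcal{L}_{\epsilon,\mu}u=f$ gives $\mathcal{L}_{\epsilon,\mu}\psi^{\pm}=-bM\pm f$. The core of the argument is then a short sign chase: writing $bM=b\|u\|_{\Gamma_{c}}+\frac{b}{\beta}\|f\|_{\Omega^{-}\cup\Omega^{+}}$ and using $b\ge\beta>0$ (hence $b/\beta\ge 1$) together with $\pm f\le\|f\|_{\Omega^{-}\cup\Omega^{+}}$, one gets $bM\ge\frac{b}{\beta}\|f\|_{\Omega^{-}\cup\Omega^{+}}\ge\pm f$, so that $\mathcal{L}_{\epsilon,\mu}\psi^{\pm}=-bM\pm f\le 0$ on $\Omega^{-}\cup\Omega^{+}$, which is the required differential inequality.

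The last hypothesis is the interface condition $[\psi^{\pm}_{x}](d,t)\le 0$. Because $M$ is constant it contributes no jump, so $[\psi^{\pm}_{x}](d,t)=\pm[u_{x}](d,t)$, and I would argue that $[u_{x}](d,t)=0$. The point is that the principal coefficient $\epsilon$ multiplying $u_{xx}$ is continuous and nonzero, while $a$ and $f$ carry only bounded jumps; integrating $\epsilon u_{xx}=f-\mu a u_{x}+bu+cu_{t}$ across $x=d$ and letting the interval shrink forces $\epsilon[u_{x}](d,t)=0$, so the continuous solution is in fact $C^{1}$ at the interface. Then $[\psi^{\pm}_{x}](d,t)=0\le 0$ holds for both signs, Theorem 2.1 applies to each barrier, and $\psi^{\pm}\ge 0$ on $\bar{\Omega}$ yields the claim.

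I expect the main obstacle to be precisely the justification of $[u_{x}](d,t)=0$: the boundary check and the sign chase are routine, but the two-sided barrier construction succeeds only if the derivative jump is nonpositive for both $+u$ and $-u$, which is exactly the $C^{1}$-regularity of the exact solution across the point of discontinuity that must be pinned down from the structure of the equation.
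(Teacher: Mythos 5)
Your proof is correct and is essentially the argument behind the statement as cited: the paper itself gives no proof (it quotes the result from \cite{DP}), and the proof there is exactly your construction, the two-sided barrier $\psi^{\pm}=M\pm u$ with $M=\|u\|_{\Gamma_{c}}+\|f\|_{\Omega^{-}\cup\Omega^{+}}/\beta$ fed into the minimum principle of Theorem 2.1. Your one worry --- justifying $[u_{x}](d,t)=0$ --- is not a real obstacle: the $C^{1}$-matching of $u$ across $x=d$ is part of the formulation of the interface problem (it is what makes the continuous solution unique), and it is reflected in the paper's decomposition, where the jump conditions $[w_{r}](d,t)=-([v]+[w_{l}])(d,t)$ and $[(w_{r})_{x}](d,t)=-([v_{x}]+[(w_{l})_{x}])(d,t)$ force $[u](d,t)=[u_{x}](d,t)=0$, so both barriers satisfy the interface hypothesis automatically.
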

The solution $u(x,t)$ of Eq.\eqref{twoparaparabolic}  is divided as in \cite{DP} into layers and regular components as $u(x,t)=v(x,t)+w_{l}(x,t)+w_{r}(x,t)$.
The regular component $v(x,t) $ satisfies the following equation:
\begin{equation}
\begin{aligned}
\label{v}
&\mathcal{L}v(x,t)=f(x,t),~~(x,t)\in\Omega^{-}\cup\Omega^{+},\\
&v(0,t)=u(0,t),v(1,t)=u(1,t),~\forall~t\in(0,T],\\
&v(x,0)=u(x,0), ~\forall~x\in \Gamma^{-}\cup \Gamma^{+},\\
&v(d-,t),v(d+,t) ~\text{are chosen suitably}~ \forall~t\in(0,T].
\end{aligned}
\end{equation}
$v(x,t)$ can further be decomposed as
$$v(x,t)=\left\{
\begin{array}{ll}
\displaystyle v^{-}(x,t), & \hbox{ $(x,t)\in \Omega^-,$ }\\
v^{+}(x,t), & \hbox{ $(x,t)\in \Omega^+,$ }
\end{array}
\right.$$
where $v^{-}$ and $v^{+}$ are the left and right regular components respectively.

The singular components $w_{l}(x,t)$ and $w_{r}(x,t)$ are the solutions of
\begin{equation}
\begin{aligned}
\label{lw}
&\mathcal{L}w_{l}(x,t)=0,~~(x,t)\in \Omega^{-}\cup\Omega^{+},\\
&w_{l}(0,t)=u(0,t)-v(0,t)-w_{r}(0,t),~\forall~ t\in(0,T],\\
&w_{l}(1,t) \text{are chosen suitably}~\forall~ t\in(0,T],\\
&w_{l}(x,0)=u(x,0),~\forall~x\in \Gamma^{-}\cup \Gamma^{+}, \\
\end{aligned}
\end{equation}
and 
\begin{equation}
\begin{aligned}
\label{rw}
&\mathcal{L}w_{r}(x,t)=0,~~(x,t)\in \Omega^{-}\cup\Omega^{+},\\
&w_{r}(1,t)=u(1,t)-v(1,t)-w_{l}(1,t),~\forall~t\in(0,T], \\
& w_{r}(0,t) ~\text{are chosen suitably} ~\forall~ t\in(0,T],\\
&w_{r}(x,0)=u(x,0),~\forall~x\in \Gamma^{-}\cup \Gamma^{+},
\end{aligned}
\end{equation}
respectively. Also 
\[[w_{r}](d,t)=-([v]+[w_{l}])(d,t),~ [(w_{r})_{x}](d,t)=-([v_{x}]+[(w_{l})_{x}])(d,t),~\forall~ t\in(0,T].\]
Further, the singular components $w_{l}(x,t)$ and $w_{r}(x,t)$ are decomposed as
$$w_{l}(x,t)=\left\{
\begin{array}{ll}
\displaystyle w_{l}^{-}(x,t), & \hbox{ $(x,t)\in \Omega^-,$ }\\
w_{l}^{+}(x,t), & \hbox{ $(x,t)\in \Omega^+,$ }
\end{array}
\right.
w_{r}(x,t)=\left\{
\begin{array}{ll}
\displaystyle w_{r}^{-}(x,t), & \hbox{ $(x,t)\in \Omega^-,$ }\\
w_{r}^{+}(x,t), & \hbox{ $(x,t)\in \Omega^+,$ }
\end{array}
\right.$$
where $w_{l}^{-}$, $w_{r}^{-}$ are left components and  $w_{l}^{+}$, $w_{r}^{+}$ are right components of the singular component respectively.
Hence, the unique solution $u(x,t)$ to the problem Eq.\eqref{twoparaparabolic} is written as
$$u(x,t)=\left\{
\begin{array}{ll}
\displaystyle (v^{-}+w_{l}^{-}+w_{r}^{-})(x,t),\hspace{4.9cm} (x,t)\in\Omega^{-},\\
(v^{-}+w_{l}^{-}+w_{r}^{-})(d-,t)=(v^{+}+w_{l}^{+}+w_{r}^{+})(d+,t), ~~ (x,t)=(d,t),\forall~ t\in(0,T],\\
(v^{+}+w_{l}^{+}+w_{r}^{+})(x,t),\hspace{4.9cm}(x,t)\in\Omega^{+}.
\end{array}
\right.$$
\begin{theorem}
	\label{wlb}
The left layer and right layer components $w_{l}(x,t)$ and $w_{r}(x,t)$ satisfy the following inequalities for case $\sqrt{\alpha}\mu\le\sqrt{\rho \epsilon}$
	$$\|w_{l}(x,t)\|_{\Omega^{-}\cup
		\Omega^{+}} \le \left\{
	\begin{array}{ll}
	\displaystyle Ce^{-\theta_{1}x}, & \hbox{ $x\in \Omega^{-},$ }\\
	Ce^{-\theta_{2}(x-d)}, & \hbox{ $x\in \Omega^{+}$,}
	\end{array}
	\right.$$
	$$\|w_{r}(x,t)\|_{\Omega^{-}\cup
		\Omega^{+}} \le \left\{
	\begin{array}{ll}
	\displaystyle Ce^{-\theta_{2}(d-x)}, & \hbox{ $x\in \Omega^{-}$, }\\
	Ce^{-\theta_{1}(1-x)}, & \hbox{ $x\in \Omega^{+}$,}
	\end{array}
	\right.$$
	where
\begin{equation}
\label{theta}
	\theta_{1}=\frac{\sqrt{\rho \alpha}}{\sqrt{\epsilon}},~~\theta_{2}=\frac{\sqrt{\rho \alpha}}{2\sqrt{\epsilon}}.
\end{equation}
	For the case $\sqrt{\alpha}\mu>\sqrt{\rho \epsilon}$, the bounds of left layer and right layer components satisfy the following inequalities
	$$\|w_{l}(x,t)\|_{\Omega^{-}\cup
		\Omega^{+}} \le \left\{
	\begin{array}{ll}
	\displaystyle Ce^{-\theta_{1}x}, & \hbox{ $x\in \Omega^{-},$ }\\
	Ce^{-\theta_{2}(x-d)}, & \hbox{ $x\in \Omega^{+}$,}
	\end{array}
	\right.$$
	$$\|w_{r}(x,t)\|_{\Omega^{-}\cup
		\Omega^{+}} \le \left\{
	\begin{array}{ll}
	\displaystyle Ce^{-\theta_{2}(d-x)}, & \hbox{ $x\in \Omega^{-}$, }\\
	Ce^{-\theta_{1}(1-x)}, & \hbox{ $x\in \Omega^{+}$,}
	\end{array}
	\right.$$
\end{theorem}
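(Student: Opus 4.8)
The plan is to bound each layer component by an explicit exponential barrier and then invoke the comparison principle of Theorem 2.1. I would treat $w_l$ and $w_r$ separately, and within each I would argue piecewise on $\Omega^-$ and $\Omega^+$, patching the two pieces at the interface $x=d$ through the derivative-jump hypothesis that Theorem 2.1 demands. Throughout, the constant $C$ is enlarged as needed and absorbs all the boundary data.

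For the left layer I would take the barrier
\[
\Psi_l(x,t)=\begin{cases} Ce^{-\theta_1 x}, & (x,t)\in\Omega^-,\\ Ce^{-\theta_2(x-d)}, & (x,t)\in\Omega^+,\end{cases}
\]
with $\theta_1,\theta_2$ as in \eqref{theta}, and consider the two functions $\Phi^{\pm}=\Psi_l\pm w_l$. To apply Theorem 2.1 to each $\Phi^{\pm}$ I must check three things: (i) $\Phi^{\pm}\ge 0$ on $\Gamma_c$, which follows by choosing $C$ large enough to dominate $w_l$ on $\Gamma_l,\Gamma_r,\Gamma_b$ using the boundary and initial data in \eqref{lw} together with the solution bound of Theorem 2.2; (ii) $\mathcal{L}\Phi^{\pm}\le 0$ on $\Omega^-\cup\Omega^+$, which, since $\mathcal{L}w_l=0$, reduces to $\mathcal{L}\Psi_l\le 0$; and (iii) $[(\Phi^{\pm})_x](d,t)\le 0$.

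The heart of the argument is step (ii). A direct computation gives $\mathcal{L}\big(e^{-\theta(x-x_0)}\big)=e^{-\theta(x-x_0)}\big(\epsilon\theta^2-\mu a\theta-b\big)$, so everything rests on the sign of the quadratic $g(\theta)=\epsilon\theta^2-\mu a\theta-b$. The choice in \eqref{theta} makes $\epsilon\theta_1^2=\rho\alpha$ and $\epsilon\theta_2^2=\rho\alpha/4$, so the diffusion contribution satisfies $\epsilon\theta_i^2\le\rho\alpha\le\rho|a|\le b$ by the definition of $\rho$. For the convection contribution, the case (i) hypothesis $\sqrt{\alpha}\,\mu\le\sqrt{\rho\epsilon}$ is exactly what yields $\mu|a|\theta_1\le\rho|a|\le b$. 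In the region where convection aids the decay, namely $\Omega^+$ for $w_l$ (there $a>0$ and the barrier is decreasing), the term $-\mu a\theta_2$ is negative and $g(\theta_2)\le 0$ follows at once; the delicate region is the adverse one, $\Omega^-$ (where $a<0$ makes $-\mu a\theta_1>0$), and it is precisely there that the sharp balance between $\epsilon\theta_1^2$, $\mu|a|\theta_1$ and $b$ afforded by case (i) must be exploited.

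The main obstacle I anticipate is step (iii), reconciling the two regional barriers at the discontinuity: since $\theta_1\neq\theta_2$ the barrier $\Psi_l$ has a genuine jump in its $x$-derivative at $d$, and one must verify $[(\Psi_l)_x](d,t)\le 0$ using $\theta_1>\theta_2>0$ together with the sign of $[(w_l)_x](d,t)$ prescribed by the decomposition. The halving $\theta_2=\theta_1/2$ is the device that simultaneously keeps $g(\theta_2)\le 0$ once the layer has crossed into the wrong-sign convection side and secures the correct sign of the derivative jump. Once (i)--(iii) hold, Theorem 2.1 gives $\Phi^{\pm}\ge 0$, i.e. $|w_l|\le\Psi_l$, which is the stated bound. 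The right layer $w_r$ is handled by the mirror-image barrier with the roles of $\theta_1$ and $\theta_2$ interchanged (that is, $\theta_2$ on $\Omega^-$ and $\theta_1$ on $\Omega^+$) and the jump data from \eqref{rw}; the case $\sqrt{\alpha}\,\mu>\sqrt{\rho\epsilon}$ then follows from the same barriers by the analogous estimate of $g(\theta)$.
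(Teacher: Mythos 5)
Your barrier-plus-comparison-principle strategy is the right family of argument (it is essentially the technique of \cite{ES, ER1} that the paper's one-line proof invokes), but two of your three steps fail as written, and the first failure is fatal. In step (ii), in the adverse region ($\Omega^{-}$ for $w_l$, symmetrically $\Omega^{+}$ for $w_r$), you must show $g(\theta_1)=\epsilon\theta_1^{2}+\mu|a|\theta_1-b\le 0$, but your argument only establishes that each bad term \emph{separately} is at most $b$: $\epsilon\theta_1^{2}=\rho\alpha\le\rho|a|\le b$ and $\mu|a|\theta_1\le\rho|a|\le b$. Two terms each bounded by $b$ can sum to $2b$; what the hypotheses actually give is $g(\theta_1)\le\rho\alpha+\rho|a|-b$, and since the definition of $\rho$ yields only $b\ge\rho|a|$, this upper bound can equal $\rho\alpha>0$. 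The ``sharp balance afforded by case (i)'' that you appeal to does not exist: for constant coefficients with $|a|=\alpha$, $b=\rho\alpha$ and $\mu=\sqrt{\rho\epsilon/\alpha}$ (the case-(i) boundary), the decaying root of the characteristic equation $\epsilon\lambda^{2}-\mu|a|\lambda-b=0$ has modulus $\frac{\sqrt{5}-1}{2}\,\theta_1<\theta_1$, so $Ce^{-\theta_1 x}$ is not a supersolution for any constant $C$, and no barrier argument can rescue the full exponent. The computation closes only with the halved rate $\sqrt{\rho\alpha}/(2\sqrt{\epsilon})$ --- which is exactly what the paper itself uses for \emph{both} $\theta_1$ and $\theta_2$ in the discrete analysis of Section 4, where the estimate runs $\frac{\rho\alpha}{2}+\frac{\rho|a|}{2}\le\rho|a|\le c$. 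Your proof, repaired, gives the bound with $\theta_1$ replaced by $\theta_1/2$; it does not prove the inequality as stated.

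Second, your appeal to Theorem 2.1 is inadmissible, because that theorem requires $z\in C^{0}(\bar\Omega)$ while $\Phi^{\pm}=\Psi_l\pm w_l$ is discontinuous at $x=d$: your $\Psi_l$ jumps from $Ce^{-\theta_1 d}$ to $C$ there, and $w_l$ itself may jump, since only the sum $v+w_l+w_r$ is continuous (the decomposition pushes all jump conditions onto $w_r$). The standard repair --- and what the cited references actually do --- is to apply the maximum principle on $\Omega^{-}$ and $\Omega^{+}$ \emph{separately}: on each subdomain $\mathcal{L}w_l^{\mp}=0$ with bounded data on the whole parabolic boundary (the interface values are ``chosen suitably,'' hence bounded), so a single-piece exponential barrier suffices on each side and your step (iii) --- which in any case only holds for $\epsilon$ small enough, since it needs $\theta_1 e^{-\theta_1 d}\le\theta_2$ --- becomes unnecessary. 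Finally, your closing sentence, that case (ii) ``follows from the same barriers by the analogous estimate of $g(\theta)$,'' is not tenable: there $\theta_1=\alpha\mu/\epsilon$ and $\theta_2=\gamma/(2\mu)$, and on the adverse side the same computation gives $g(\theta_1)\approx(\alpha^{2}+\alpha|a|)\mu^{2}/\epsilon-b$, which is large and positive precisely when $\mu^{2}>\rho\epsilon/\alpha$; the convection-dominated case requires a genuinely different barrier analysis, not a one-line remark.
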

where
\begin{equation}
\label{theta}
	\theta_{1}=\frac{\alpha\mu}{\epsilon},~~\theta_{2}=\frac{\gamma}{2\mu}.
\end{equation}
\begin{proof}
	The proof follows from the technique given in \cite{ES, ER1}.
\end{proof}
\begin{theorem}
	\label{wwlbd}
	Let  $\sqrt{\alpha}\mu\le\sqrt{\rho \epsilon}$, the singular component $w_{l}(x,t)$ and $w_{r}(x,t)$ satisfies the following bounds 	for $k=0,1,2,3:$\\
	\[\bigg\|\frac{d^{k}v^{-}(x,t)}{dx^{k}}\bigg\|_{\Omega^{-}} \le C\bigg(1+\epsilon^{-\frac{(k-3)}{2}}\bigg),~~
	\bigg\|\frac{d^{k}v^{+}(x,t)}{dx_{k}}\bigg\|_{\Omega^{+}} \le C\bigg(1+\epsilon^{-\frac{(k-3)}{2}}\bigg),\]
 	\[\bigg\|\frac{d^{k}w_{l}^{-}(x,t)}{dx^{k}}\bigg\|_{\Omega^{-}} \le C\epsilon^{-\frac{k}{2}},~~~~
	\bigg\|\frac{d^{k}w_{l}^{+}(x,t)}{dx^{k}}\bigg\|_{\Omega^{+}} \le C\epsilon^{-\frac{k}{2}},\]
	\[\bigg\|\frac{d^{k}w_{r}^{-}(x,t)}{dx^{k}}\bigg\|_{\Omega^{-}} \le C\epsilon^{-\frac{k}{2}},~~~~
	\bigg\|\frac{d^{k}w_{r}^{+}(x,t)}{dx^{k}}\bigg\|_{\Omega^{+}} \le C\epsilon^{-\frac{k}{2}}.\]
	
	For $\sqrt{\alpha}\mu>\sqrt{\rho \epsilon}$, the singular component $w_{l}(x,t)$ and $w_{r}(x,t)$ satisfies the following bounds
	\[\bigg\|\frac{d^{k}v^{-}(x,t)}{dx^{k}}\bigg\|_{\Omega^{-}} \le C\bigg(1+\bigg(\frac{\epsilon}{\mu}\bigg)^{3-k}\bigg),~~
	\bigg\|\frac{d^{k}v^{+}(x,t)}{dx_{k}}\bigg\|_{\Omega^{+}} \le C\bigg(1+\bigg(\frac{\epsilon}{\mu}\bigg)^{3-k}\bigg),\]
 	\[\bigg\|\frac{d^{k}w_{l}^{-}(x,t)}{dx^{k}}\bigg\|_{\Omega^{-}} \le C\bigg(\frac{\mu}{\epsilon}\bigg)^{k},~~~~
	\bigg\|\frac{d^{k}w_{l}^{+}(x,t)}{dx^{k}}\bigg\|_{\Omega^{+}} \le C\mu^{-k},\]
	\[\bigg\|\frac{d^{k}w_{r}^{-}(x,t)}{dx^{k}}\bigg\|_{\Omega^{-}} \le C\mu^{-k},~~~~
	\bigg\|\frac{d^{k}w_{r}^{+}(x,t)}{dx^{k}}\bigg\|_{\Omega^{+}} \le C\bigg(\frac{\mu}{\epsilon}\bigg)^{k}.\]
\end{theorem}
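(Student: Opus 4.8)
The plan is to treat the regular component $v$ and the layer components $w_l,w_r$ by different tools, and in each case to carry out the two regimes $\sqrt{\alpha}\mu\le\sqrt{\rho\epsilon}$ and $\sqrt{\alpha}\mu>\sqrt{\rho\epsilon}$ separately, since the whole scaling is dictated by the roots of the frozen characteristic polynomial $\epsilon\lambda^2+\mu a\lambda-b=0$. In regime (i) both roots are $\mathcal{O}(\epsilon^{-1/2})$, so there is a single layer width $\sqrt{\epsilon}$; in regime (ii) the roots separate into a fast scale $\mathcal{O}(\mu/\epsilon)$ and a slow scale $\mathcal{O}(\mu^{-1})$, which is exactly what produces the two different powers $(\mu/\epsilon)^k$ and $\mu^{-k}$ in the statement. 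Throughout I would lean on the comparison principle (Theorem 2.1) and the stability estimate (Theorem 2.2), and take the zeroth-order layer bounds from Theorem \ref{wlb} as already established.

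For the regular part I would build $v^{\pm}$ as a finite asymptotic expansion plus a remainder. The leading term solves the reduced problem obtained by dropping the $\epsilon\,\partial_{xx}$ term (and, in regime (i), the weak convection term as well), whose data $a,b,c,f$ are smooth on $\Omega^{-}\cup\Omega^{+}$, so its $x$-derivatives are bounded independently of $\epsilon,\mu$ up to the order needed; each successive corrector solves a problem of the same reduced type driven by spatial derivatives of the previous term and therefore contributes a controlled power of the expansion parameter. Applying $\mathcal{L}$ to the remainder and invoking Theorem 2.2 bounds it and its derivatives, and collecting the scales of the individual terms reproduces $\|\partial_x^k v^{\pm}\|\le C(1+\epsilon^{-(k-3)/2})$ in regime (i) and $C(1+(\epsilon/\mu)^{3-k})$ in regime (ii). The latitude in prescribing $v(d-,t),v(d+,t)$ in \eqref{v} is used precisely to keep this construction consistent on the two sides of the interface.

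For the layer components the $k=0$ estimates are furnished by Theorem \ref{wlb}, and I would obtain the derivative bounds by a barrier argument on each differentiated component. Writing $W=\partial_x w_l$, differentiation of $\mathcal{L}w_l=0$ shows that $W$ solves an equation of the same type with a lower-order right-hand side built from $a_x,b_x,c_x$ and from $w_l$ itself; dominating $W$ between $\pm$ an exponential barrier that carries the correct width ($C\epsilon^{-1/2}e^{-\theta_1 x}$ in regime (i), and $C(\mu/\epsilon)e^{-\theta_1 x}$ or $C\mu^{-1}e^{-\theta_2(x-d)}$ in regime (ii)) and applying Theorem 2.1 yields the $k=1$ bound. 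The higher derivatives follow either by iterating this barrier construction on $\partial_x^{k}w_l$ or, in regime (i), by the algebraic shortcut of solving the equation for the top derivative, $\partial_{xx}w_l=\epsilon^{-1}(-\mu a\,\partial_x w_l+b w_l+c\,\partial_t w_l)$; here the constraint $\mu\le\sqrt{\rho\epsilon/\alpha}$ guarantees $\mu\epsilon^{-1/2}=\mathcal{O}(1)$, so each further differentiation of the equation raises the bound by one factor of $\epsilon^{-1/2}$ and gives $\|\partial_x^k w_l^{\pm}\|\le C\epsilon^{-k/2}$. The corresponding time derivatives, which carry no boundary-layer scaling because $t$ is not a layer direction, are estimated the same way and enter only as $\mathcal{O}(1)$ factors.

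The step I expect to be the main obstacle is the first-derivative estimate of the layer functions, together with its interface bookkeeping. Differentiating $\mathcal{L}w=0$ generates lower-order source terms that must be absorbed back into the barrier, which forces a careful choice of barrier constant and exponent so that $\mathcal{L}$ applied to (barrier $\mp$ derivative) keeps a definite sign on all of $\Omega^{-}\cup\Omega^{+}$; equally delicate is verifying the hypothesis $[z_x](d,t)\le0$ of Theorem 2.1 for these derivative barriers, since this is where the coupling of $w_l$ and $w_r$ through the jump relations $[w_r](d,t)=-([v]+[w_l])(d,t)$ and $[(w_r)_x](d,t)=-([v_x]+[(w_l)_x])(d,t)$ enters. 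In regime (ii) there is the added difficulty that a fast layer of width $\epsilon/\mu$ on one side of $x=d$ must be matched to a slow layer of width $\mu$ on the other, so the naive ``solve for the top derivative'' shortcut is too lossy for the slow components $w_l^{+},w_r^{-}$ and each of their derivatives must be controlled by its own slow exponential barrier in order to recover the sharp $\mu^{-k}$; the remaining pieces, namely the expansion of $v$ and the bootstrap for $k=2,3$ in regime (i), are then routine.
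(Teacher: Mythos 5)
Your proposal takes essentially the same route as the paper: the paper's own ``proof'' of this theorem is a single line deferring to the techniques of \cite{ES,ER1}, and what you sketch --- a reduced-problem expansion plus the stability bound for $v^{\pm}$, and exponential barriers with the comparison principle for the layer components, with the roots of $\epsilon\lambda^{2}+\mu a\lambda-b=0$ dictating the scales $\epsilon^{-1/2}$, $\mu/\epsilon$ and $\mu^{-1}$ --- is precisely the content of those cited references. Your outline is in fact more explicit than anything the paper provides, and the difficulties you single out (supplying boundary data for the differentiated layer components at $k=1$, and the fast/slow bookkeeping across $x=d$ when $\sqrt{\alpha}\mu>\sqrt{\rho\epsilon}$) are exactly the points those standard techniques are designed to handle.
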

\begin{proof}
Using the techniques given in \cite{ES, ER1}, the proof follow.
\end{proof}

\section{Difference scheme}
\label{3}
We use the horizontal MOL to discretize the time variable 
using the Crank-Nicolson method \cite{CN1},
with constant step-size $\Delta t$, while keeping the variable $x$ continuous.
 For a fixed time $T$, the interval $[0,T]$ is partitioned uniformly as $\Lambda^{M}=\{t_{j}=j\Delta t:j=0,1,\ldots,M, \Delta t=\frac{T}{M}\}$. The semi-discretization yields the following system of linear ordinary differential equations:
\begin{align*}
&\epsilon U^{j+\frac{1}{2}}_{xx}(x)+\mu a^{j+\frac{1}{2}}(x)U^{j+\frac{1}{2}}_{x}(x)-b^{j+\frac{1}{2}}(x)U^{j+\frac{1}{2}}(x)=f^{j+\frac{1}{2}}(x)+\frac{U^{j+1}(x)-U^{j}(x)}{\Delta t},\\
&x\in  (\Omega^{-}\cup\Omega^{+}),~ 0\le j\le M-1,\\
&U^{j+1}(0)=u(0,t_{j+1}),~~~U^{j+1}(1)=u(1,t_{j+1}),~~0\le j\le M-1,\\
&U^{0}(x)=u(x,0),~~x\in\Gamma,
\end{align*}
where $U^{j+1}(x)$ is the approximation of $u(x,t_{j+1})$ of Eq. \eqref{twoparaparabolic} at $(j+1)$-th time level and $U^{j+\frac{1}{2}}=\frac{U^{j+1}(x)+U^{j}(x)}{2} $.
After simplification, we obtain
\begin{equation}
\label{semi-dis}
\left\{
\begin{array}{ll}
\displaystyle \mathcal{\tilde{L}}U^{j+1}(x)=g(x,t_{j+1}), & \hbox{ $x\in  (\Gamma^{-}\cup\Gamma^{+}),~ 0\le j\le M-1$, }\\
U^{j+1}(0)=u(0,t_{j+1}),U^{j+1}(1)=u(1,t_{j+1}), & \hbox{ $0\le j\le M-1$,}\\
U^{0}(x)=u(x,0), & \hbox{ $x\in\Gamma,$ }
\end{array}
\right.
\end{equation}
where the operator $\mathcal{\tilde{L}}$ is defined as\\
$$\mathcal{\tilde{L}}\equiv \epsilon \frac{\delta^{2}}{\delta x^{2}}+\mu a^{j+\frac{1}{2}}\frac{\delta}{\delta x}-c^{j+\frac{1}{2}}I,$$\\
and $g(x,t_{j+1})=2f^{j+\frac{1}{2}}(x)-\epsilon U^{j}_{xx}(x)-\mu a^{j+\frac{1}{2}}(x)U_{x}^{j}(x)+d^{j+\frac{1}{2}}(x)U^{j}(x),$\\
$c^{j+\frac{1}{2}}(x)=b^{j+\frac{1}{2}}(x)+\frac{2}{\Delta t},$\\
$d^{j+\frac{1}{2}}(x)=b^{j+\frac{1}{2}}(x)-\frac{2}{\Delta t}.$

The error in temporal semi-discretization is defined by 
$e^{j+1}= u(x,t_{j+1})- \hat{U}^{j+1}(x)$ where $u(x,t_{j+1})$ is the solution of  Eq. \eqref{twoparaparabolic}. $ \hat{U}^{j+1}(x)$  is the solution of semi-discrete equation \eqref{semi-dis}, when $u(x,t_{j})$ is taken instead of $U^{j}$ to find solution at $(x,t_{j+1})$.
\begin{theorem}
	The local truncation error $T_{j+1}=\mathcal{\tilde{L}}(e^{j+1})$ satisfies 
	$$\|T_{j+1}\|\le C(\Delta t)^{3},~~~0\le j\le M-1.$$
\end{theorem}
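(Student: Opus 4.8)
The plan is to compute $T_{j+1}$ explicitly from the definitions in \eqref{semi-dis} and to recognise it as twice the Crank--Nicolson consistency residual of the differential equation at the midpoint $t_{j+1/2}$, whose principal part cancels against the equation itself. Since $\hat U^{j+1}$ solves $\tilde{\mathcal L}\hat U^{j+1}=\hat g$, where $\hat g$ is the right-hand side $g(\cdot,t_{j+1})$ with the exact value $u(\cdot,t_{j})$ substituted for $U^{j}$, and $e^{j+1}=u(\cdot,t_{j+1})-\hat U^{j+1}$, linearity gives $T_{j+1}=\tilde{\mathcal L}\,u(\cdot,t_{j+1})-\hat g(\cdot,t_{j+1})$. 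First I would insert $c^{j+1/2}=b^{j+1/2}+\tfrac{2}{\Delta t}$ and $d^{j+1/2}=b^{j+1/2}-\tfrac{2}{\Delta t}$ and collect terms, which yields
\begin{equation*}
T_{j+1}=2\Big[\tfrac{\epsilon}{2}\big(u_{xx}^{j+1}+u_{xx}^{j}\big)+\tfrac{\mu}{2}a^{j+1/2}\big(u_{x}^{j+1}+u_{x}^{j}\big)-\tfrac{1}{2}b^{j+1/2}\big(u^{j+1}+u^{j}\big)-\tfrac{1}{\Delta t}\big(u^{j+1}-u^{j}\big)-f^{j+1/2}\Big],
\end{equation*}
where the superscript denotes evaluation of the exact solution at the indicated time level. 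Thus $T_{j+1}$ is exactly twice the amount by which the exact solution fails to satisfy the semidiscrete relation, and the whole estimate reduces to measuring this residual.

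Next I would Taylor expand in time about the midpoint, writing $t_{j+1}=t_{j+1/2}+\tfrac{\Delta t}{2}$ and $t_{j}=t_{j+1/2}-\tfrac{\Delta t}{2}$ and expanding $u$, $u_{x}$ and $u_{xx}$ about $t_{j+1/2}$ (the coefficients $a,b,f$ are already frozen at $t_{j+1/2}$). By the symmetry of the averages and of the central difference quotient, all odd powers of $\Delta t$ are annihilated, and the zeroth-order terms reassemble into $\big(\epsilon u_{xx}+\mu a u_{x}-bu-u_{t}-f\big)(x,t_{j+1/2})$, which vanishes because $u$ satisfies \eqref{twoparaparabolic} at $(x,t_{j+1/2})$ for $x\in(\Gamma^{-}\cup\Gamma^{+})$. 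What remains is a linear combination of the mixed space--time derivatives $u_{xxtt}$, $u_{xtt}$, $u_{tt}$ and of $u_{ttt}$ evaluated at $t_{j+1/2}$, each multiplied by the factors $\epsilon$, $\mu a^{j+1/2}$, $b^{j+1/2}$ respectively and carried with a power of $\Delta t$; writing the remainders in integral (Lagrange) form makes this $\Delta t$-dependence explicit.

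Finally I would bound each surviving term in the maximum norm, which is where the real work lies. The point is that the relevant derivatives are controlled \emph{uniformly} in $\epsilon$ and $\mu$: the purely temporal derivatives $u_{tt},u_{ttt}$ are $\mathcal O(1)$, while in the parameter-dependent combinations one uses that $\epsilon\,u_{xxtt}$ and $\mu\,u_{xtt}$ remain bounded, which is exactly the weighting supplied by the regular/singular decomposition and the derivative estimates recorded in Section~\ref{2} (Theorem~\ref{wwlbd}), extended to the time direction by differentiating \eqref{twoparaparabolic} in $t$ and using the smoothness and compatibility hypotheses. Assembling these estimates and taking the maximum over $x\in(\Gamma^{-}\cup\Gamma^{+})$ then gives $\|T_{j+1}\|\le C(\Delta t)^{3}$ with $C$ independent of $\epsilon$, $\mu$ and $j$, for all $0\le j\le M-1$. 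The main obstacle is precisely this last step: securing the parameter-uniform bounds on the mixed space--time derivatives so that the factors $\epsilon$ and $\mu$ compensate the growth of the spatial derivatives inside the boundary and interior layers; without this compensation the residual could not be controlled uniformly in the perturbation parameters. I would therefore keep the estimate entirely at the level of the residual $\tilde{\mathcal L}e^{j+1}$ and not pass, at this stage, to any bound on $e^{j+1}$ itself.
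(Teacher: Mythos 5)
Your overall strategy is sound, and in fact the paper gives no proof at all for this theorem beyond citing \cite{KTK}; the computation you set up (identifying $T_{j+1}=\mathcal{\tilde{L}}u(\cdot,t_{j+1})-\hat g$ with twice the Crank--Nicolson midpoint residual, expanding about $t_{j+1/2}$, exploiting the odd-power cancellation, and invoking parameter-uniform derivative bounds) is exactly the cited technique. However, there is a genuine gap at the final step: an order mismatch by one factor of $\Delta t$. Carrying out your own midpoint expansion on your displayed identity gives
\[
T_{j+1}=\frac{(\Delta t)^{2}}{4}\Bigl(\epsilon u_{xxtt}+\mu a^{j+\frac{1}{2}}u_{xtt}-b^{j+\frac{1}{2}}u_{tt}\Bigr)\bigl(x,t_{j+\frac{1}{2}}\bigr)-\frac{(\Delta t)^{2}}{12}\,u_{ttt}\bigl(x,t_{j+\frac{1}{2}}\bigr)+\mathcal{O}\bigl((\Delta t)^{4}\bigr),
\]
so what you have actually proved is $\|T_{j+1}\|\le C(\Delta t)^{2}$, not $C(\Delta t)^{3}$. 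The $(\Delta t)^{3}$ bound in the cited source refers to the one-step relation normalized so that the identity operator has coefficient one, i.e.\ the error equation in the form $\bigl(I+\frac{\Delta t}{2}\mathcal{L}_{x}\bigr)e^{j+1}=T_{j+1}$, where the midpoint residual is multiplied by $\frac{\Delta t}{2}$ --- precisely the missing factor. With the paper's operator $\mathcal{\tilde{L}}$, whose zeroth-order coefficient is $c^{j+\frac{1}{2}}=b^{j+\frac{1}{2}}+\frac{2}{\Delta t}$, the equivalent repair is a stability step: the minimum principle gives $\|z\|\le\frac{\Delta t}{2}\|\mathcal{\tilde{L}}z\|$ (since $c^{j+\frac{1}{2}}\ge\frac{2}{\Delta t}$ and $e^{j+1}$ vanishes at the boundary), whence $\|e^{j+1}\|\le\frac{\Delta t}{2}\|T_{j+1}\|\le C(\Delta t)^{3}$, which is the estimate the subsequent global-error theorem actually consumes ($M\sim(\Delta t)^{-1}$ steps yielding $\mathcal{O}((\Delta t)^{2})$). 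You must either renormalize the definition of $T_{j+1}$ (multiply your identity through by $\frac{\Delta t}{2}$) or insert this stability argument explicitly; as written, your concluding sentence asserts a bound that your own displayed residual contradicts.

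Two further points need attention within the same argument. First, the cancellation of the zeroth-order bracket at $t_{j+1/2}$ requires $c\equiv1$: the PDE is $\epsilon u_{xx}+\mu au_{x}-bu-cu_{t}=f$, while the semidiscretization (following the paper, which silently drops $c$) replaces $cu_{t}$ by the plain difference quotient; for general $c$ your bracket equals $(1-c)u_{t}^{j+1/2}\ne0$, so this hypothesis (satisfied in the paper's numerical examples) should be stated. Second, Theorem \ref{wwlbd} records only \emph{spatial} derivative bounds; the parameter-uniform bounds on $u_{tt}$, $u_{ttt}$ and on the weighted mixed derivatives $\epsilon u_{xxtt}$, $\mu u_{xtt}$ that your final paragraph relies on appear nowhere in the paper and must be derived separately --- differentiate the equation twice in $t$ and use the smoothness and compatibility hypotheses, which also lets you rewrite $\epsilon u_{xxtt}+\mu a u_{xtt}-b u_{tt}$ in terms of $u_{ttt}$, $f_{tt}$ and lower-order layer-free quantities, avoiding any unweighted high spatial derivatives. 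You correctly identify this as the crux of parameter uniformity, but you attribute it to a theorem that does not contain it.
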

\begin{proof}
The proof follows from the technique given in \cite{KTK}.
\end{proof}
\begin{theorem}
	The global  error $E^{j+1}=u(x,t_{j+1})-U^{j+1}(x)$ is estimated as 
	$$\|E^{j+1}\|\le C(\Delta t)^{2},~~~0\le j\le M-1.$$
	Here $U^{j+1}(x)$ is the solution of Eq.\eqref{semi-dis}.
\end{theorem}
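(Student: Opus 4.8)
The plan is to pass from the one-step (local) error, already controlled by Theorem~3.1, to the accumulated global error by a propagation argument, keeping every stability constant independent of $\epsilon,\mu$ and $\Delta t$ through a semi-discrete maximum principle. First I would convert the residual bound of the preceding theorem into a pointwise bound on the local error $e^{j+1}=u(x,t_{j+1})-\hat U^{j+1}(x)$. Since $e^{j+1}$ vanishes on $\Gamma_l\cup\Gamma_r$, is continuous across $x=d$, and satisfies $\tilde{\mathcal L}e^{j+1}=T_{j+1}$ with $\|T_{j+1}\|\le C(\Delta t)^3$, I would invoke the stationary analogue of the comparison principle (the maximum principle stated at the start of Section~2, now for the spatial operator $\tilde{\mathcal L}=\epsilon\frac{\delta^{2}}{\delta x^{2}}+\mu a^{j+\frac12}\frac{\delta}{\delta x}-c^{j+\frac12}I$). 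Because $c^{j+\frac12}=b^{j+\frac12}+2/\Delta t\ge\beta>0$, a constant barrier $\|T_{j+1}\|/\beta$ together with the flux condition $[z_x](d,t)\le 0$ gives $\|e^{j+1}\|\le \beta^{-1}\|T_{j+1}\|\le C(\Delta t)^3$, with $C$ uniform in the parameters.

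Next I would build the recursion between successive global errors. The functions $\hat U^{j+1}$ and $U^{j+1}$ solve the \emph{same} problem \eqref{semi-dis} with identical boundary and interface data, differing only in the previous-level input ($u(x,t_j)$ versus $U^{j}$); subtracting the two and cancelling the common affine part, their difference is $\hat U^{j+1}-U^{j+1}=\mathcal T E^{j}$, where $\mathcal T$ denotes the linear transition operator of one Crank--Nicolson step and $E^{j}=u(x,t_j)-U^{j}$. Splitting $E^{j+1}=(u(x,t_{j+1})-\hat U^{j+1})+(\hat U^{j+1}-U^{j+1})$ then yields $E^{j+1}=e^{j+1}+\mathcal T E^{j}$, and since $E^{0}=0$, unrolling gives
\[ E^{j+1}=\sum_{k=1}^{j+1}\mathcal T^{\,j+1-k}e^{k}. \]

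The decisive step is a uniform maximum-norm bound $\|\mathcal T\|\le 1+C\Delta t$ on the transition operator. Writing the step in the form $(I-\tfrac{\Delta t}{2}\mathcal A)U^{j+1}=(I+\tfrac{\Delta t}{2}\mathcal A)U^{j}+\Phi^{j+1}$ with $\mathcal A=\tilde{\mathcal L}+\tfrac{2}{\Delta t}I$, the operator $I-\tfrac{\Delta t}{2}\mathcal A$ has strictly positive zeroth-order coefficient $1+\tfrac{\Delta t}{2}b^{j+\frac12}>0$ and is therefore inverse-monotone; the same barrier-and-interface comparison as above, applied to $\mathcal T$, bounds it in the sup norm by $1$ up to an $O(\Delta t)$ correction. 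Granting this, the sum has at most $M=T/\Delta t$ terms with $\|\mathcal T\|^{\,j+1-k}\le(1+C\Delta t)^{M}\le e^{CT}$, so
\[ \|E^{j+1}\|\le \sum_{k=1}^{j+1}\|\mathcal T\|^{\,j+1-k}\|e^{k}\|\le e^{CT}\,M\,C(\Delta t)^3=CT(\Delta t)^2, \]
which is exactly the claimed second-order bound; the single factor of $\Delta t$ lost against $M=T/\Delta t$ is what drops the local order three to the global order two.

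The hard part will be the uniform sup-norm stability of $\mathcal T$ in the singularly perturbed, two-parameter regime with the interior interface at $x=d$. A comparison barrier here must simultaneously control the stiff layer behaviour near $x=0,1$ and respect the jump condition $[(\cdot)_x](d,t)\le0$, and it must do so with amplification $1+O(\Delta t)$ rather than a fixed constant: a crude constant barrier would only give $\|\mathcal T\|\le C>1$, whose $M$-fold product $C^{M}$ would destroy the estimate. I would therefore construct the barrier directly from the stationary maximum principle, exploiting the $+2/\Delta t$ in $c^{j+\frac12}$ (equivalently the $+1$ in $1+\tfrac{\Delta t}{2}b^{j+\frac12}$) to obtain a per-step factor $1+O(\Delta t)$, so that $\prod(1+C\Delta t)\le e^{CT}$ stays bounded uniformly in $\epsilon,\mu$ and $\Delta t$.
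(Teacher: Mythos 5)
Your overall skeleton --- converting the residual bound of the previous theorem into $\|e^{j+1}\|\le C(\Delta t)^3$ via the maximum principle, writing $E^{j+1}=e^{j+1}+\mathcal{T}E^{j}$, and accumulating over $M=T/\Delta t$ steps --- is exactly the technique the paper invokes (its proof is a one-line citation to [KTK], where the global error is obtained by summing local errors in precisely this way). The local-error step is sound: since $c^{j+\frac12}=b^{j+\frac12}+2/\Delta t$, a constant barrier works, and in fact gives the sharper bound $\|e^{j+1}\|\le \tfrac{\Delta t}{2}\|T_{j+1}\|$.

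The genuine gap is your ``decisive step'': the claim that $\|\mathcal{T}\|_\infty\le 1+C\Delta t$ can be proved from inverse-monotonicity of $I-\tfrac{\Delta t}{2}\mathcal{A}$ plus a barrier. That route cannot work, because the maximum principle controls only the implicit half of the operator; the explicit half $I+\tfrac{\Delta t}{2}\mathcal{A}$ is unbounded on $\bigl(C^0,\|\cdot\|_\infty\bigr)$ and its contribution is not an $O(\Delta t)$ perturbation. Concretely, writing $\mathcal{A}+\tfrac{2}{\Delta t}=\bigl(\mathcal{A}-\tfrac{2}{\Delta t}\bigr)+\tfrac{4}{\Delta t}$ gives $\mathcal{T}=-I+\tfrac{4}{\Delta t}\bigl(\tfrac{2}{\Delta t}-\mathcal{A}\bigr)^{-1}$, and the barrier argument yields $\|\bigl(\tfrac{2}{\Delta t}-\mathcal{A}\bigr)^{-1}\|_\infty\le\tfrac{\Delta t}{2}$, hence only $\|\mathcal{T}\|_\infty\le 1+\tfrac{4}{\Delta t}\cdot\tfrac{\Delta t}{2}=3$. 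This constant is essentially sharp: take $E\equiv 1$ except for a dip to $-1$ on an interval of width $o(\sqrt{\epsilon\Delta t})$ around an interior point $x_0$; then the dip contributes negligibly to the Green's-function integral, so $(\mathcal{T}E)(x_0)=3-o(1)$. Thus in the maximum norm the Crank--Nicolson transition operator is genuinely not a $(1+O(\Delta t))$-quasi-contraction, and no refinement of the barrier can make it one --- which, by your own observation about $C^{M}$ blowing up, means the accumulation step collapses. (This is exactly where Crank--Nicolson differs from backward Euler, for which $\mathcal{T}_{BE}=(I-\Delta t\,\mathcal{A})^{-1}$ and the maximum principle alone gives norm $\le(1+\beta\Delta t)^{-1}<1$.) To close the argument you must use that the propagated errors $E^{j}$ are not arbitrary continuous functions: for instance, use the differential equations satisfied by $u(\cdot,t_j)$ and $U^{j}$ to re-express $\mathcal{A}E^{j}$ through lower-order terms and previous residuals so that no unbounded operator ever acts on $E^{j}$, or pass to an $L^2$/spectral argument, or invoke the stability theory of A-acceptable rational approximations of analytic semigroups. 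The paper (following [KTK]) silently assumes non-amplification by writing the global error as a plain sum of local errors; your write-up makes the needed stability explicit, which is to your credit, but the mechanism you propose for it cannot deliver the bound.
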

\begin{proof}
	The proof follows from the technique given in \cite{KTK}.
\end{proof}
We will now define the fully discretized scheme. In spatial direction, we use a non-uniform mesh that is graded in the layer region and uniform in the outer region.
The semi-discrete problem in \eqref{semi-dis} is discretized using the upwind finite difference method on an appropriately defined Shishkin-Bakhvalov mesh on space.  Let the interior points of the spatial mesh are denoted by $\Gamma^{N}=\{x_{i}:1\le i \le \frac{N}{2}-1\}\cup\{x_{i}:\frac{N}{2}+1\le i \le N-1\}.$
The $\bar{\Gamma}^{N}=\{x_{i}\}_{0}^{N}\cup\{d\}$ denote the mesh points with $x_{0}=0,x_{N}=1$ and the point of discontinuity at point $x_{\frac{N}{2}}=d.$ We also introduce the notation $\Gamma^{N-}=\{x_{i}\}_{0}^{\frac{N}{2}-1}, \Gamma^{N+}=\{x_{i}\}^{N-1}_{\frac{N}{2}+1}$, $\Omega^{N-}=\Gamma^{N-}\times\Lambda^{M}$, $\Omega^{N+}=\Gamma^{N+}\times\Lambda^{M}$and $\bar{\Omega}^{N,M}=\bar{\Gamma}^{N}\times\Lambda^{M}$.
The domain $[0,1]$ is subdivided into six sub-intervals as
\[\bar{\Gamma}=[0,\tau_{1}]\cup[\tau_{1},d-\tau_{2}]\cup[d-\tau_{2},d]\cup[d,d+\tau_{3}]\cup[d+\tau_{3},1-\tau_{4}]\cup[1-\tau_{4},1].\]
The transition points are defined as done for Shishkin mesh:
$$\tau_{1}=\frac{4}{\theta_{1}}\ln N,~~~\tau_{2}=\frac{4}{\theta_{2}}\ln N,$$ 
$$\tau_{3}=\frac{4}{\theta_{2}}\ln N,~~~\tau_{4}=\frac{4}{\theta_{1}}\ln N.$$
where $\theta_1$ and $\theta_2$ are the same as in \eqref{theta}.

The interval $[0,\tau_{1}]$ is subdivided into $\frac{N}{8}$ sub-intervals by inverting the function  $e^{-\theta_{1}x}$ linearly in it, so for $i=0,\ldots,\frac{N}{8}$,
$$e^{-\theta_{1}x_{i}/8}=Ai+B$$ 
such that $x_{0}=0, x_{\frac{N}{8}}=\tau_{1}$. Thus, we obtain 
$$x_{i}=-\frac{8}{\theta_{1}}\log\bigg(1+\frac{8i}{N}\bigg(\frac{1}{\sqrt{N}}-1\bigg)\bigg),~~0\le i \le \frac{N}{8}.$$

In interval $[d-\tau_{2},d]$, we invert the function  $e^{-\theta_{2}(x-d)}$ linearly to obtain $\frac{N}{8}+1$ mesh points. Thus, we obtain 
$$x_{i}=d+\frac{8}{\theta_{2}}\log\bigg(\frac{8i}{N}\bigg(1-\frac{1}{\sqrt{N}}\bigg)+\frac{4}{\sqrt{N}}-3\bigg),~~\frac{3N}{8}\le i \le \frac{N}{2}.$$
where $x_{\frac{3N}{8}}=d-\tau_{2}$ and $x_{\frac{N}{2}}=d.$
Similarly by inverting the function $e^{-\theta_{2}(d-x)}$ linearly in the interval  $[d,d+\tau_{3}]$, we obtain the following  $\frac{N}{8}+1$ mesh points for $\frac{N}{2}\le i \le \frac{5N}{8}$:
$$x_{i}=d-\frac{8}{\theta_{2}}\log\bigg(\frac{8i}{N}\bigg(\frac{1}{\sqrt{N}}-1\bigg)+5-\frac{4}{\sqrt{N}}\bigg),~~\frac{N}{2}\le i \le \frac{5N}{8}$$
where $x_{\frac{N}{2}}=d, x_{\frac{5N}{8}}=d+\tau_{3}$.
Also, the interval $[1-\tau_{4},1]$ can be subdivided into $\frac{N}{8}$ sub-intervals by inverting the function  $e^{-\theta_{1}(1-x)}$ linearly in it, 
so, we have 
$$x_{i}=1+\frac{8}{\theta_{1}}\log\bigg(\frac{8i}{N}\bigg(1-\frac{1}{\sqrt{N}}\bigg)+\frac{8}{\sqrt{N}}-7\bigg),~~\frac{7N}{8}\le i \le N.$$

A uniform mesh consists of $\big(\frac{N}{4}+1\big)$ mesh points is employed between intervals $[\tau_{1},d-\tau_{2}]$ and $[d+\tau_{3},1-\tau_{4}]$.

The mesh points in space are given by
$$x_{i}=\left\{
\begin{array}{ll}
\vspace{0.2cm}
\displaystyle -\frac{8}{\theta_{1}}\log\bigg(1+\frac{8i}{N}\bigg(\frac{1}{\sqrt{N}}-1\bigg)\bigg), & \hbox{ $0\le i \le \frac{N}{8},$ }\\
\vspace{0.2cm}
\displaystyle\tau_{1}+\frac{(d-\tau_{1}-\tau_{2})\bigg(\frac{i}{N}-\frac{1}{8}\bigg)}{\frac{1}{4}}, & \hbox{ $\frac{N}{8}\le i \le \frac{3N}{8},$ }\\
\vspace{0.2cm}
\displaystyle d+\frac{8}{\theta_{2}}\log\bigg(\frac{8i}{N}\bigg(1-\frac{1}{\sqrt{N}}\bigg)+\frac{4}{\sqrt{N}}-3\bigg), & \hbox{ $\frac{3N}{8}\le i \le \frac{N}{2},$ }\\
\vspace{0.2cm}
\displaystyle d-\frac{8}{\theta_{2}}\log\bigg(\frac{8i}{N}\bigg(\frac{1}{\sqrt{N}}-1\bigg)+5-\frac{4}{\sqrt{N}}\bigg), & \hbox{ $\frac{N}{2}\le i \le \frac{5N}{8},$ }\\
\vspace{0.2cm}
\displaystyle d+\tau_{3}+\frac{(1-d-\tau_{3}-\tau_{4})\bigg(\frac{i}{N}-\frac{5}{8}\bigg)}{\frac{1}{4}}, & \hbox{ $\frac{5N}{8}\le i \le \frac{7N}{8},$ }\\
\displaystyle1+\frac{8}{\theta_{1}}\log\bigg(\frac{8i}{N}\bigg(1-\frac{1}{\sqrt{N}}\bigg)+\frac{8}{\sqrt{N}}-7\bigg), & \hbox{ $\frac{7N}{8}\le i \le N .$ }
\end{array}
\right.$$

In terms of the mesh generating function $\phi$, which maps a uniform mesh $\xi$ onto a layer adapted mesh in $x$ by $x=\phi(\xi)$. The mesh can be written as:
\[	x_i=\phi(\xi_i)=\left\{
\begin{array}{ll}
\displaystyle \frac{8}{\theta_{1}}\phi_{1}(\xi_i), & \hbox{ $ 0\le i \le \frac{N}{8},$ }\\
\displaystyle \tau_{1}+\frac{(d-\tau_{1}-\tau_{2})(\xi_i-\frac{1}{8})}{\frac{1}{4}}, & \hbox{ $\frac{N}{8}\le i \le \frac{3N}{8},$ }\\
\vspace{0.3cm}
\displaystyle d-\frac{8}{\theta_{2}}\phi_{2}(\xi_i), & \hbox{ $\frac{3N}{8}\le i \le \frac{N}{2},$ }\\
\displaystyle d+\frac{8}{\theta_{2}}\phi_{3}(\xi_i), & \hbox{ $\frac{N}{2}\le i \le \frac{5N}{8},$ }\\
\displaystyle d+\tau_{3}+\frac{(1-d-\tau_{3}-\tau_{4})(\xi_i-\frac{5}{8})}{\frac{1}{4}}, & \hbox{ $\frac{5N}{8}\le i \le \frac{7N}{8},$ }\\
\displaystyle1-\frac{8}{\theta_{1}}\phi_{4}(\xi_i), & \hbox{ $\frac{7N}{8}\le i \le N, $ }
\end{array}
\right.\]
with $\displaystyle \xi_{i}=\frac{i}{N}$. The functions $\phi_{1}$ and $ \phi_{3}$ are monotonically increasing on $[0, \frac{1}{8}]$ and $[\frac{1}{2}, \frac{5}{8}]$ respectively. The functions $\phi_{2}$ and $\phi_{4}$ are monotonically decreasing on $[\frac{3}{8}, \frac{1}{2}]$ and $[\frac{7}{8}, 1]$ respectively. 
\begin{Assumption}
	\label{assump1}
	Here we assume for $\sqrt{\alpha}\mu \le \sqrt{\rho\epsilon}$, $\sqrt{\epsilon}<N^{-1}$ to resolve the layers properly.
\end{Assumption}
\begin{lemma}
	\label{assump}
	Here, the mesh-generating functions $\phi_{1}, \phi_{2}, \phi_{3}$ and $\phi_{4}$ are piecewise differentiable and satisfy the following conditions:
	$$\max\limits_{\xi\in [0,\frac{1}{8}]}|\phi_{1}^{'}(\xi)|\le CN,~~~~\max\limits_{\xi\in [\frac{3}{8},\frac{1}{2}]}|\phi_{2}^{'}(\xi)|\le CN,$$
	$$\max\limits_{\xi\in [\frac{1}{2},\frac{5}{8}]}|\phi_{3}^{'}(\xi)|\le CN,~~~~\max\limits_{\xi\in [\frac{7}{8},1]}|\phi_{4}^{'}(\xi)|\le CN$$ 
	and
	$$\int_{0}^{\frac{1}{8}}\{\phi_{1}^{'}(\xi)\}^{2} d\xi\le CN,~~~~~\int_{\frac{3}{8}}^{\frac{1}{2}}\{\phi_{2}^{'}(\xi)\}^{2}d\xi\le CN,$$
	$$\int_{\frac{1}{2}}^{\frac{5}{8}}\{\phi_{3}^{'}(\xi)\}^{2}d\xi\le CN,~~~~~ \int_{\frac{7}{8}}^{1}\{\phi_{4}^{'}(\xi)\}^{2}d\xi\le CN.$$ 
\end{lemma}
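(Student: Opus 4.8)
The plan is to extract the closed forms of $\phi_1,\phi_2,\phi_3,\phi_4$ by matching the two descriptions of the mesh, namely the explicit formula for $x_i$ and the mesh-generating representation $x_i=\phi(\xi_i)$ with $\xi_i=i/N$. For instance, comparing $x_i=-\tfrac{8}{\theta_1}\log\!\big(1+\tfrac{8i}{N}(\tfrac{1}{\sqrt N}-1)\big)$ with $x_i=\tfrac{8}{\theta_1}\phi_1(\xi_i)$ on $0\le i\le N/8$ gives $\phi_1(\xi)=-\log L_1(\xi)$, where $L_1(\xi)=1+8\xi(\tfrac{1}{\sqrt N}-1)$. Carrying out the analogous comparison on the remaining three blocks yields $\phi_k(\xi)=-\log L_k(\xi)$ for $k=2,3,4$, with each $L_k$ affine in $\xi$. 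The key structural fact, which I would confirm by evaluating $L_k$ at the endpoints of each block, is that every $L_k$ is positive and runs monotonically between $1$, attained at the layer centre (one of $x=0,d,1$), and $1/\sqrt N$, attained at the corresponding transition point, with constant slope $L_k'=\pm 8(1-\tfrac{1}{\sqrt N})$. This uniformity means it is enough to treat $\phi_1$ in full detail and then invoke symmetry for $\phi_2,\phi_3,\phi_4$.

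For the pointwise (maximum) bounds I would differentiate to obtain $\phi_k'(\xi)=-L_k'(\xi)/L_k(\xi)$, so that $|\phi_k'(\xi)|=8(1-\tfrac{1}{\sqrt N})/L_k(\xi)$. Because $L_k$ is positive and monotone on its block, $|\phi_k'|$ is monotone too and attains its maximum where $L_k$ is smallest, that is, where $L_k=1/\sqrt N$. This yields $\max|\phi_k'|=8\sqrt N\,(1-\tfrac{1}{\sqrt N})=8(\sqrt N-1)\le 8\sqrt N\le CN$, using $\sqrt N\le N$ for $N\ge 1$, which is the first group of inequalities (in fact the sharper bound $C\sqrt N$ holds).

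For the integral bounds I would use the substitution $u=L_k(\xi)$, $du=L_k'\,d\xi$, under which $\int\{\phi_k'\}^2\,d\xi=\int (L_k')^2/L_k^2\,d\xi$ collapses to $L_k'\int u^{-2}\,du$ taken between the endpoint values $1$ and $1/\sqrt N$. Evaluating $\int u^{-2}\,du=[-u^{-1}]$ and simplifying the resulting product gives $8(\sqrt N+\tfrac{1}{\sqrt N}-2)\le 8\sqrt N\le CN$, which is the second group of inequalities. The orientation of the limits of integration is compensated by the sign of $L_k'$, so the final value comes out positive, as it must.

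I do not expect a genuine analytic obstacle here: once the $L_k$ are identified the estimates are elementary. The only real care needed is sign bookkeeping. In particular, $\phi_2$ and $\phi_4$ enter the mesh with a leading minus sign ($x=d-\tfrac{8}{\theta_2}\phi_2$ and $x=1-\tfrac{8}{\theta_1}\phi_4$) and have $L_k'<0$, so I would double-check the signs both when reading off $\phi_k$ and when orienting the substitution, to ensure the bounds stay positive. Piecewise differentiability is immediate, since each $\phi_k$ is the composition of $-\log$ with a strictly positive affine function and is thus $C^{\infty}$ on its block; the word ``piecewise'' refers only to the six-block definition of the global generating function $\phi$.
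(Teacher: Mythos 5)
Your proposal is correct and follows essentially the same route as the paper: identify each $\phi_k$ as $-\log$ of a positive affine function $L_k$ running monotonically between $1$ (at the layer centre) and $1/\sqrt{N}$ (at the transition point), so that $|\phi_k'|=|L_k'|/L_k\le 8\left(1-\tfrac{1}{\sqrt{N}}\right)\sqrt{N}\le 8\sqrt{N}\le CN$; the paper then bounds each integral crudely by $(8\sqrt{N})^2$ times the interval length, whereas you evaluate it exactly by substitution to get the sharper $8\left(\sqrt{N}+\tfrac{1}{\sqrt{N}}-2\right)$ — both trivially within the claimed $CN$. One harmless slip in your sign bookkeeping: it is $L_1$ and $L_3$ (not $L_2$ and $L_4$) that have negative slope — e.g.\ $L_2(\xi)=8\xi\left(1-\tfrac{1}{\sqrt{N}}\right)+\tfrac{4}{\sqrt{N}}-3$ increases from $1/\sqrt{N}$ at $\xi=3/8$ to $1$ at $\xi=1/2$ — but since your estimates use only $|L_k'|$ and $\min L_k$, nothing in the argument changes.
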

\begin{proof}
  The mesh-generating functions $\phi_{1}(\xi)=-\log\bigg[1-8\xi\bigg(\frac{1}{\sqrt{N}}-1\bigg)\bigg],~~\xi\in [0,\frac{1}{8}].$\\
  Therefore, 
  $$\lvert \phi_{1}'(\xi)\rvert\le\frac{8\sqrt{N}}{\sqrt{N}+(1-\sqrt{N})}\le8\sqrt{N}\le CN.$$
  Similarly, we can prove the bounds for $\phi_{2}', \phi_{3}'$ and $\phi_{4}'$ in the intervals $[\frac{3}{8},\frac{1}{2}], [\frac{1}{2},\frac{7}{8}]$ and $[\frac{7}{8},1]$ respectively.
Also $$\int_{0}^{\frac{1}{8}}\{\phi_{1}^{'}(\xi)\}^{2} d\xi\le \int_{0}^{\frac{1}{8}}(8\sqrt{N})^{2} d\xi\le CN.$$
For other bounds, we can follow a similar procedure.
\end{proof}
Using the Lemma (\ref{assump}) and Assumption (\ref{assump1}) we see that for $1\le i \le \frac{N}{8}$, 
$$h_{i}=x_{i}-x_{i-1}=\frac{8}{\theta_{1}}\{\phi_{1}(\xi_{i})-\phi_{1}(\xi_{i-1})\}\le\frac{8}{\theta_{1}}(\xi_{i}-\xi_{i-1})\max\limits_{\xi\in [0,\frac{1}{8}]}|\phi_{1}^{'}(\xi)|\le \frac{C}{\theta_{1}}\le CN^{-1}.$$
Similarly, in the interval $\frac{3N}{8}\le i \le \frac{N}{2},\frac{N}{2}\le i \le \frac{5N}{8}$ and $\frac{7N}{8}\le i \le N$ we can bounds $h_{i}$ by using different $\phi_{i}'s, i=2,3,4$ to obtain that
\[h_{i}\le CN^{-1},~~\forall~x_{i}\in [d-\tau_{2},d]\cup[d,d+\tau_{3}]\cup[1-\tau_{4},1].\]

The fully discretized scheme is given by : 
Find $U^{j+1}(x_{i})=U(x_{i},t_{j+1})$ such that
\begin{equation}
\label{Full-dis}
\left\{
\begin{array}{ll}
\displaystyle \mathcal{L}^{N}U^{j+1}(x_{i})=\tilde{g}(x_{i},t_{j+1}), & \hbox{ $x_{i}\in \Gamma^{N-}\cup\Gamma^{N+},0\le j \le M-1,$ }\\
U^{j+1}(0)=u(0,t_{j+1}),U^{j+1}(1)=u(1,t_{j+1}), & \hbox{ $0\le j\le M-1$,}\\
U^{0}(x_{i})=u(x_{i},0), & \hbox{ $i=0,\ldots,N,$ }
\end{array}
\right.
\end{equation}
where the operator $\mathcal{L}^{N}$ is defined as\\
$$\mathcal{L}^{N}\equiv (\epsilon \delta_{x}^{2}+\mu a^{j+\frac{1}{2}}D_{x}^{*}-c^{j+\frac{1}{2}}I)$$
and $\tilde{g}(x_{i},t_{j+1})=2f^{j+\frac{1}{2}}(x_{i})-\epsilon \delta_{x}^{2}U^{j}(x_{i})-\mu a^{j+\frac{1}{2}}(x_{i})D_{x}^{*}U^{j}(x_{i})+d^{j+\frac{1}{2}}(x)U^{j}(x_{i}).$\\
At the point of discontinuity, we have used a three-point formula 
$$D_{x}^{+}U^{j+1}(x_{\frac{N}{2}})=D_{x}^{-}U^{j+1}(x_{\frac{N}{2}}),~\forall~0\le j\le M-1,$$
where $$D^{+}U^{j+1}(x_{i})=\frac{U^{j+1}(x_{i+1})-U^{j+1}(x_{i})}{x_{i+1}-x_{i}},~~~~~~ D^{-}U^{j+1}(x_{i})=\frac{U^{j+1}(x_{i})-U^{j+1}(x_{i-1})}{x_{i}-x_{i-1}}$$
$$D^{*}U^{j+1}(x_{i})=\left\{
\begin{array}{ll}
\displaystyle D^{-}U^{j+1}(x_{i}), & \hbox{ $i<\frac{N}{2}$ }\\
D^{+}U^{j+1}(x_{i}), & \hbox{ $i>\frac{N}{2}$,}
\end{array}
\right. ~~~\delta^{2}U^{j+1}(x_{i})=\frac{2(D^{+}U^{j+1}(x_{i})-D^{-}U^{j+1}(x_{i}))}{x_{i+1}-x_{i-1}}.$$
The matrix associated with the above discrete scheme (\ref{Full-dis})  is monotone and irreducibly diagonally dominant. It is an $M$-matrix and hence invertible.


\section{Convergence and Stability Analysis}
\label{4}
\begin{theorem}
	Suppose that a mesh function $Y(x_{i},t_{j})$ satisfies $Y(x_{0},t_{j})\ge 0, Y(x_{N},t_{j})\ge0$ and $D_{x}^{+}Y(x_{\frac{N}{2}},t_{j})-D_{x}^{-}Y(x_{\frac{N}{2}},t_{j})\le0 ~\forall~j=0,\ldots,M$. If $\mathcal{L}^{N}Y(x_{i},t_{j})\le0$ for all $(x_{i},t_{j})\in \Omega^{N-}\cup\Omega^{N+}$ then $Y(x_{i},t_{j})\ge0,~\forall  (x_{i},t_{j})\in\bar{\Omega}^{N,M}$.
\end{theorem}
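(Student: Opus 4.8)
The plan is to argue by contradiction using the standard discrete comparison (maximum) principle, exploiting the fact that $\mathcal{L}^N$ is effectively local in time: the coupling to the previous time level has been absorbed into the right-hand side $\tilde g$, so $\mathcal{L}^N Y(x_i,t_j)$ involves only the spatial stencil of $Y$ at the single level $t_j$, with strictly positive reaction coefficient $c^{j+\frac12}=b^{j+\frac12}+2/\Delta t$. Consequently it suffices to run the argument one time level at a time, and I will work at a fixed level where the minimum is attained. Suppose $m:=\min_{(x_i,t_j)\in\bar\Omega^{N,M}}Y(x_i,t_j)<0$, attained at $(x_k,t_l)$. Since $Y(x_0,t_j)\ge0$ and $Y(x_N,t_j)\ge0$, the minimiser is not $x_0$ or $x_N$, so $x_k$ is an interior spatial node: either a regular node $x_k\neq d$ or the discontinuity node $x_{\frac{N}{2}}=d$.

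First I would rule out the regular interior nodes. At such a minimiser, $D^+Y(x_k,t_l)\ge0$ and $D^-Y(x_k,t_l)\le0$, so $\delta_x^2 Y(x_k,t_l)\ge0$ and hence $\epsilon\,\delta_x^2 Y\ge0$. The upwind choice of $D_x^*$ is precisely what makes the convection term non-negative: for $x_k\in\Omega^-$ one uses $D_x^*=D^-$ together with $a\le-\alpha_1<0$, giving $\mu a\,D^-Y\ge0$, while for $x_k\in\Omega^+$ one uses $D_x^*=D^+$ together with $a\ge\alpha_2>0$, again giving $\mu a\,D^+Y\ge0$. Because $c^{j+\frac12}>0$ and $Y(x_k,t_l)=m<0$, the reaction term contributes $-c^{j+\frac12}Y>0$ \emph{strictly}. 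Summing the three contributions gives $\mathcal{L}^N Y(x_k,t_l)>0$, contradicting the hypothesis $\mathcal{L}^N Y\le0$. Hence the minimum can be attained at no regular interior node.

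It then remains to treat the discontinuity node, which is where the three-point condition becomes essential and where I expect the only real obstacle to lie. Having excluded every regular interior node, both immediate neighbours $x_{\frac{N}{2}-1}$ and $x_{\frac{N}{2}+1}$ must carry values strictly larger than $m$. This yields $D^-Y(x_{\frac{N}{2}},t_l)<0$ and $D^+Y(x_{\frac{N}{2}},t_l)>0$, both strict, and therefore $D_x^+Y(x_{\frac{N}{2}},t_l)-D_x^-Y(x_{\frac{N}{2}},t_l)>0$, in direct contradiction with the assumed flux-jump inequality $D_x^+Y(x_{\frac{N}{2}},t_j)-D_x^-Y(x_{\frac{N}{2}},t_j)\le0$. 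Thus no minimiser with $m<0$ exists, so $m\ge0$ and $Y(x_i,t_j)\ge0$ throughout $\bar\Omega^{N,M}$.

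The delicate point throughout is the handling of ties at the minimum. The strict positivity furnished by the reaction term eliminates ties at the regular nodes outright (the minimum simply cannot sit there), and it is exactly this that frees the neighbours of $d$ to have strictly larger values, allowing the flux-jump inequality to close the argument at the interface. I would emphasise that the same signed-difference bookkeeping is what underlies the stated $M$-matrix property of \eqref{Full-dis}, so the proof is really a direct verification of monotonicity rather than an appeal to that property.
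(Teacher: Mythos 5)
Your proof is correct and complete: the level-by-level exclusion of a negative minimum at regular interior spatial nodes via the sign structure of the upwind operator (with the strict contribution $-c^{j+\frac12}m>0$ breaking ties), followed by the strict flux-jump contradiction at $x_{\frac{N}{2}}=d$, is exactly the standard argument for this discrete minimum principle. The paper itself supplies no argument here --- it simply defers to \cite{DP} --- and your proposal is essentially the proof given in that reference, so nothing is missing and no different route is taken.
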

\begin{proof}
See \cite{DP} for proof.
\end{proof}
To find the error estimates for the scheme \eqref{Full-dis} defined  above, we first decompose the discrete solution $U^{j+1}(x_{i})$ into the regular and singular components. \\
Let
\[U^{j+1}(x_{i})=V^{j+1}(x_{i})+W_{l}^{j+1}(x_{i})+W_{r}^{j+1}(x_{i}).\]
The regular components is
\begin{equation}
\begin{aligned}
\label{reg}
V^{j+1}(x_{i})=\left\{
\begin{array}{ll}
\displaystyle V^{-(j+1)}(x_{i}), & \hbox{ $(x_{i},t_{j})\in \Omega^{N-},$ }\\
V^{+(j+1)}(x_{i}),& \hbox{ $(x_{i},t_{j})\in \Omega^{N+},$ }
\end{array}
\right.
\end{aligned}
\end{equation}
where $V^{-(j+1)}(x_{i})$ and $V^{+(j+1)}(x_{i})$ approximate $v^{-}(x_{i},t_{j+1})$ and $v^{+}(x_{i},t_{j+1})$ respectively. They satisfy the following equations:
\begin{equation}
\begin{aligned}
\label{LV-}
&\mathcal{L}^{N}V^{-(j+1)}(x_{i})=\tilde{g}(x_{i},t_{j+1}),~\forall~ (x_{i},t_{j+1})\in\Omega^{N-},\\
&V^{-(j+1)}(x_{0})=v^{-}(0,t_{j+1}),~ V^{-(j+1)}(x_{\frac{N}{2}})=v^{-}(d-,t_{j+1}),~\forall ~j=0,1,\ldots,M-1.
\end{aligned}
\end{equation}
\begin{equation}
\begin{aligned}
\label{LV+}
&\mathcal{L}^{N}V^{+(j+1)}(x_{i})=g(x_{i},t_{j+1}),~\forall~ (x_{i},t_{j+1})\in\Omega^{N+},\\
&V^{+(j+1)}(x_{\frac{N}{2}})=v^{+}(d+,t_{j+1}), V^{+(j+1)}(x_{N})=v^{-}(1,t_{j+1}),~\forall ~j=0,1,\ldots,M-1.
\end{aligned}
\end{equation}
The singular components $W_{l}^{j+1}(x_{i})$ and $W_{r}^{j+1}(x_{i})$ are also decomposed as:
$$W_{l}^{j+1}(x_{i})=\left\{
\begin{array}{ll}
\displaystyle W_{l}^{-(j+1)}(x_{i}), & \hbox{ $(x_{i},t_{j})\in \Omega^{N-},$ }\\
W_{l}^{+(j+1)}(x_{i}),& \hbox{ $(x_{i},t_{j})\in \Omega^{N+},$ }
\end{array}
\right.
W_{r}^{j+1}(x_{i})=\left\{
\begin{array}{ll}
\displaystyle W_{r}^{-(j+1)}(x_{i}), & \hbox{$(x_{i},t_{j})\in \Omega^{N-},$ }\\
W_{r}^{+(j+1)}(x_{i}),& \hbox{$(x_{i},t_{j})\in \Omega^{N+},$ }
\end{array}
\right.$$
where $W_{l}^{-(j+1)}(x_{i}), W_{l}^{+(j+1)}(x_{i})$ approximates the left layer components $w_{l}^{-}(x_{i},t_{j+1})$ and $w_{l}^{+}(x_{i},t_{j+1})$ and $W_{r}^{-(j+1)}(x_{i}), W_{r}^{+(j+1)}(x_{i})$ approximates the right layer components $w_{r}^{-}(x_{i},t_{j+1})$ and $w_{r}^{+}(x_{i},t_{j+1})$ respectively.   
These components satisfies  the following equations
\begin{equation}
\begin{aligned}
\label{LW-}
&\mathcal{L}^{N}W_{l}^{-(j+1)}(x_{i})=0,~\forall~ (x_{i},t_{j+1})\in\Omega^{N-},\\
&W_{l}^{-(j+1)}(x_{0})=w_{l}^{-}(0,t_{j+1}), W_{l}^{-(j+1)}(x_{\frac{N}{2}})=w_{l}^{-}(d,t_{j+1}),~\forall ~j=0,1,\ldots,M-1.
\end{aligned}
\end{equation}
\begin{equation}
\begin{aligned}
\label{LW+}
&\mathcal{L}^{N}W_{l}^{+(j+1)}(x_{i})=0,~\forall~ (x_{i},t_{j+1})\in\Omega^{N+},\\
&W_{l}^{+(j+1)}(x_{\frac{N}{2}})=w_{l}^{+}(d,t_{j+1}),~ W_{l}^{+(j+1)}(x_{N})=0,~\forall ~j=0,1,\ldots,M-1.
\end{aligned}
\end{equation}
\begin{equation}
\begin{aligned}
\label{RW-}
&\mathcal{L}^{N}W_{r}^{-(j+1)}(x_{i})=0,~\forall~ (x_{i},t_{j+1})\in\Omega^{N-},\\
&W_{r}^{-(j+1)}(x_{0})=0,~ W_{r}^{-(j+1)}(x_{\frac{N}{2}})=w_{r}^{-}(d,t_{j+1}), ~\forall ~j=0,1,\ldots,M-1.
\end{aligned}
\end{equation}
\begin{equation}
\begin{aligned}
\label{RW+}
&\mathcal{L}^{N}W_{r}^{+(j+1)}(x_{i})=0,~\forall~ (x_{i},t_{j+1})\in\Omega^{N+},\\
&W_{r}^{+(j+1)}(x_{\frac{N}{2}})=0,~ W_{r}^{+(j+1)}(x_{N})=w_{r}(1,t_{j+1}),~\forall ~j=0,1,\ldots,M-1.
\end{aligned}
\end{equation}
Hence, the discrete solution $U^{j+1}(x_{i})$ is defined as
$$U^{j+1}(x_{i})=\left\{
\begin{array}{ll}
\vspace{0.3cm}
\displaystyle (V^{-(j+1)}+W_{l}^{-(j+1)}+W_{r}^{-(j+1)})(x_{i}),\hspace{5cm} (x_{i},t_{j+1})\in\Omega^{N-},\\

(V^{-(j+1)}+W_{l}^{-(j+1)}+W_{r}^{-(j+1)})(x_{\frac{N}{2}})\\
\vspace{0.3cm}
=(V^{+(j+1)}+W_{l}^{+(j+1)}+W_{r}^{+(j+1)})(x_{\frac{N}{2}}),\hspace{4.4cm}(x_{i},t_{j+1})=(d,t_{j+1}),\\

(V^{+(j+1)}+W_{l}^{+(j+1)}+W_{r}^{+(j+1)})(x_{i}),\hspace{5cm}(x_{i},t_{j+1})\in\Omega^{N+}.
\end{array}
\right.$$
\begin{theorem}
	Let $\sqrt{\alpha}\mu\le\sqrt{\gamma\epsilon}$, the singular component $W_{l}^{-(j+1)}(x_{i}), W_{l}^{+(j+1)}(x_{i})$, $W_{r}^{-(j+1)}(x_{i})$ and $ W_{r}^{+(j+1)}(x_{i})$ satisfy the following bounds
	$$|W_{l}^{-(j+1)}(x_{i})|\le C\gamma_{l,i}^{-(j+1)},\quad\gamma_{l,i}^{-(j+1)}=\prod_{n=1}^{i}(1+\theta_{1}h_{n})^{-1},~~\gamma_{l,0}^{-(j+1)}=C_{1},~i=0,1,\ldots,\frac{N}{2},$$
	$$|W_{l}^{+(j+1)}(x_{i})|\le C\gamma_{l,i}^{+(j+1)},\quad\gamma_{l,i}^{+(j+1)}=\prod_{n=\frac{N}{2}+1}^{i}(1+\theta_{2}h_{n})^{-1},~~\gamma_{l,\frac{N}{2}}^{+(j+1)}=C_{1},~i=\frac{N}{2}+1,\ldots,N,$$
	$$|W_{r}^{-(j+1)}(x_{i})|\le C\gamma_{r,i}^{-(j+1)},\quad\gamma_{r,i}^{-(j+1)}=\prod_{n=i+1}^{\frac{N}{2}}(1+\theta_{2}h_{n})^{-1},~~\gamma_{l,\frac{N}{2}}^{-(j+1)}=C_{1},~i=0,1,\ldots,\frac{N}{2},$$
	$$|W_{r}^{+(j+1)}(x_{i})|\le C\gamma_{r,i}^{+(j+1)},\quad\gamma_{r,i}^{+(j+1)}=\prod_{n=i+1}^{N}(1+\theta_{1}h_{n})^{-1},~~\gamma_{l,N}^{+(j+1)}=C_{1},~i=\frac{N}{2}+1,\ldots,N.$$
	\begin{equation}
\label{theta}
	\theta_{1}=\frac{\sqrt{\rho \alpha}}{2\sqrt{\epsilon}},~~\theta_{2}=\frac{\sqrt{\rho \alpha}}{2\sqrt{\epsilon}}.
\end{equation}
\end{theorem}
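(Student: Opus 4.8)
The plan is to establish all four estimates with the discrete comparison principle stated at the start of this section, applied separately on each subdomain. For each component I would introduce the auxiliary mesh functions $\Phi_i^{\pm}=C\gamma_i\pm W_i$, where $\gamma_i$ is the relevant product barrier and $W_i$ the corresponding discrete layer component. Because each of $W_l^{-(j+1)},W_l^{+(j+1)},W_r^{-(j+1)},W_r^{+(j+1)}$ solves a \emph{homogeneous} problem $\mathcal{L}^N W=0$ on a single subdomain ($\Omega^{N-}$ or $\Omega^{N+}$) with prescribed Dirichlet values at its two endpoints, the argument runs on that subdomain alone, where $\mathcal{L}^N$ is a tridiagonal $M$-matrix and no interface-derivative condition is needed.

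The central step is to verify that each product barrier is a discrete supersolution, i.e. $\mathcal{L}^N\gamma_i\le 0$. I would exploit the telescoping structure of the products. For the left component in $\Omega^{-}$ (where $a<0$ and $D^{*}=D^{-}$) one obtains $D^{-}\gamma_{l,i}^{-}=-\theta_1\gamma_{l,i}^{-}$ and $\delta^2\gamma_{l,i}^{-}=\dfrac{2\theta_1^2 h_{i+1}}{(h_i+h_{i+1})(1+\theta_1 h_{i+1})}\gamma_{l,i}^{-}$, so that $\mathcal{L}^N\gamma_{l,i}^{-}=\gamma_{l,i}^{-}\Big[\tfrac{2\epsilon\theta_1^2 h_{i+1}}{(h_i+h_{i+1})(1+\theta_1 h_{i+1})}-\mu a\theta_1-c\Big]$. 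I would then bound the diffusion increment by $2\epsilon\theta_1^2=\rho\alpha/2\le\rho|a|/2$ and, since $a<0$, the convection increment $-\mu a\theta_1=\mu|a|\theta_1$ by $\rho|a|/2$, using $\theta_1=\sqrt{\rho\alpha}/(2\sqrt{\epsilon})$ together with the case bound $\sqrt{\alpha}\mu\le\sqrt{\rho\epsilon}$ (which forces $\mu\theta_1\le\rho/2$). Because $c=b+2/\Delta t\ge\rho|a|+2/\Delta t$, the bracket is $\le-2/\Delta t<0$. The analogous computation with $D^{+}$ and $a>0$ disposes of the two components on $\Omega^{+}$ and of the barriers carrying $\theta_2$; in every case the diffusion and convection contributions sum to at most $\rho|a|$ and are absorbed by $b$.

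It then remains to check the endpoints. At the reference node of each barrier $\gamma$ equals the normalizing constant $C_1$ while $|W|$ is $\mathcal{O}(1)$ there, and at the opposite node both $W$ and $\gamma$ are exponentially small; from $(1+\theta h_n)^{-1}\ge e^{-\theta h_n}$ one gets $\gamma\ge e^{-\theta x}$, so the continuous layer bounds of Theorem \ref{wlb} restricted to the mesh give $\Phi_i^{\pm}\ge 0$ at both endpoints once $C$ is taken large. With $\mathcal{L}^N\Phi_i^{\pm}=C\,\mathcal{L}^N\gamma_i\le 0$ in the interior and $\Phi_i^{\pm}\ge 0$ at the endpoints, the comparison principle gives $\Phi_i^{\pm}\ge 0$, i.e. $|W_i|\le C\gamma_i$, throughout the subdomain.

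The hard part is the \emph{uniform} verification of the supersolution inequality across the entire mesh: $\theta h_n$ is genuinely small only on the coarse uniform blocks, whereas on the Bakhvalov-graded blocks it need not be small, so the increments of the diffusion and convection operators must be estimated with the exact denominators $1+\theta h_n$ kept intact rather than linearized — which is precisely why the bounds above avoid any expansion in $\theta h_n$. A secondary, purely bookkeeping difficulty is keeping track of which of $\theta_1,\theta_2$ and which of $D^{\pm}$ attaches to each of the four components as the orientation of the layer flips at the two outer boundaries and across the interface $d$.
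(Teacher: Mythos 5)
Your proposal is correct and follows essentially the same route as the paper: product-form barrier functions $C\gamma_i \pm W_i$, the telescoping identities $D^{\pm}\gamma = \mp\theta\gamma$ and $\delta^2\gamma = \frac{2\theta^2 h}{(h_i+h_{i+1})(1+\theta h)}\gamma$, absorption of the diffusion and convection increments into $c = b + 2/\Delta t$ via $2\epsilon\theta^2 \le \rho|a|/2$ and $\mu\theta \le \rho/2$ (the case assumption), and the discrete minimum principle applied separately on $\Omega^{N-}$ and $\Omega^{N+}$. If anything, your endpoint verification (using $(1+\theta h_n)^{-1} \ge e^{-\theta h_n}$ together with the continuous layer bounds) is more explicit than the paper's, which simply asserts nonnegativity of the barriers at the subdomain boundaries for $C$ large.
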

\begin{proof}
	Let us define the barrier function for the left layer term $W_{l}^{-(j+1)}(x_{i})$  in $\Omega^{N-}$ as
	$$\psi_{l}^{-(j+1)}(x_{i})=\gamma_{l,i}^{-(j+1)} \pm W_{l}^{-(j+1)}(x_{i})$$
	where 
	$$\gamma_{l,i}^{-(j+1)}=\left\{
	\begin{array}{ll}
	\displaystyle \prod_{n=1}^{i}(1+\theta_{1}h_{n})^{-1},~~1\le i\le\frac{N}{2},\\
C_{1},\hspace{3cm}i=0,
	\end{array}
	\right.$$
	$ $\\
	and $\theta_{1}$ is defined in \eqref{theta} of Section \ref{4}.
	For large C, $\psi_{l}^{-(j+1)}(x_{0})\ge0$ and $\psi_{l}^{-(j+1)}(x_{\frac{N}{2}})\ge0,~~\forall~j=0,\ldots,M-1.$ 
	
	Consider,
	\begin{align*}
	\mathcal{L}^{N}\psi_{l}^{-(j+1)}(x_{i})&=\{\epsilon \delta^{2}_{x}\gamma_{l,i}^{-(j+1)}+\mu a^{j+\frac{1}{2}}(x_{i})D^{-}_{x}\gamma_{l,i}^{-(j+1)}-c^{j+\frac{1}{2}}(x_{i})\gamma_{l,i}^{-(j+1)}\}\\&=\big\{\epsilon \frac{\theta_{1}^{2}}{\hbar_{i}}\gamma_{l,i+1}^{-(j+1)}h_{i+1}+\mu a^{j+\frac{1}{2}}(x_{i})(-\theta_{1}\gamma_{l,i}^{-(j+1)})-c^{j+\frac{1}{2}}(x_{i})\gamma_{l,i}^{-(j+1)}\big\}~~\bigg(\hbar_{i}=\frac{h_{i}+h_{i+1}}{2}\bigg)\\&=\gamma_{l,i+1}^{-(j+1)}\bigg[\epsilon\theta_{1}^{2}\bigg(\frac{h_{i+1}}{\hbar_{i}}-2\bigg)+2\epsilon\theta_{1}^{2}-\mu a^{j+\frac{1}{2}}(x_{i})\theta_{1}-c^{j+\frac{1}{2}}(x_{i})\\&-\mu a^{j+\frac{1}{2}}(x_{i})\theta_{1}^{2}h_{i+1}-c^{j+\frac{1}{2}}(x_{i})\theta_{1}h_{i+1}\bigg]\\&\le \gamma_{l,i+1}^{-(j+1)}\big(2\epsilon\theta_{1}^{2}-\mu a^{j+\frac{1}{2}}(x_{i})\theta_{1}-c^{j+\frac{1}{2}}(x_{i})\big)\\&\le \gamma_{l,i+1}^{-(j+1)}\bigg(2\epsilon \frac{\rho\alpha}{4\epsilon}-\mu a^{j+\frac{1}{2}}(x_{i})\frac{\sqrt{\rho\alpha}}{2\sqrt{\epsilon}}-c^{j+\frac{1}{2}}(x_{i})\bigg)\\&\le \gamma_{l,i+1}^{-(j+1)}(\rho|a^{j+\frac{1}{2}}(x_{i})|-c^{j+\frac{1}{2}}(x_{i}))\le 0
	\end{align*}
	Therefore, by the discrete minimum principle defined in \cite{ER1} for the continuous case, we prove that
	$\psi_{l}^{-(j+1)}(x_{i})\ge 0$, $\forall~x\in \Omega^{N-}.$\\
Similarly, we define the barrier function for right layer term $W_{r}^{-(j+1)}(x_{i})$ in $\Omega^{N-}$ as
	$$\psi_{r}^{-(j+1)}(x_{i})=C\gamma_{r,i}^{-(j+1)} \pm W_{r}^{-(j+1)}(x_{i})$$
	where
	$$\gamma_{r,i}^{-(j+1)}=\left\{
	\begin{array}{ll}
	\displaystyle \prod_{n=i+1}^{\frac{N}{2}}(1+\theta_{2}h_{n})^{-1},~~0\le i\le\frac{N}{2}-1,\\
C_{1},\hspace{3cm}i=\frac{N}{2},
	\end{array}
	\right.$$ \\
	where $\theta_{2}$ is defined in \eqref{theta} of Section \ref{4}.
	For large C, $\psi_{r}^{-(j+1)}(x_{0})\ge0$ and $\psi_{r}^{-(j+1)}(x_\frac{N}{2})\ge0,~~\forall~j=0,\ldots,M-1.$ 
	
	Consider
	\begin{align*}
	\mathcal{L}^{N}\psi_{r}^{-(j+1)}(x_{i})&=C\big\{\epsilon \delta^{2}_{x}\gamma_{r,i}^{-(j+1)}+\mu a^{j+\frac{1}{2}}(x_{i})D^{-}_{x}\gamma_{r,i}^{-(j+1)}-c^{j+\frac{1}{2}}(x_{i})\gamma_{r,i}^{-(j+1)}\big\}\\&=C\bigg\{\epsilon \frac{\theta_{2}^{2}h_{i}}{\hbar_{i}}\frac{\gamma_{r,i}^{-(j+1)}}{1+\theta_{2}h_{i}}+\mu a^{j+\frac{1}{2}}(x_{i})(\theta_{2}\gamma_{r,i-1}^{-(j+1)})-c^{j+\frac{1}{2}}(x_{i})\gamma_{r,i}^{-(j+1)}\bigg\}\\&=\frac{\gamma_{r,i}^{-(j+1)}}{1+\theta_{2}h_{i}}\bigg[\epsilon\frac{\theta_{2}^{2}h_{i}}{\hbar_{i}}+\mu a^{j+\frac{1}{2}}(x_{i})\theta_{2}-c^{j+\frac{1}{2}}(x_{i})(1+\theta_{2}h_{i})\bigg]\\&=\frac{\gamma_{r,i}^{-(j+1)}}{1+\theta_{2}h_{i}}\bigg[\epsilon \theta_{2}^{2}\bigg(\frac{h_{i}}{\hbar_{i}}-2\bigg)+2\epsilon \theta_{2}^{2}+\mu a^{j+\frac{1}{2}}(x_{i})\theta_{2}-c^{j+\frac{1}{2}}(x_{i})(1+\theta_{2}h_{i})\bigg]\\&\le\frac{\gamma_{r,i}^{-(j+1)}}{1+\theta_{2}h_{i}}\big(2\epsilon\theta_{2}^{2}+\mu a^{j+\frac{1}{2}}(x_{i})\theta_{2}-c^{j+\frac{1}{2}}(x_{i})\big)\\&\le\frac{\gamma_{r,i}^{-(j+1)}}{1+\theta_{2}h_{i}}\big(2\epsilon\frac{\rho\alpha}{4\epsilon}+\mu a^{j+\frac{1}{2}}(x_{i})\frac{\sqrt{\rho\alpha}}{2\sqrt{\epsilon}}-c^{j+\frac{1}{2}}(x_{i})\big)\\&=\frac{\gamma_{r,i}^{-(j+1)}}{1+\theta_{2}h_{i}}(-c^{j+\frac{1}{2}}(x_{i}))\le 0
	\end{align*}
	
Therefore, by the discrete minimum principle defined in \cite{ER1} for the continuous case, we prove that
$\psi_{r}^{-(j+1)}(x_{i})\ge 0$,~$\forall~x\in \Omega^{N-}.$
Similarly, by defining the corresponding barrier functions $\psi_{l}^{+(j+1)}(x_{i})$ and $\psi_{r}^{+(j+1)}(x_{i})$ for $W_{l}^{+(j+1)}(x_{i})$ and $W_{r}^{+(j+1)}(x_{i})$ we get the remaining two inequalities for the left and right layer term in $\Omega^{N+}.$
\end{proof}
\begin{theorem}
	The discrete regular component $V^{j+1}(x_{i})$  defined in \eqref{reg} and $ v(x,t)$ is solution of the problem \eqref{v}. So, the error in the regular component satisfies the following estimate for $\sqrt{\alpha}\mu\le \sqrt{\rho \epsilon}$ :
	$$\|V-v\|_{\Omega^{N-}\cup\Omega^{N+}}\le C(N^{-1}+(\Delta t)^{2}).$$
\end{theorem}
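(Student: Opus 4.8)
The plan is to separate the temporal and spatial contributions. Writing $v^{j+1}(x_i)=v(x_i,t_{j+1})$ and introducing the semi-discrete regular component $\tilde v^{j+1}$ (Crank--Nicolson in time, continuous in space, solving $\tilde{\mathcal L}\tilde v^{j+1}=g$ with the boundary and interface data of \eqref{LV-}--\eqref{LV+}), I would use
$$\|V^{j+1}-v^{j+1}\|\le\|V^{j+1}-\tilde v^{j+1}\|+\|\tilde v^{j+1}-v^{j+1}\|.$$
The second term is the temporal error of the regular component and is bounded by $C(\Delta t)^2$ by the same Crank--Nicolson argument that produced the global error bound (local truncation of order $(\Delta t)^3$ upgraded through the stability of $\tilde{\mathcal L}$). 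The whole difficulty is therefore the spatial error $\|V^{j+1}-\tilde v^{j+1}\|$, which I claim is $O(N^{-1})$.

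For the spatial estimate I would compute the consistency error $\mathcal L^N(\tilde v^{j+1}-V^{j+1})(x_i)$. Since the zeroth-order term $-c^{j+1/2}I$ is reproduced exactly, only the replacement of $\partial_x^2$ by $\delta_x^2$ and of $\partial_x$ by $D_x^*$ contributes, and the standard Taylor estimates on the nonuniform mesh give
$$|\delta_x^2\tilde v^{j+1}(x_i)-\tilde v^{j+1}_{xx}(x_i)|\le C(h_i+h_{i+1})\,\|\tilde v^{j+1}_{xxx}\|,\qquad |D_x^*\tilde v^{j+1}(x_i)-\tilde v^{j+1}_x(x_i)|\le C h_i\,\|\tilde v^{j+1}_{xx}\|.$$
The decisive observation is that the semi-discrete regular component inherits the bounds of Theorem \ref{wwlbd}: in the case $\sqrt\alpha\mu\le\sqrt{\rho\epsilon}$ one has $\|\tilde v^{j+1}_{xx}\|\le C(1+\epsilon^{1/2})\le C$ and $\|\tilde v^{j+1}_{xxx}\|\le C$, so the weighted terms $\epsilon\|\tilde v^{j+1}_{xxx}\|$ and $\mu\|\tilde v^{j+1}_{xx}\|$ remain $O(1)$ uniformly in $\epsilon,\mu$. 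Together with $h_i\le CN^{-1}$ at every node --- which holds in the Bakhvalov-graded blocks by Lemma \ref{assump} and Assumption \ref{assump1}, and trivially in the uniform outer blocks --- this yields $|\mathcal L^N(\tilde v^{j+1}-V^{j+1})(x_i)|\le CN^{-1}$, up to the source-difference terms discussed below.

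The consistency bound is then converted into a solution bound by the discrete comparison principle stated above. As $\tilde v^{j+1}$ and $V^{j+1}$ carry the same endpoint and interface values, the error vanishes at $x_0$, $x_N$ and at $x_{N/2}$, while $c^{j+1/2}=b^{j+1/2}+2/\Delta t\ge\beta>0$. Choosing the constant barrier $CN^{-1}/\beta$ and invoking the discrete minimum principle (with the three-point condition honored at $x_{N/2}$) gives $\|V^{j+1}-\tilde v^{j+1}\|\le CN^{-1}$; adding the temporal term gives $C(N^{-1}+(\Delta t)^2)$.

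The step I expect to be the main obstacle is making the consistency estimate genuinely uniform while handling the time coupling. The discrete source $\tilde g$ involves the previous fully discrete level $V^j$ whereas the semi-discrete source involves $\tilde v^j$, so $\mathcal L^N(\tilde v^{j+1}-V^{j+1})$ inherits both the truncation error at level $j$ and the propagated error $\eta^j=\tilde v^j-V^j$; the clean cancellation here uses $d^{j+1/2}-c^{j+1/2}=-4/\Delta t$, and the resulting recursion must be closed by induction on the time index, with the strong damping from the $2/\Delta t$ contribution to $c^{j+1/2}$ keeping the accumulated spatial error at $O(N^{-1})$. Verifying that the graded mesh produces no spurious logarithmic factor (so that $h_i\le CN^{-1}$ truly holds throughout) and that the barrier argument survives across the interface are the remaining delicate points.
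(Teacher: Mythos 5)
Your proposal is correct in substance and relies on the same core machinery as the paper: Taylor-type consistency estimates on the nonuniform mesh, the derivative bounds of Theorem \ref{wwlbd} (which for $k=2,3$ make $\epsilon\|v_{xxx}\|+\mu\|v_{xx}\|$ uniformly $O(1)$), the bound $h_i\le CN^{-1}$, and a barrier function combined with the discrete minimum principle. The organization, however, differs. You insert a semi-discrete intermediate $\tilde v^{j+1}$ and split the error into temporal plus spatial parts, closing the resulting time recursion (through the source $\tilde g$, which carries the previous level) by induction; the paper instead compares $V^{\pm(j+1)}$ \emph{directly} with the continuous regular component $v^{\pm}$ in a single truncation-error computation, absorbing the temporal contribution as one $C(\Delta t)^2$ term, and then applies one barrier argument on each of $\Omega^{N-}$ and $\Omega^{N+}$ with the constant barrier $C(N^{-1}+(\Delta t)^2)$. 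The paper's route is shorter and, importantly, needs derivative bounds only for the \emph{continuous} component $v$, which are already established; your route requires the additional (plausible, but unproved) lemma that the semi-discrete regular component inherits the bounds of Theorem \ref{wwlbd}, in exchange for which you get a more modular argument that is more honest about the coupling between time levels --- a point the paper's one-shot estimate glosses over. One of your stated worries is unnecessary: by \eqref{LV-}--\eqref{LV+} the components $V^{-(j+1)}$ and $V^{+(j+1)}$ solve separate Dirichlet problems with the \emph{exact} values $v^{\mp}(d\mp,t_{j+1})$ prescribed at $x_{N/2}$, so no three-point interface condition and no transmission of error across $d$ enters this theorem at all; the interface error is treated separately later, in the theorem bounding $(D^{+}-D^{-})(U^{j+1}-u^{j+1})(x_{N/2})$ and in Theorem \ref{main}.
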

\begin{proof}
		Using the truncation error in domain $\Omega^{N-}$, we get
	\begin{align*}
	|\mathcal{L}^{N}(V^{-(j+1)}-v^{-(j+1)})(x_{i})|&=|\mathcal{L}^{N}V^{-(j+1)}(x_{i})-\mathcal{L}^{N}v^{-(j+1)}(x_{i})|,~~(x_{i},t_{j+1})\in\Omega^{N-}\\
	&\le \epsilon \bigg|\bigg(\delta^{2}_{x}-\frac{d^{2}}{dx^{2}}\bigg) v^{-(j+1)}(x_{i})\bigg|+\mu |a^{j+\frac{1}{2}}(x_{i})|\bigg|\bigg(D^{-}_{x}-\frac{d}{dx}\bigg)v^{-(j+1)}(x_{i})\bigg|\\
	&+\bigg|D_{t}^{-}u^{-(j+1)}(x_{i})-\frac{\delta}{\delta t}u^{-(j+\frac{1}{2})}(x_{i})\bigg|\\
	&\le C_{1} \max_{0\le i\le\frac{N}{2}} h_{i}(\epsilon\| v^{-}_{xxx}\|_{\Omega^{-}}+\mu\|v^{-}_{xx}\|_{\Omega^{-}})+C_{2}(\Delta t)^{2}\\
	&\le C(N^{-1}+(\Delta t)^{2}).
	\end{align*}
Define the barrier function
	$$\psi^{j+1}(x_{i})=C(N^{-1}+(\Delta t)^{2})\pm(V^{-(j+1)}-v^{-(j+1)})(x_{i}),~~~(x_{i},t_{j+1})\in\Omega^{N-}.$$
	It is clear that $\psi^{j+1}(x_{0})\ge0$ and $\psi^{j+1}(x_{\frac{N}{2}})\ge0$. For large C, we obtain $~\mathcal{L}^N\psi^{j+1}(x_{i})\le0$. 
	Applying discrete minimum principle \cite{ER1}, we get $\psi^{j+1}(x_{i})\ge0.$
	\begin{equation}
	\label{VV-}
	\|V^{-}-v^{-}\|_{\Omega^{N-}}\le C(N^{-1}+(\Delta t)^{2}),~~~(x_{i},t_{j+1})\in\Omega^{N-}.
	\end{equation} 
	The error estimate in the domain $\Omega^{N+}$ is derived similarly.
	\begin{equation}
	\label{VV+}
	\|V^{+}-v^{+}\|_{\Omega^{N+}}\le C(N^{-1}+(\Delta t)^{2}),~~~(x_{i},t_{j+1})\in\Omega^{N+}.
	\end{equation} 
Combining the above two equations \eqref{VV-} and \eqref{VV+}, we obtain the desired result.	
\end{proof}
\begin{lemma}
	\label{L1}
	Let $\sqrt{\alpha}\mu\le \sqrt{\rho \epsilon}$ and $W_{l}^{-(j+1)}(x_{i}), w_{l}^{-}(x,t)$ are solution of the problem \eqref{LW-} and \eqref{lw} respectively. The left singular component of the truncation error satisfies the following estimate in $\Omega^{N-}$
	$$\|W_{l}^{-}-w_{l}^{-}\|_{\Omega^{N-}}\le C(N^{-1}+(\Delta t)^{2}).$$
\end{lemma}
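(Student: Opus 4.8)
The plan is to estimate the nodal error $W_{l}^{-}-w_{l}^{-}$ by the standard consistency-plus-barrier technique, separating $\Omega^{N-}$ into the Bakhvalov-graded layer region $[0,\tau_{1}]$ and the uniform outer region $[\tau_{1},d]$. Since $\mathcal{L}^{N}W_{l}^{-(j+1)}(x_{i})=0$ by \eqref{LW-}, the error satisfies $\mathcal{L}^{N}(W_{l}^{-}-w_{l}^{-})(x_{i})=-\mathcal{L}^{N}w_{l}^{-}(x_{i})$, and I would write the right-hand side as a temporal consistency term plus a spatial consistency term. The temporal part is controlled by the Crank--Nicolson estimates already established (local truncation $\mathcal{O}((\Delta t)^{3})$, global $\mathcal{O}((\Delta t)^{2})$), contributing $C(\Delta t)^{2}$. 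The spatial part is $\epsilon(\delta_{x}^{2}-\frac{d^{2}}{dx^{2}})w_{l}^{-}(x_{i})+\mu a^{j+1/2}(x_{i})(D_{x}^{-}-\frac{d}{dx})w_{l}^{-}(x_{i})$, which I would bound by Taylor expansion in terms of the mesh widths $h_{i},h_{i+1}$ and the layer derivative bounds $\|(w_{l}^{-})^{(k)}\|\le C\epsilon^{-k/2}$ from Theorem \ref{wwlbd}.

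In the outer region the layer component and all its derivatives are exponentially small: by Theorem \ref{wlb}, $|w_{l}^{-}(x,t)|\le Ce^{-\theta_{1}x}$, so at and beyond the transition point $x=\tau_{1}=\frac{4}{\theta_{1}}\ln N$ one has $e^{-\theta_{1}\tau_{1}}=N^{-4}$, whence the consistency error there is $\le CN^{-1}$ trivially. The delicate estimate is in the fine region $[0,\tau_{1}]$, where the derivatives blow up like $\epsilon^{-3/2}$. Here I would change variables to the uniform mesh $\xi$ through $x=\frac{8}{\theta_{1}}\phi_{1}(\xi)$ and combine $h_{i}\le CN^{-1}$ with the mesh-generating bounds of Lemma \ref{assump}, namely $\max|\phi_{1}'|\le CN$ and $\int_{0}^{1/8}\{\phi_{1}'\}^{2}\,d\xi\le CN$. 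The essential point is that the factor $\epsilon^{-1/2}$ produced by each extra derivative of $w_{l}^{-}$ is compensated by the factor $\theta_{1}^{-1}\sim\sqrt{\epsilon}$ carried by the mesh map, so that the products $h_{i}\,\epsilon\|(w_{l}^{-})_{xxx}\|$ and $h_{i}\,\mu\|(w_{l}^{-})_{xx}\|$ reduce to quantities of the form $(\mathrm{const})\cdot(\xi_{i}-\xi_{i-1})\,\phi_{1}'(\xi)$ and, after summation (or via the $L^{2}$-type integral bound), to $\mathcal{O}(N^{-1})$.

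Finally I would close the argument with a discrete barrier. Set $\psi^{j+1}(x_{i})=C(N^{-1}+(\Delta t)^{2})\pm(W_{l}^{-}-w_{l}^{-})(x_{i})$ on $\Omega^{N-}$; the error vanishes at $x_{0}$ and at $x_{\frac{N}{2}}=d$ because the discrete and continuous left layers share the same data in \eqref{LW-} and \eqref{lw}, so $\psi^{j+1}(x_{0})\ge0$ and $\psi^{j+1}(x_{\frac{N}{2}})\ge0$. Using the bound on $\mathcal{L}^{N}(W_{l}^{-}-w_{l}^{-})$ obtained above together with $c^{j+1/2}\ge\beta+\tfrac{2}{\Delta t}>0$, a sufficiently large $C$ yields $\mathcal{L}^{N}\psi^{j+1}(x_{i})\le0$, and the discrete minimum principle (as in the theorem quoted from \cite{DP}) gives $\psi^{j+1}(x_{i})\ge0$, i.e.\ the claimed estimate.

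The main obstacle is balancing the layer-derivative blow-up against the graded mesh in the fine region. In particular, a naive pointwise bound by the constant barrier $C(N^{-1}+(\Delta t)^{2})$ may fail inside $[0,\tau_{1}]$, so I expect either to sum the local truncation errors along the mesh (a discrete Green's-function / telescoping estimate) or to replace the constant barrier by a layer-adapted one built from the $\gamma_{l,i}^{-(j+1)}$ of the preceding theorem. Making the cancellation between $\epsilon^{-1/2}$ and $\theta_{1}^{-1}\sim\sqrt{\epsilon}$ rigorous, using $\int\{\phi_{1}'\}^{2}\,d\xi\le CN$ rather than only $\max|\phi_{1}'|\le CN$, is the crux; the $\Omega^{N+}$ part of $w_{l}$ and the remaining three singular components follow by the same scheme with the appropriate $\theta_{2}$ and mesh functions $\phi_{2},\phi_{3},\phi_{4}$.
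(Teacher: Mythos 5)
Your outline matches the paper's skeleton (split $\Omega^{N-}$ at the transition point $\tau_{1}$, truncation error analysis plus a discrete minimum principle), but two essential steps are missing, and they are precisely the steps the paper has to work for. First, the outer region $[\tau_{1},d)$: you claim that since $|w_{l}^{-}|\le Ce^{-\theta_{1}x}\le CN^{-4}$ past the transition point, ``the consistency error there is $\le CN^{-1}$ trivially'' --- but smallness of the consistency error does not by itself bound $W_{l}^{-}-w_{l}^{-}$; you still need stability, and your stability step (the constant barrier over all of $\Omega^{N-}$) is the very one you concede fails in the fine region. The paper argues differently in the outer region: it uses the triangle inequality $|W_{l}^{-}-w_{l}^{-}|\le |W_{l}^{-}|+|w_{l}^{-}|$ and shows that the \emph{discrete} layer component is itself $\le CN^{-4}$ there, via the bound $|W_{l}^{-(j+1)}(x_{i})|\le \prod_{n=1}^{N/8}(1+\theta_{1}h_{n})^{-1}$ from the preceding theorem, the inequality $\log(1+t)\ge t-t^{2}/2$, the identity $\sum_{n}\theta_{1}h_{n}=\theta_{1}\tau_{1}=4\ln N$, and H\"older's inequality combined with $\int_{0}^{1/8}\{\phi_{1}'(\xi)\}^{2}d\xi\le CN$ to control $\sum_{n}(\theta_{1}h_{n})^{2}$. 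Note that this outer-region product estimate is where the $L^{2}$ bound of Lemma \ref{assump} is actually used --- not, as you suggest, in the fine-region truncation analysis, which needs only $h_{i}\le C/\theta_{1}\le CN^{-1}$.

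Second, the fine region $(0,\tau_{1})$: you correctly compute that the truncation error there is only $C(N^{-1}/\sqrt{\epsilon}+(\Delta t)^{2})$, correctly observe that a constant barrier $C(N^{-1}+(\Delta t)^{2})$ therefore cannot dominate it, but then leave the repair as a choice between two unexecuted alternatives (Green's-function summation or a $\gamma_{l,i}$-based barrier). The paper closes the argument with a concrete barrier that is \emph{linear} in $x_{i}$, namely $\psi^{j+1}(x_{i})=C_{1}\big(x_{i}N^{-1}/(\sqrt{\epsilon}\ln N)+(\Delta t)^{2}\big)\pm(W_{l}^{-(j+1)}-w_{l}^{-(j+1)})(x_{i})$, posed on $(0,\tau_{1})$ only, with endpoint nonnegativity at $x_{0}$ (where the discrete and continuous data coincide) and at $x_{N/8}$ (supplied by the outer-region $N^{-4}$ bound). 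The minimum principle then gives $|W_{l}^{-}-w_{l}^{-}|\le C_{1}\big(\tau_{1}N^{-1}/(\sqrt{\epsilon}\ln N)+(\Delta t)^{2}\big)$, and the whole point of the construction is that $\tau_{1}=\frac{4}{\theta_{1}}\ln N=\frac{4\sqrt{\epsilon}}{\sqrt{\rho\alpha}}\ln N$, so the factor $\tau_{1}/(\sqrt{\epsilon}\ln N)$ collapses to a constant and the bound becomes $C(N^{-1}+(\Delta t)^{2})$. Without this step (or one of your alternatives actually carried out), the proposal identifies the crux but does not resolve it, so the claimed estimate is not established.
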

\begin{proof}
	We first calculate the truncation error in the outer region $[\tau_{1},d)\times(0, T]$:
	In $[\tau_{1},d)\times(0, T]$,
	the left layer component satisfies the following bound given in theorem \eqref{wlb}:
	\begin{equation}
	\label{wl}
	|w_{l}^{-(j+1)}(x_{i})|\le C\exp^{-\theta_{1}x_{i}}\le C\exp^{- \theta_{1} \tau_{1}}\le CN^{-4},~~0\le j\le M-1.
	\end{equation}
	Also  $W_{l}^{-(j+1)}(x_{i})$ is decreasing function in $[\tau_{1},d)\times(0, T]$, so
	\begin{align*}
	|W^{-(j+1)}_{l}(x_{i})|&\le|W_{l}^{-(j+1)}(x_{\frac{N}{8}})|\\&= \prod_{n=1}^{\frac{N}{8}}(1+\theta_{1}h_{n})^{-1}\\ \log|W^{-(j+1)}_{l}(x_{i})|&\le \log\bigg(\prod_{n=1}^{\frac{N}{8}}(1+\theta_{1}h_{n})^{-1}\bigg)\\&=-\sum_{n=1}^{\frac{N}{8}}\log (1+\theta_{1}h_{n})\\ \text{As}~ \log(1+t^2)&\ge t-\frac{t^2}{2}~ \text{for}~ t\ge0.\\
	\text{So,}~ \sum_{n=1}^{\frac{N}{8}}\log (1+\theta_{1}h_{n})&\ge \sum_{n=1}^{\frac{N}{8}}\theta_{1}h_{n}-\sum_{n=1}^{\frac{N}{8}}\bigg(\frac{\theta_{1} h_{n}}{2}\bigg)^2\\&\ge 4\ln N-\sum_{n=1}^{\frac{N}{8}}\bigg(\frac{\theta_{1} h_{n}}{2}\bigg)^2.
	\end{align*}
	To calculate $\sum_{n=1}^{\frac{N}{8}}\bigg(\frac{\theta_{1} h_{n}}{2}\bigg)^2$, let $1\le n \le \frac{N}{8},$\\
	\begin{align*}
	h_{n}=x_{n}-x_{n-1}&=\frac{8}{\theta_{1}}(\phi_{1}(\xi_{n})-\phi_{1}(\xi_{n-1}))\\&=\frac{8}{\theta_{1}}\int_{\xi_{n-1}}^{\xi_{n}}\phi_{1}'(\xi)d\xi\\\frac{\theta_{1}h_{n}}{8}&=\int_{\xi_{n-1}}^{\xi_{n}}\phi_{1}'(\xi)d\xi\\\implies
	\bigg(\frac{\theta_{1}h_{n}}{8}\bigg)^2 &\le(\xi_{n}-\xi_{n-1})\int_{\xi_{n-1}}^{\xi_{n}}\phi_{1}'(\xi)^{2}d\xi~~~\text{(Using Holder's inequality)}\\
	\sum_{n=1}^{\frac{N}{8}}\bigg(\frac{\theta_{1}h_{n}}{8}\bigg)^2&\le N^{-1}\int_{0}^{\frac{1}{8}}\phi_{1}'(\xi)^{2}d\xi\\&\le C ~~~(\text{from Lemma }~\ref{assump}).
	\end{align*}
Hence,\\
\begin{align}
\label{W-}
|W_{l}^{-(j+1)}(x_{i})|\le CN^{-4},~(x_{i},t_{j+1})\in [\tau_{1},d)\times(0, T].
\end{align}
By combining equation (\ref{wl}) and (\ref{W-}), we get
\begin{equation}
\label{ww}
|(W_{l}^{-(j+1)}-w_{l}^{-(j+1)})(x_{i})|\le |W_{l}^{-(j+1)}(x_{i})|+|w_{l}^{-(j+1)}(x_{i})|\le CN^{-4},~~(x_{i},t_{j+1})\in [\tau_{1},d)\times(0, T].
\end{equation}

We use truncation error analysis to find error in the inner region $(0,\tau_{1})\times(0,T]$. Using the derivative bounds for the left layer component $w_{l}^{-}$ from the theorem \eqref{wwlbd}, we obtain
\begin{align*}
|\mathcal{L}^{N}(W^{-(j+1)}_{l}-w^{-(j+1)}_{l})(x_{i})|
&\le \epsilon \bigg|\bigg(\delta^{2}_{x}-\frac{d^{2}}{dx^{2}}\bigg) w^{-(j+1)}_{l}(x_{i})\bigg|+\mu |a^{j+\frac{1}{2}}(x_{i})|\bigg|\bigg(D^{-}_{x}-\frac{d}{dx}\bigg)w^{-(j+1)}_{l}(x_{i})\bigg|\\&+\bigg|D_{t}^{-}w_{l}^{-(j+1)}(x_{i})-\frac{\delta}{\delta t}w_{l}^{-(j+\frac{1}{2})}(x_{i})\bigg|\\
&\le C_{1} \max_{0\le i\le\frac{N}{2}} h_{i}(\epsilon\|w^{-}_{xxx}\|_{\Omega^{-}}+\mu\|w^{-}_{xx}\|_{\Omega^{-}})+C_{2}(\Delta t)^{2})\\
& \le \frac{C_{1}}{\theta_{1}}\bigg(\frac{1}{\sqrt{\epsilon}}\bigg)+C_{2}(\Delta t)^{2}~~(\text{from theorem } \eqref{wwlbd})\\
&\le C\bigg(\frac{N^{-1}}{\sqrt{\epsilon}}+(\Delta t)^2\bigg).
\end{align*}
	Choosing the barrier function for the layer component as
$$\psi^{j+1}(x_{i})=C_{1}\bigg(\frac{x_{i}N^{-1}}{\sqrt{\epsilon} \ln N}+(\Delta t)^{2}\bigg)\pm(W_{l}^{-(j+1)}-w_{l}^{-(j+1)})(x_{i}),~~(x_{i},t_{j+1})\in(0,\tau_{1})\times(0,T].$$
For sufficiently large $C_{1}$, we have $\mathcal{L}^{N}\psi^{j+1}(x_{i})\le0,~(x_{i},t_{j+1})\in (0,\tau_{1})\times(0,T]$. Also $\psi^{j+1}(x_{0})\ge0$ and $\psi^{j+1}(x_{\frac{N}{8}})\ge0$. Hence by discrete minimum principle \cite{ER1}, we can obtain the following bounds:
\begin{align*}
|(W_{l}^{-(j+1)}-w_{l}^{-(j+1)})(x_{i})|&\le C_{1}\bigg(\frac{x_{i}N^{-1}}{\sqrt{\epsilon} \ln N}+(\Delta t)^{2}\bigg)\\&= \bigg(\frac{\tau_{1}N^{-1}}{\sqrt{\epsilon} \ln N}+(\Delta t)^{2}\bigg)
\end{align*}
\begin{equation}
\label{WW}
|(W_{l}^{-(j+1)}-w_{l}^{-(j+1)})(x_{i})|\le C(N^{-1}+(\Delta t)^{2}).
\end{equation}
Hence, by combining equation \eqref{ww} and \eqref{WW}, we have bounds for the left layer component
\begin{equation}
\label{ww-}
\|W_{l}^{-}-w_{l}^{-}\|_{\Omega^{N-}}\le C(N^{-1}+(\Delta t)^{2}).
\end{equation}
\end{proof}
\begin{lemma}
	\label{L2}
	Let $\sqrt{\alpha}\mu\le \sqrt{\rho \epsilon}$ and $W_{l}^{+(j+1)}(x_{i}), w_{l}^{+}(x,t)$ be the solution of the problem \eqref{LW+} and \eqref{lw} respectively. The left singular component of the truncation error satisfies the following estimate in $\Omega^{N+}$
	$$\|W_{l}^{+}-w_{l}^{+}\|_{\Omega^{N+}}\le CN^{-1}+(\Delta t)^{2})$$
\end{lemma}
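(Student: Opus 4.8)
The plan is to prove this as the $\Omega^{N+}$ mirror of Lemma \ref{L1}: by Theorem \ref{wlb} the continuous left component $w_l^+$ is an interior layer at $x=d^{+}$ decaying like $e^{-\theta_2(x-d)}$, so I would split $\Omega^{N+}$ into the outer region $[d+\tau_3,1)\times(0,T]$ and the inner (layer) region $(d,d+\tau_3)\times(0,T]$, where $\tau_3=\tfrac{4}{\theta_2}\ln N$ and the mesh on indices $\tfrac{N}{2}\le i\le\tfrac{5N}{8}$ is generated by $\phi_3$. On the outer region Theorem \ref{wlb} gives $|w_l^{+(j+1)}(x_i)|\le Ce^{-\theta_2\tau_3}\le CN^{-4}$ for $x_i\ge d+\tau_3$, while for the discrete part I would use the monotone bound $|W_l^{+(j+1)}(x_i)|\le C\gamma_{l,i}^{+(j+1)}$ with $\gamma_{l,i}^{+(j+1)}=\prod_{n=N/2+1}^{i}(1+\theta_2 h_n)^{-1}$, which is largest at $i=\tfrac{5N}{8}$. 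Taking logarithms, using $\sum_{n=N/2+1}^{5N/8}\theta_2 h_n=\theta_2\tau_3=4\ln N$ and the Hölder/Lemma \ref{assump} estimate $\sum(\theta_2 h_n/8)^2\le N^{-1}\int_{1/2}^{5/8}\{\phi_3'(\xi)\}^2\,d\xi\le C$, I obtain $|W_l^{+(j+1)}(x_i)|\le CN^{-4}$, hence $|(W_l^{+(j+1)}-w_l^{+(j+1)})(x_i)|\le CN^{-4}$ on $[d+\tau_3,1)$.

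For the inner region I would estimate the consistency error of $\mathcal{L}^N$ applied to $w_l^+$. Using $h_i\le CN^{-1}$ on $[d,d+\tau_3]$ together with $\|w^{+}_{xx}\|_{\Omega^{+}}\le C\epsilon^{-1}$ and $\|w^{+}_{xxx}\|_{\Omega^{+}}\le C\epsilon^{-3/2}$ from Theorem \ref{wwlbd}, the truncation error is $|\mathcal{L}^N(W_l^{+(j+1)}-w_l^{+(j+1)})(x_i)|\le C(\tfrac{N^{-1}}{\sqrt\epsilon}+(\Delta t)^2)$, exactly as for the left layer. I would then choose a barrier $\psi^{j+1}$, verify $\psi^{j+1}\ge0$ at $x_{N/2}=d$ (where the error vanishes by the boundary data in \eqref{LW+}) and at $x_{5N/8}=d+\tau_3$ (from the outer $O(N^{-4})$ bound), and invoke the discrete minimum principle \cite{ER1} to get $|(W_l^{+(j+1)}-w_l^{+(j+1)})(x_i)|\le C(N^{-1}+(\Delta t)^2)$ on the inner region. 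Combining the two regions and taking the maximum over $\Omega^{N+}$ yields $\|W_l^+-w_l^+\|_{\Omega^{N+}}\le C(N^{-1}+(\Delta t)^2)$.

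The one place where the argument genuinely departs from Lemma \ref{L1}, and which I expect to be the main obstacle, is the construction of $\psi^{j+1}$ and the verification $\mathcal{L}^N\psi^{j+1}\le0$. On $\Omega^+$ we have $a\ge\alpha_2>0$ and $\mathcal{L}^N$ uses the forward difference $D_x^+$, so the linear barrier $x_i-d$ used in Lemma \ref{L1} produces $\mathcal{L}^N(x_i-d)=\mu a-c(x_i-d)$ with a \emph{positive} convective term $\mu a$ near $x=d$ and is not a supersolution. I would therefore reverse the orientation and take $\psi^{j+1}(x_i)=C_1\big(\tfrac{(d+\tau_3-x_i)N^{-1}}{\sqrt\epsilon\ln N}+N^{-4}+(\Delta t)^2\big)\pm(W_l^{+(j+1)}-w_l^{+(j+1)})(x_i)$, for which $\mathcal{L}^N(d+\tau_3-x_i)=-\mu a-c(d+\tau_3-x_i)\le0$ throughout the inner region; since $x_i-d\le\tau_3=\tfrac{4}{\theta_2}\ln N$ the barrier is $O(N^{-1})$ at $x=d$, and the added $N^{-4}$, $(\Delta t)^2$ terms fix the sign of $\psi^{j+1}$ at the right endpoint. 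The delicate point is to show, uniformly in $\epsilon,\mu,\Delta t$, that this barrier dominates the $O(N^{-1}/\sqrt\epsilon)$ consistency error through the reaction term $-c(\cdot)$; if that balance proves awkward, the clean alternative is to sharpen the inner truncation error to $O(N^{-1}+(\Delta t)^2)$ directly from the Bakhvalov grading and then use the constant barrier $C(N^{-1}+(\Delta t)^2)$, whose supersolution property follows at once from $\mathcal{L}^N[\mathrm{const}]=-c\,\mathrm{const}\le0$.
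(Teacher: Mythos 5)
Your proposal follows the paper's own proof essentially step for step: the same splitting of $\Omega^{N+}$ into the outer region $[d+\tau_{3},1)\times(0,T]$ and the inner region $(d,d+\tau_{3})\times(0,T]$, the same $O(N^{-4})$ outer bounds obtained from Theorem \ref{wlb} and from the discrete product bound $\gamma_{l,i}^{+(j+1)}$ via the H\"older/Lemma \ref{assump} argument of Lemma \ref{L1}, the same inner truncation estimate $C\bigl(N^{-1}/\sqrt{\epsilon}+(\Delta t)^{2}\bigr)$ from Theorem \ref{wwlbd}, and exactly the reversed linear barrier $C_{1}\bigl((d+\tau_{3}-x_{i})N^{-1}/(\sqrt{\epsilon}\ln N)+(\Delta t)^{2}\bigr)$ with the discrete minimum principle that the paper uses. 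Your only deviations are improvements in care rather than in route: the explicit sign check showing why the Lemma \ref{L1} orientation fails when $a>0$ and $D_{x}^{+}$ is used, and the extra $N^{-4}$ term in the barrier, which repairs a point the paper glosses over (its barrier at $x_{\frac{5N}{8}}$ is only $C_{1}(\Delta t)^{2}$, whereas the error there is merely known to be $O(N^{-4})$, so nonnegativity at that endpoint is not guaranteed uniformly in $\Delta t$ without such a term).
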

\begin{proof}
	We first calculate the truncation error in outer region $[d+\tau_{3},1)\times(0, T]$:
	
	In $[d+\tau_{3},1)\times(0, T]$ i.e. $\frac{5N}{8}\le i < N$, the left layer component satisfies the following bound given in theorem \eqref{wlb}:
	\begin{equation}
	\label{wl+}
	|w_{l}^{+(j+1)}(x_{i})|\le Ce^{-\theta_{2}(x_{i}-d)}\le Ce^{-\theta_{2}\tau_{3}}\le CN^{-4},~~(x_{i},t_{j+1})\in[d+\tau_{3},1)\times(0, T].
	\end{equation}
	Also $W^{+(j+1)}_{l}(x_{i})$ is a decreasing function in $[d+\tau_{3},1)\times(0, T]$, so
	\begin{align*}
	|W^{+(j+1)}_{l}(x_{i})|&\le|W_{l}^{+(j+1)}(x_{\frac{5N}{8}})|= \prod_{n=\frac{N}{2}+1}^{\frac{5N}{8}}(1+\theta_{2}h_{n})^{-1}
	\end{align*}
Applying the similar arguments as in Lemma \eqref{L1}, we get
	\begin{align}
	\label{W+} 
	|W_{l}^{+(j+1)}(x_{i})|\le CN^{-4},~~(x_{i},t_{j+1})\in[d+\tau_{3},1)\times(0, T].
	\end{align}
Hence, by combining equation (\ref{wl+}) and (\ref{W+}) we get
	\begin{equation}
	\label{www}
	|(W_{l}^{+(j+1)}-w_{l}^{+(j+1)})(x_{i})|\le |W_{l}^{+(j+1)}(x_{i})|+|w_{l}^{+(j+1)}(x_{i})|\le CN^{-4},~(x_{i},t_{j+1})\in[d+\tau_{3},1)\times(0, T].
	\end{equation}
	
	We use truncation error analysis to find errors in the inner region $(d,d+\tau_{3})\times(0,T])$. Using the derivative bounds for the left layer component $w_{l}^{+}$ from the theorem \eqref{wwlbd}, we obtain
	\begin{align*}
	|\mathcal{L}^{N}(W^{+(j+1)}_{l}-w^{+(j+1)}_{l})(x_{i})|
	&\le \epsilon \bigg|\bigg(\delta^{2}_{x}-\frac{d^{2}}{dx^{2}}\bigg) w^{+(j+1)}_{l}(x_{i})\bigg|+\mu |a^{j+\frac{1}{2}}(x_{i})|\bigg|\bigg(D^{+}_{x}-\frac{d}{dx}\bigg)w^{+(j+1)}_{l}(x_{i})\bigg|\\&+\bigg|D_{t}^{-}w_{l}^{+(j+1)}(x_{i})-\frac{\delta}{\delta t}w_{l}^{+(j+\frac{1}{2})}(x_{i})\bigg|\\
	&\le C_{1} \max_{\frac{N}{2}\le i\le N} h_{i}(\epsilon\|w^{+}_{xxx}\|_{\Omega^{+}}+\mu\|w^{+}_{xx}\|_{\Omega^{+}})+C_{2}(\Delta t)^{2})\\
	& \le \frac{C_{1}}{\theta_{2}}\bigg(\frac{1}{\sqrt{\epsilon}}\bigg)+C_{2}(\Delta t)^{2}\le C\bigg(\frac{N^{-1}}{\sqrt{\epsilon}}+(\Delta t)^2\bigg).
	\end{align*}
	Choosing the barrier function for the layer component as
	$$\psi^{j+1}(x_{i})=C_{1}\bigg(\frac{(d+\tau_{3}-x_{i})N^{-1}}{\sqrt{\epsilon} \ln N}+(\Delta t)^{2}\bigg)\pm(W_{l}^{+(j+1)}-w_{l}^{+(j+1)})(x_{i}),~(x_{i},t_{j+1})\in(d,d+\tau_{3})\times(0,T]$$
	For sufficiently large $C_{1}$, we have $\mathcal{L}^{N}\psi^{j+1}(x_{i})\le0,~(x_{i},t_{j+1})\in(d,d+\tau_{3})\times(0,T]$. Also $\psi^{j+1}(x_{\frac{N}{2}})\ge0$ and $\psi^{j+1}(x_{\frac{5N}{8}})\ge0$. Hence, by the discrete minimum principle \cite{ER1}, we can obtain the following bounds:
	\begin{align*}
	|(W_{l}^{+(j+1)}-w_{l}^{+(j+1)})(x_{i})|&\le C_{1} \bigg(\frac{(d+\tau_{3}-x_{i})N^{-1}}{\sqrt{\epsilon} \ln N}+(\Delta t)^{2}\bigg)\\& \le C_{1}\bigg(\frac{\tau_{3}N^{-1}}{\sqrt{\epsilon} \ln N}+(\Delta t)^{2}\bigg)
	\end{align*}
	\begin{equation}
	\label{WWW}
	|(W_{l}^{+(j+1)}-w_{l}^{+(j+1)})(x_{i})|\le C(N^{-1}+(\Delta t)^{2}),~(x_{i},t_{j+1})\in(d,d+\tau_{3})\times(0,T].
	\end{equation}
	Hence, by combining the equation\eqref{www} and \eqref{WWW}, we have bounds for the left layer component
	\begin{equation}
	\label{w+}
	\|W_{l}^{+}-w_{l}^{+}\|_{\Omega^{N+}}\le C(N^{-1}+(\Delta t)^{2}).
	\end{equation}
\end{proof}
\begin{lemma}
	\label{L3}
	Let $\sqrt{\alpha}\mu\le \sqrt{\rho \epsilon}$ and $W_{r}^{-(j+1)}(x_{i}), w_{r}^{-}(x,t)$ are solution of the problem \eqref{RW-} and \eqref{rw} respectively. The right singular component of the truncation error satisfies the following estimate in ${\Omega^{N-}}$
	$$\|W_{r}^{-}-w_{r}^{-}\|_{\Omega^{N-}}\le C(N^{-1}+(\Delta t)^{2})$$
\end{lemma}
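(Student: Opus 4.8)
The plan is to follow the pattern of Lemma \ref{L1} and Lemma \ref{L2}, now for the right layer component $w_r^-$, whose layer (by Theorem \ref{wlb}) sits at the interior point $x=d$ and decays like $e^{-\theta_2(d-x)}$ as one moves left into $\Omega^-$. I would split $\Omega^{N-}$ into an outer part $[0,d-\tau_2]\times(0,T]$ (indices $0\le i\le\frac{3N}{8}$) and an inner/layer part $(d-\tau_2,d)\times(0,T]$ (indices $\frac{3N}{8}<i<\frac{N}{2}$, where the Bakhvalov grading governed by $\phi_2$ on $[\frac{3}{8},\frac{1}{2}]$ acts), and estimate $W_r^--w_r^-$ on each piece before merging.

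On the outer part I would show that both components are already $O(N^{-4})$. For the exact solution, Theorem \ref{wlb} gives $|w_r^{-(j+1)}(x_i)|\le Ce^{-\theta_2(d-x_i)}\le Ce^{-\theta_2\tau_2}=CN^{-4}$, since $d-x_i\ge\tau_2$ and $\theta_2\tau_2=4\ln N$. For the discrete solution I would use that $\gamma_{r,i}^{-(j+1)}=\prod_{n=i+1}^{N/2}(1+\theta_2h_n)^{-1}$ increases toward $d$, so its maximum on the outer part is attained at $x_{3N/8}=d-\tau_2$; taking logarithms, applying $\log(1+t)\ge t-\tfrac{t^2}{2}$, and using $\sum_{n=3N/8+1}^{N/2}\theta_2h_n=\theta_2\tau_2=4\ln N$ together with the Cauchy--Schwarz estimate $\sum(\theta_2h_n)^2\le C$ that follows from $N^{-1}\int_{3/8}^{1/2}(\phi_2')^2\,d\xi\le C$ in Lemma \ref{assump}, exactly as in Lemma \ref{L1}, I obtain $|W_r^{-(j+1)}(x_i)|\le CN^{-4}$. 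Hence $|(W_r^--w_r^-)(x_i)|\le CN^{-4}$ on the outer part.

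On the inner part I would run the truncation-error/barrier argument. Since $i<\frac{N}{2}$ here, $D_x^\ast=D_x^-$, so
$$|\mathcal{L}^N(W_r^--w_r^-)(x_i)|\le\epsilon\Big|\big(\delta_x^2-\tfrac{d^2}{dx^2}\big)w_r^{-(j+1)}\Big|+\mu|a^{j+\frac{1}{2}}|\Big|\big(D_x^--\tfrac{d}{dx}\big)w_r^{-(j+1)}\Big|+\Big|D_t^-w_r^{-(j+1)}-\tfrac{\delta}{\delta t}w_r^{-(j+\frac{1}{2})}\Big|.$$
Substituting the derivative bounds $\|d^kw_r^-/dx^k\|_{\Omega^-}\le C\epsilon^{-k/2}$ of Theorem \ref{wwlbd}, the layer step bound $\max h_i\le C/\theta_2$ together with $1/\theta_2\le CN^{-1}$ (Assumption \ref{assump1}), and the $O((\Delta t)^2)$ temporal consistency bound yields $\le C(\tfrac{N^{-1}}{\sqrt\epsilon}+(\Delta t)^2)$. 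I would then take the barrier
$$\psi^{j+1}(x_i)=C_1\Big(\frac{(x_i-(d-\tau_2))N^{-1}}{\sqrt\epsilon\,\ln N}+(\Delta t)^2\Big)\pm(W_r^--w_r^-)(x_i),\quad(x_i,t_{j+1})\in(d-\tau_2,d)\times(0,T],$$
whose linear part increases toward the interface; because $a<0$ in $\Omega^-$ and the convection term uses $D_x^-$, the increasing profile makes $\mathcal{L}^N$ of the linear part negative, so $\mathcal{L}^N\psi^{j+1}\le0$ for $C_1$ large. Its boundary values are nonnegative, since the difference is $O(N^{-4})$ at $x_{3N/8}=d-\tau_2$ and vanishes at $x_{N/2}=d$ by the boundary condition $W_r^{-(j+1)}(x_{N/2})=w_r^-(d,t_{j+1})$ in \eqref{RW-}. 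The discrete minimum principle then gives, taking the barrier maximum at $x=d$,
$$|(W_r^--w_r^-)(x_i)|\le C_1\Big(\frac{\tau_2N^{-1}}{\sqrt\epsilon\,\ln N}+(\Delta t)^2\Big)=C(N^{-1}+(\Delta t)^2),$$
the $\ln N$ canceling against $\tau_2=\frac{4}{\theta_2}\ln N$. Combining with the outer estimate gives the claim.

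The step I expect to be the main obstacle is the inner-region barrier construction: one must verify that the linear barrier, with slope of order $N^{-1}/(\sqrt\epsilon\ln N)$, simultaneously forces $\mathcal{L}^N\psi^{j+1}\le0$ against the truncation error and, when evaluated at the interface $x=d$, collapses to the clean $O(N^{-1})$ bound with no leftover logarithmic factor. This cancellation is exactly what the Bakhvalov grading (through $\phi_2$ and the integral bounds of Lemma \ref{assump}) buys over a plain Shishkin mesh. A minor but necessary check is that the whole estimate lives at indices $i<\frac{N}{2}$, where $D_x^\ast=D_x^-$, so the three-point interface relation at $x_{N/2}$ does not enter this lemma.
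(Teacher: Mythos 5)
Your proposal is correct and takes essentially the same route as the paper's proof: the same splitting of $\Omega^{N-}$ at $x=d-\tau_{2}$, the same $O(N^{-4})$ outer-region estimates (exact layer decay from Theorem \ref{wlb} plus the discrete product--logarithm bound carried over from Lemma \ref{L1} using Lemma \ref{assump}), and the same inner-region truncation-error bound $C\bigl(N^{-1}/\sqrt{\epsilon}+(\Delta t)^{2}\bigr)$ followed by the linear barrier $C_{1}\bigl((x_{i}-(d-\tau_{2}))N^{-1}/(\sqrt{\epsilon}\,\ln N)+(\Delta t)^{2}\bigr)$ and the discrete minimum principle. The only differences are expository --- you write out the product estimate, the sign argument for $\mathcal{L}^{N}\psi^{j+1}\le 0$, and the boundary-value checks (including that the difference vanishes at $x_{N/2}$ by \eqref{RW-}) which the paper compresses into ``similar arguments as in Lemma \ref{L1}'' and ``for sufficiently large $C_{1}$''.
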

\begin{proof}
We first calculate the truncation error in the outer region $(0,d-\tau_{2}]\times(0, T]$: For $1\le i\le \frac{3N}{8}$, the left layer component satisfies the following bound given in theorem \eqref{wlb}:
	\begin{equation}
	\label{wR}
	|w_{r}^{-(j+1)}(x_{i})|\le Ce^{-\theta_{2}(d-x_{i})}\le Ce^{-\theta_{2}\tau_{2}}\le CN^{-4},~(x_{i},t_{j+1})\in (0,d-\tau_{2}]\times(0, T].
	\end{equation}
	Also $W^{-(j+1)}_{r}(x_{i})$ is a increasing function in $(0,d-\tau_{2}]\times(0,T]$,
	\begin{align*}
	|W^{-(j+1)}_{r}(x_{i})|&\le|W_{r}^{-(j+1)}(x_{\frac{3N}{8}})|= \prod_{n=\frac{3N}{8}+1}^{\frac{N}{2}}(1+\theta_{2}h_{n})^{-1}.
	\end{align*}
	Applying the similar arguments as in Lemma \eqref{L1}, we get
	\begin{align}
	\label{wr}
	|W_{r}^{-(j+1)}(x_{i})|\le CN^{-4},(x_{i},t_{j+1})\in (0,d-\tau_{2}]\times(0, T].
	\end{align}
	Hence, by combining the equation (\ref{wR}) and (\ref{wr}), we get
	\begin{equation}
	\label{wwww}
	|(W_{r}^{-(j+1)}-w_{r}^{-(j+1)})(x_{i})|\le |W_{r}^{-(j+1)}(x_{i})|+|w_{r}^{-(j+1)}(x_{i})|\le CN^{-4},~(x_{i},t_{j+1})\in (0,d-\tau_{2}]\times(0, T].
	\end{equation}
	
We use truncation error analysis to find an error in the inner region $(d-\tau_{2},d)\times(0, T]$. Using the derivative bounds for the left layer component $w_{r}^{-}$ from theorem \eqref{wwlbd}, we obtain
	\begin{align*}
	|\mathcal{L}^{N}(W^{-(j+1)}_{r}-w^{-(j+1)}_{r})(x_{i})|
	&\le \epsilon \bigg| \bigg(\delta^{2}_{x}-\frac{d^{2}}{dx^{2}}\bigg) w^{-(j+1)}_{r}(x_{i})\bigg|+\mu |a^{j+\frac{1}{2}}(x_{i})|\bigg|\bigg(D^{-}_{x}-\frac{d}{dx}\bigg)w^{-(j+1)}_{r}(x_{i})\bigg|\\&+\bigg|D_{t}^{-}w_{r}^{-(j+1)}(x_{i})-\frac{\delta}{\delta t}w_{r}^{-(j+\frac{1}{2})}(x_{i})\bigg|\\
	&\le C_{1} \max_{0\le i\le \frac{N}{2}} h_{i}(\epsilon\|w^{-}_{xxx}\|_{\Omega^{-}}+\mu\|w^{-}_{xx}\|_{\Omega^{-}})+C_{2}(\Delta t)^{2})\\
	& \le \frac{C_{1}}{\theta_{2}}\bigg(\frac{1}{\sqrt{\epsilon}}\bigg)+C_{2}(\Delta t)^{2}\le C\bigg(\frac{N^{-1}}{\sqrt{\epsilon}}+(\Delta t)^2\bigg).
	\end{align*}
		Choosing the barrier function for the layer component in $(x_{i},t_{j+1})\in(d-\tau_{2},d)\times(0, T]$ as
	$$\psi^{j+1}(x_{i})=C_{1}\bigg(\frac{(x_{i}-(d-\tau_{2}))N^{-1}}{\sqrt{\epsilon} \ln N}+(\Delta t)^{2}\bigg)\pm(W_{r}^{-(j+1)}-w_{r}^{-(j+1)})(x_{i})$$
	We can choose sufficiently large $C_{1}$ so that $\mathcal{L}^{N}\psi^{j+1}(x_{i})\le0, \forall~ (x_{i},t_{j+1})\in(d-\tau_{2},d)\times(0, T]$. Also $\psi^{j+1}(x_{\frac{3N}{8}})\ge0$ and $\psi^{j+1}(x_{\frac{N}{2}})\ge0$. Hence, by the discrete minimum principle \cite{ER1}, we can obtain the following bounds:
	\begin{align*}
	|(W_{r}^{-(j+1)}-w_{r}^{-(j+1)})(x_{i})|&\le C_{1} \bigg(\frac{(x_{i}-(d-\tau_{2}))N^{-1}}{\sqrt{\epsilon}\ln N}+(\Delta t)^{2}\bigg)\\& \le C_{1}\bigg(\frac{\tau_{2}N^{-1}}{\sqrt{\epsilon}\ln N}+(\Delta t)^{2}\bigg)
	\end{align*}
	\begin{equation}
	\label{WWWW}
	|(W_{r}^{-(j+1)}-w_{r}^{-(j+1)})(x_{i})|\le C(N^{-1}+(\Delta t)^{2}),~(x_{i},t_{j+1})\in(d-\tau_{2},d)\times(0, T].
	\end{equation}
Hence, by combining the equation\eqref{wwww} and \eqref{WWWW}, we have bounds for the right layer component
	\begin{equation}
	\label{wr-}
	\|W_{r}^{-}-w_{r}^{-}\|_{\Omega^{-}}\le C(N^{-1}+(\Delta t)^{2}).
	\end{equation}
\end{proof}
\begin{lemma}
	\label{L4}
		Let $\sqrt{\alpha}\mu\le \sqrt{\rho \epsilon}$ and $W_{r}^{+(j+1)}(x_{i}), w_{r}^{+}(x,t)$ are solution of the problem \eqref{RW+} and \eqref{rw} respectively. The right singular component of the truncation error satisfies the following estimate in $\Omega^{N+}$
	$$\|W_{r}^{+}-w_{r}^{+}\|_{\Omega^{N+}}\le C(N^{-1}+(\Delta t)^{2})$$
\end{lemma}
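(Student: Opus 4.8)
The plan is to follow the same two-region strategy used in Lemmas \ref{L1}--\ref{L3}, now specialized to the right layer component $w_r^+$, whose layer sits at the right boundary $x=1$ and decays like $Ce^{-\theta_1(1-x)}$ by Theorem \ref{wlb}. Accordingly I would split $\Omega^{N+}$ into the outer region $(d,1-\tau_4]\times(0,T]$ (indices $\frac{N}{2}<i\le\frac{7N}{8}$) and the inner fine-mesh region $(1-\tau_4,1)\times(0,T]$ (indices $\frac{7N}{8}<i\le N$), the latter being where the mesh is graded via $\phi_4$ with transition point $\tau_4=\frac{4}{\theta_1}\ln N$.

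In the outer region I would bound the error by the triangle inequality $|(W_r^+-w_r^+)(x_i)|\le|W_r^{+(j+1)}(x_i)|+|w_r^{+(j+1)}(x_i)|$ and show each term is $\mathcal{O}(N^{-4})$. For the continuous part, Theorem \ref{wlb} gives $|w_r^{+(j+1)}(x_i)|\le Ce^{-\theta_1(1-x_i)}\le Ce^{-\theta_1\tau_4}\le CN^{-4}$. For the discrete part, $W_r^{+(j+1)}$ is monotone increasing toward $x=1$, so it is dominated by its value at $x_{\frac{7N}{8}}$, namely $\prod_{n=\frac{7N}{8}+1}^{N}(1+\theta_1 h_n)^{-1}$; repeating verbatim the logarithm/H\"older estimate of Lemma \ref{L1}---now with $\phi_4$ in place of $\phi_1$ and invoking Lemma \ref{assump}---yields $|W_r^{+(j+1)}(x_i)|\le CN^{-4}$ on this region.

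In the inner region I would perform the standard truncation-error analysis. Using the derivative bounds $\|d^k w_r^+/dx^k\|_{\Omega^+}\le C\epsilon^{-k/2}$ from Theorem \ref{wwlbd} together with $h_i\le C\theta_1^{-1}=\mathcal{O}(N^{-1}\sqrt{\epsilon})$ on $[1-\tau_4,1]$, the consistency error satisfies $|\mathcal{L}^N(W_r^{+(j+1)}-w_r^{+(j+1)})(x_i)|\le C\big(\frac{N^{-1}}{\sqrt{\epsilon}}+(\Delta t)^2\big)$. I would then choose the barrier function
$$\psi^{j+1}(x_i)=C_1\Big(\tfrac{(x_i-(1-\tau_4))N^{-1}}{\sqrt{\epsilon}\,\ln N}+(\Delta t)^2\Big)\pm(W_r^{+(j+1)}-w_r^{+(j+1)})(x_i),$$
whose linear term grows from $0$ at $x_{\frac{7N}{8}}=1-\tau_4$ toward $x_N=1$, matching the growth of the layer. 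For large $C_1$ one checks $\mathcal{L}^N\psi^{j+1}(x_i)\le0$ and $\psi^{j+1}(x_{\frac{7N}{8}}),\psi^{j+1}(x_N)\ge0$, so the discrete minimum principle \cite{ER1} gives $|(W_r^+-w_r^+)(x_i)|\le C_1\big(\frac{\tau_4 N^{-1}}{\sqrt{\epsilon}\ln N}+(\Delta t)^2\big)$; since $\tau_4/(\sqrt{\epsilon}\ln N)=\frac{4}{\sqrt{\rho\alpha}}=C$ this reduces to $C(N^{-1}+(\Delta t)^2)$. Combining the two regions yields the claimed estimate.

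The only real obstacle is bookkeeping rather than mathematics: one must use the correct exponent $\theta_1$ (the right-boundary layer rate) and the correct mesh generator $\phi_4$ on $[1-\tau_4,1]$, and orient the barrier's linear profile so that it increases toward $x=1$ (mirroring Lemma \ref{L3}, where it increased toward $x=d$). Once these choices are fixed, every estimate is a direct transcription of Lemmas \ref{L1}--\ref{L3}.
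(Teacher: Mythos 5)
Your outer-region argument (triangle inequality, the $Ce^{-\theta_1\tau_4}\le CN^{-4}$ bound from Theorem \ref{wlb}, the product bound for the discrete layer via the logarithm/H\"older estimate of Lemma \ref{L1}) and your truncation-error estimate in the fine region coincide with the paper's proof. The genuine gap is the orientation of your barrier function on $(1-\tau_4,1)\times(0,T]$, which breaks the step ``$\mathcal{L}^{N}\psi^{j+1}(x_i)\le 0$ for large $C_1$.'' On $\Omega^{N+}$ the convection coefficient satisfies $a\ge\alpha_2>0$ and the scheme uses the forward difference $D_x^{+}$; since the discrete second difference of a linear function vanishes even on a nonuniform mesh,
$$\mathcal{L}^{N}\bigg(\frac{C_{1}\big(x_{i}-(1-\tau_{4})\big)N^{-1}}{\sqrt{\epsilon}\,\ln N}\bigg)=\mu a^{j+\frac{1}{2}}(x_{i})\,\frac{C_{1}N^{-1}}{\sqrt{\epsilon}\,\ln N}-c^{j+\frac{1}{2}}(x_{i})\,\frac{C_{1}\big(x_{i}-(1-\tau_{4})\big)N^{-1}}{\sqrt{\epsilon}\,\ln N},$$
and the first term is strictly positive because your slope has the same sign as $a$. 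At mesh points just to the right of the transition point $1-\tau_4$ the factor $x_i-(1-\tau_4)$ is essentially zero, so the helpful reaction term disappears and $\mathcal{L}^{N}\psi^{j+1}$ is a positive term proportional to $C_1$ plus a truncation error that may also be positive; enlarging $C_1$ enlarges the bad term proportionally, so no constant $C_1$ independent of $\epsilon,\mu,N,\Delta t$ yields $\mathcal{L}^{N}\psi^{j+1}\le0$, and the discrete minimum principle cannot be invoked. The paper instead uses the decreasing profile $(1-x_i)$, so that $\mu a\cdot(\mathrm{slope})\le0$ and every contribution of $\mathcal{L}^{N}$ applied to the barrier is nonpositive.

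Your guiding heuristic --- ``orient the linear profile so it increases toward $x=1$, mirroring Lemma \ref{L3}'' --- is precisely where the error enters: Lemma \ref{L3} lives on $\Omega^{N-}$, where $a\le-\alpha_1<0$, so an increasing profile there has slope of sign \emph{opposite} to $a$; the invariant rule across all four lemmas is that the slope times $a$ must be negative, which on $\Omega^{N+}$ forces a decreasing profile. A secondary defect of your orientation concerns the endpoint check: your barrier vanishes at $x_{\frac{7N}{8}}$, where the error is only known to be $\mathcal{O}(N^{-4})$, so $\psi^{j+1}(x_{\frac{7N}{8}})=C_{1}(\Delta t)^{2}\pm\mathcal{O}(N^{-4})$ need not be nonnegative when $\Delta t\ll N^{-2}$; the paper's barrier vanishes instead at $x_{N}=1$, where the error is exactly zero by the boundary condition $W_{r}^{+(j+1)}(x_{N})=w_{r}(1,t_{j+1})$ in \eqref{RW+}, and equals $CN^{-1}$ at $x_{\frac{7N}{8}}$, which dominates the $\mathcal{O}(N^{-4})$ outer-region bound there. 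With the linear term replaced by $(1-x_i)$, your argument becomes the paper's proof.
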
	
\begin{proof}
	We first calculate the truncation error in the outer region $(d,1-\tau_{4}]\times(0, T]$:	
For $\frac{N}{2}< i\le \frac{7N}{8}$, the left layer component satisfies the following bound given in the theorem \eqref{wlb}:
	\begin{equation}
	\label{wr+}
	|w_{r}^{+(j+1)}(x_{i})|\le Ce^{-\theta_{1}(1-x_{i})}\le Ce^{-\theta_{1}\tau_{4}}\le CN^{-4},~(x_{i},t_{j+1})\in(d,1-\tau_{4}]\times(0,T].
	\end{equation}
	Also $W^{+(j+1)}_{r}(x_{i})$ is a increasing function in $(d,1-\tau_{4}]\times(0, T]$, so
	\begin{align*}
	|W^{+(j+1)}_{r}(x_{i})|&\le|W_{r}^{+(j+1)}(x_{\frac{7N}{8}})|= \prod_{n=\frac{7N}{8}+1}^{N}(1+\theta_{1}h_{n})^{-1}. 
	\end{align*}
	Applying the similar arguments as in Lemma \eqref{L1}, we get
	\begin{align}
	\label{wrr}
	|W_{r}^{+(j+1)}(x_{i})|&\le CN^{-4}, ~~\forall(x_{i},t_{j+1})\in(d,1-\tau_{4}]\times(0, T].
	\end{align}
	Hence for all $(x_{i},t_{j+1})\in (1,d-\tau_{2}]\times(0, T]$ from equation (\ref{wr+}) and (\ref{wrr}), we get
	\begin{equation}
	\label{wwwww}
	|(W_{r}^{+(j+1)}-w_{r}^{+(j+1)})(x_{i})|\le |W_{r}^{+(j+1)}(x_{i})|+|w_{r}^{+(j+1)}(x_{i})|\le CN^{-4}.
	\end{equation}
	
	We use truncation error analysis to find error in the inner region $(1-\tau_{4},1)\times(0, T] $. Using the derivative bounds for the left layer component $w_{r}^{+}$ from theorem \eqref{wwlbd}, we obtain
	\begin{align*}
	|\mathcal{L}^{N}(W^{+(j+1)}_{r}-w^{+(j+1)}_{r})(x_{i})|
	&\le \epsilon \bigg|\bigg(\delta^{2}_{x}-\frac{d^{2}}{dx^{2}}\bigg) w^{+(j+1)}_{r}(x_{i})\bigg|+\mu |a^{j+\frac{1}{2}}(x_{i})|\bigg|\bigg(D^{+}_{x}-\frac{d}{dx}\bigg)w^{+(j+1)}_{r}(x_{i})\bigg|\\&+\bigg|D_{t}^{-}w_{r}^{+(j+1)}(x_{i})-\frac{\delta}{\delta t}w_{r}^{+(j+\frac{1}{2})}(x_{i})\bigg|\\
	&\le C_{1} \max_{\frac{N}{2}\le i\le N} h_{i}(\epsilon\|w^{-}_{xxx}\|_{\Omega^{+}}+\mu\|w^{-}_{xx}\|_{\Omega^{+}})+C_{2}(\Delta t)^{2})\\
	& \le \frac{C_{1}}{\theta_{1}}\bigg(\frac{1}{\sqrt{\epsilon}}\bigg)+C_{2}(\Delta t)^{2}\le C\bigg(\frac{N^{-1}}{\sqrt{\epsilon}}+(\Delta t)^2\bigg).
	\end{align*}
	Choosing the barrier function in the domain $(1-\tau_{4},1)\times(0, T] $ for the layer component as
	$$\psi^{j+1}(x_{i})=C_{1}\bigg(\frac{(1-x_{i})N^{-1}}{\sqrt{\epsilon} \ln N}+(\Delta t)^{2}\bigg)\pm(W_{r}^{+(j+1)}-w_{r}^{+(j+1)})(x_{i}),$$
	For sufficiently large $C_{1}$ $\mathcal{L}^{N}\psi^{j+1}(x_{i})\le0,\forall~(x_{i},_{j+1})\in(1-\tau_{4},1)\times(0, T] $. Also $\psi^{j+1}(x_{\frac{7N}{8}})\ge0$ and $\psi^{j+1}(x_{N})\ge0$. Hence, by the discrete minimum principle \cite{ER1}, we can obtain the following bounds:
	\begin{align*}
	|(W_{r}^{+(j+1)}-w_{r}^{+(j+1)})(x_{i})|&\le C_{1} \bigg(\frac{(1-x_{i})N^{-1}}{\sqrt{\epsilon} \ln N}+(\Delta t)^{2}\bigg)\\& \le C_{1}\bigg(\frac{\tau_{4}N^{-1}}{\sqrt{\epsilon} \ln N}+(\Delta t)^{2}\bigg)
	\end{align*}
	\begin{equation}
	\label{WWWWW}
	|(W_{r}^{+(j+1)}-w_{r}^{+(j+1)})(x_{i})|\le C(N^{-1}+(\Delta t)^{2}),~(x_{i},t_{j+1})(1-\tau_{4},1)\times(0, T].
	\end{equation}
	Hence, by combining the equation\eqref{wwwww} and \eqref{WWWWW}, we have bounds for the left layer component
	\begin{equation}
	\label{w-}
	\|W_{r}^{+}-w_{r}^{+}\|_{\Omega^{N+}}\le C(N^{-1}+(\Delta t)^{2}).
	\end{equation}
\end{proof}
\begin{theorem}
		The error $e^{j+1}(x_{\frac{N}{2}})$ estimated at the point of discontinuity $(x_{\frac{N}{2}},t_{j+1})=(d,t_{j+1}), 0\le j\le M-1$ satisfies the following estimate for $\sqrt{\alpha}\mu \le \sqrt{\rho\epsilon}$:
		$$|(D^{+}-D^{-})(U^{j+1}(x_{\frac{N}{2}})-u^{j+1}(x_{\frac{N}{2}}))|\le C\frac{\bar{h}}{\epsilon}$$
		where $\bar{h}=\max\{h_{\frac{N}{2}},h_{\frac{N}{2}+1}\}$.
\end{theorem}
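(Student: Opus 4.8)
The plan is to recognise that the whole quantity is nothing but the consistency error of the three-point interface formula evaluated on the exact solution. Since the fully discrete scheme enforces $D_x^{+}U^{j+1}(x_{\frac{N}{2}})=D_x^{-}U^{j+1}(x_{\frac{N}{2}})$ exactly, one has $(D^{+}-D^{-})U^{j+1}(x_{\frac{N}{2}})=0$, so that
\[
(D^{+}-D^{-})\big(U^{j+1}(x_{\frac{N}{2}})-u^{j+1}(x_{\frac{N}{2}})\big)=-(D^{+}-D^{-})u^{j+1}(x_{\frac{N}{2}}),
\]
and it remains only to bound $|(D^{+}-D^{-})u^{j+1}(x_{\frac{N}{2}})|$, where $u^{j+1}(x_{\frac{N}{2}})=u(d,t_{j+1})$.

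Next I would Taylor-expand the exact solution on each side of $d$ at the fixed time $t_{j+1}$. Because $u$ is continuous at $d$ and $C^{2}$ on $\Omega^{-}$ and $\Omega^{+}$ separately, a two-term expansion with Lagrange remainder gives
\[
D^{+}u^{j+1}(x_{\frac{N}{2}})=u_{x}(d+,t_{j+1})+\tfrac{h_{\frac{N}{2}+1}}{2}\,u_{xx}(\xi^{+},t_{j+1}),\qquad
D^{-}u^{j+1}(x_{\frac{N}{2}})=u_{x}(d-,t_{j+1})-\tfrac{h_{\frac{N}{2}}}{2}\,u_{xx}(\xi^{-},t_{j+1}),
\]
with $\xi^{+}\in(d,x_{\frac{N}{2}+1})$ and $\xi^{-}\in(x_{\frac{N}{2}-1},d)$. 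Subtracting yields
\[
(D^{+}-D^{-})u^{j+1}(x_{\frac{N}{2}})=[u_{x}](d,t_{j+1})+\tfrac{h_{\frac{N}{2}+1}}{2}\,u_{xx}(\xi^{+})+\tfrac{h_{\frac{N}{2}}}{2}\,u_{xx}(\xi^{-}).
\]

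The decisive step is that the leading jump term vanishes. Writing $u=v+w_{l}+w_{r}$ and using the interface relation $[(w_{r})_{x}](d,t)=-([v_{x}]+[(w_{l})_{x}])(d,t)$ imposed on the singular components, one finds $[u_{x}](d,t)=[v_{x}]+[(w_{l})_{x}]+[(w_{r})_{x}]=0$, i.e. the first spatial derivative of $u$ is continuous across $d$. Hence only the remainder terms survive. Invoking Theorem \ref{wwlbd} with $k=2$ (so that $\|v_{xx}\|\le C$ and $\|(w_{l})_{xx}\|,\|(w_{r})_{xx}\|\le C\epsilon^{-1}$) gives $\|u_{xx}\|_{\Omega^{-}\cup\Omega^{+}}\le C\epsilon^{-1}$, and therefore
\[
|(D^{+}-D^{-})u^{j+1}(x_{\frac{N}{2}})|\le\frac{h_{\frac{N}{2}+1}+h_{\frac{N}{2}}}{2}\,\|u_{xx}\|_{\Omega^{-}\cup\Omega^{+}}\le C\,\bar{h}\,\epsilon^{-1}=C\frac{\bar{h}}{\epsilon},
\]
which is the claimed estimate.

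The only conceptually important point — the \emph{hard part} — is this cancellation of the $O(1)$ contribution: the consistency error of a one-sided difference is $u_{x}(d\pm)+O(\bar{h}\,u_{xx})$, so without the matching condition $[u_{x}](d,t)=0$ the difference $(D^{+}-D^{-})u^{j+1}(x_{\frac{N}{2}})$ would be $O(1)$ rather than $O(\bar{h}/\epsilon)$. Once that is secured, the remaining work is the routine two-term Taylor estimate together with the $\epsilon^{-1}$ bound on $u_{xx}$ from Theorem \ref{wwlbd}; notably, no separate bound on $\bar{h}$ is needed, since $\bar{h}$ appears explicitly in the target estimate.
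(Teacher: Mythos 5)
Your proof is correct and takes essentially the same route as the paper: both first use $(D^{+}-D^{-})U^{j+1}(x_{\frac{N}{2}})=0$ to reduce everything to the one-sided consistency error of $u$ at $d$, and then bound it by $C\bar{h}\,\|u_{xx}\|_{\Omega^{-}\cup\Omega^{+}}\le C\bar{h}/\epsilon$ via Theorem \ref{wwlbd}. The only (welcome) difference is that you make explicit the cancellation $[u_{x}](d,t_{j+1})=0$ coming from the interface relations of the decomposition, a fact the paper uses implicitly when it treats $\frac{d}{dx}u^{j+1}(d)$ as a single-valued quantity in its triangle-inequality step.
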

\begin{proof}
	Consider 
	$$|(D^{+}-D^{-})(U^{j+1}(x_{\frac{N}{2}})-u^{j+1}(x_{\frac{N}{2}}))|\le|(D^{+}-D^{-})u^{j+1}(x_{\frac{N}{2}}))|,$$
	since $|(D^{+}-D^{-})U^{j+1}(x_{\frac{N}{2}}))|=0.$\\
	Now
	\begin{align*}
	|(D^{+}-D^{-})(U^{j+1}(x_{\frac{N}{2}})-u^{j+1}(x_{\frac{N}{2}}))|&\le\bigg|\bigg(\frac{d}{dx}-D^{+}\bigg)u^{j+1}(d)\bigg|+\bigg|\bigg(\frac{d}{dx}-D^{-}\bigg)u^{j+1}(d)\bigg|\\&\le C_{1}h_{\frac{N}{2}+1}|u_{xx}|_{\Omega^{+}}+C_{2}h_{\frac{N}{2}}|u_{xx}|_{\Omega^{-}}\\&\le C\bar{h}|u_{xx}|_{\Omega^{+}\cup{\Omega^{-}}}\\&\le
\frac{C\bar{h}}{\epsilon}~~(\text{from theorem} \eqref{wwlbd}).
	\end{align*}
\end{proof}
\begin{theorem}
	\label{main}
	Let us assume $\sqrt{\epsilon}<N^{-1}$ and $\sqrt{\alpha}\mu \le \sqrt{\rho\epsilon}$. Let $u(x,t)$ and $U^{j+1}(x_{i})$ be the solutions of the continuous and discrete problems \eqref{twoparaparabolic} and \eqref{Full-dis} respectively, then,
	$$\|U-u\|_{\Omega}\le
	C(N^{-1}+\Delta t^{2})$$
	where C is a constant independent of $\epsilon, \mu$ and discretization parameter $N,M$. 
\end{theorem}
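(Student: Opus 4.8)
The plan is to bound the nodal error $e^{j+1}(x_i)=U^{j+1}(x_i)-u(x_i,t_{j+1})$ and then take the maximum over all mesh points and time levels. Since both solutions carry the splitting $u=v+w_l+w_r$ and $U=V+W_l+W_r$, on each mesh subdomain away from $d$ I would write
$$e^{j+1}=(V^{j+1}-v^{j+1})+(W_l^{j+1}-w_l^{j+1})+(W_r^{j+1}-w_r^{j+1}),$$
and estimate each summand separately. The regular part is controlled by the regular-component error theorem and the four layer parts by Lemmas \ref{L1}--\ref{L4}, each contributing $C(N^{-1}+(\Delta t)^2)$. The triangle inequality then gives $|e^{j+1}(x_i)|\le C(N^{-1}+(\Delta t)^2)$ at every interior node $x_i\in\Gamma^{N-}\cup\Gamma^{N+}$, uniformly in $j$. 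This step is routine bookkeeping; the whole difficulty is concentrated at the single interface node $x_{\frac{N}{2}}=d$.

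At $x_{\frac{N}{2}}$ the scheme imposes the flux-matching relation $D^+_xU^{j+1}(x_{\frac{N}{2}})=D^-_xU^{j+1}(x_{\frac{N}{2}})$, so the interface error theorem supplies $|(D^+_x-D^-_x)e^{j+1}(x_{\frac{N}{2}})|\le C\bar h/\epsilon$ with $\bar h=\max\{h_{\frac{N}{2}},h_{\frac{N}{2}+1}\}$. Expanding this jump and solving for the central value represents $e^{j+1}(x_{\frac{N}{2}})$ as a convex combination of its neighbours plus a defect term,
$$e^{j+1}(x_{\frac{N}{2}})=\frac{h_{\frac{N}{2}}\,e^{j+1}(x_{\frac{N}{2}+1})+h_{\frac{N}{2}+1}\,e^{j+1}(x_{\frac{N}{2}-1})}{h_{\frac{N}{2}}+h_{\frac{N}{2}+1}}-\frac{h_{\frac{N}{2}}\,h_{\frac{N}{2}+1}}{h_{\frac{N}{2}}+h_{\frac{N}{2}+1}}\,(D^+_x-D^-_x)e^{j+1}(x_{\frac{N}{2}}).$$
Because $x_{\frac{N}{2}\pm1}$ lie in $\Gamma^{N-}$ and $\Gamma^{N+}$, the first term is already bounded by $C(N^{-1}+(\Delta t)^2)$ from the interior estimate, while the weight in front of the jump is at most $\min\{h_{\frac{N}{2}},h_{\frac{N}{2}+1}\}\le\bar h$, so the defect contributes at most $\bar h\cdot C\bar h/\epsilon=C\bar h^2/\epsilon$. (The same conclusion can be reached by feeding a discrete tent barrier built from the layer weights $\gamma^{-}_{r,i},\gamma^{+}_{l,i}$ of the preceding theorem into the comparison theorem opening Section \ref{4}, but the convex-combination identity is cleaner.)

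The decisive point is therefore the size of $\bar h$ at the discontinuity. Here the graded mesh is obtained by inverting $e^{-\theta_{2}(x-d)}$ at $d$ with $\theta_{2}=\tfrac{\sqrt{\rho\alpha}}{2\sqrt{\epsilon}}$, and a direct computation of the two adjacent spacings—the mesh is finest precisely at $d$, where the relevant mesh-generating derivative is $O(1)$ rather than $O(N)$—gives the refined estimate $\bar h\le C\theta_{2}^{-1}N^{-1}\le C\sqrt{\epsilon}\,N^{-1}$. Consequently $\bar h^2/\epsilon\le CN^{-2}$, and combining with the interior bound yields $|e^{j+1}(x_{\frac{N}{2}})|\le C(N^{-1}+(\Delta t)^2)$. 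Taking the maximum over all nodes and all time levels $0\le j\le M-1$, recalling that $e^{0}\equiv0$ and that the temporal part of every component estimate already carries the $(\Delta t)^2$ rate established by the global time-error theorem, produces $\|U-u\|_\Omega\le C(N^{-1}+(\Delta t)^2)$. I expect the interface defect to be the main obstacle: the bare bound $C\bar h/\epsilon$ is of order one or larger, and only the sharp spacing estimate $\bar h\le C\sqrt{\epsilon}\,N^{-1}$—finer than the crude $\bar h\le CN^{-1}$ that suffices elsewhere—turns the quadratic factor $\bar h^2/\epsilon$ into $O(N^{-2})$ and rescues the first-order rate. Verifying this sharp spacing and checking that Assumption \ref{assump1} keeps the analysis in the layer-resolving regime is where the real work lies.
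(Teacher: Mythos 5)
Your proposal is correct, and its first half coincides with the paper's proof: the interior bound $|e^{j+1}(x_i)|\le C(N^{-1}+(\Delta t)^2)$ on $\Omega^{N-}\cup\Omega^{N+}$ is obtained, exactly as in the paper, by combining the regular-component estimate with Lemmas \ref{L1}--\ref{L4}. Where you genuinely diverge is at the interface node. The paper builds a piecewise-linear tent barrier $\psi_1^{j+1}$ on $(d-\tau_2,d+\tau_3)$, peaked at $d$ with slope $C_2\tau/(\epsilon N(\log N)^2)$, and applies the discrete minimum principle together with the jump estimate $|(D^+-D^-)e^{j+1}(x_{\frac{N}{2}})|\le C\bar h/\epsilon$, so that the tent's peak value bounds $|e^{j+1}(d)|$. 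You instead solve the jump relation algebraically: your identity expressing $e^{j+1}(x_{\frac{N}{2}})$ as a convex combination of $e^{j+1}(x_{\frac{N}{2}\pm1})$ minus $\tfrac{h_{\frac{N}{2}}h_{\frac{N}{2}+1}}{h_{\frac{N}{2}}+h_{\frac{N}{2}+1}}\,(D^+-D^-)e^{j+1}(x_{\frac{N}{2}})$ is exact, and your sharp spacing claim is right --- the mesh-generating functions $\phi_2,\phi_3$ have $O(1)$ derivative at $\xi=\tfrac12$, so the Bakhvalov grading is finest at $d$ and $\bar h\le C/(\theta_2N)\le C\sqrt{\epsilon}\,N^{-1}$, giving an interface defect $C\bar h^2/\epsilon\le CN^{-2}$. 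This local argument buys something real: the paper's tent must span the whole layer region of width $\tau\sim\sqrt{\epsilon}\ln N$, and for its derivative jump to dominate $C\bar h/\epsilon$ with $\bar h\sim(\theta_2N)^{-1}$ the constant $C_2$ actually has to grow like $\ln N$, which would put a hidden logarithmic factor into the paper's interface bound; your route avoids the minimum principle there entirely and produces a genuinely log-free $O(N^{-2})$ defect. One caveat applies equally to both arguments and is inherited rather than created by you: the neighbor bounds at $x_{\frac{N}{2}\pm1}$ come from discrete components whose interface values are prescribed from the continuous decomposition, whereas $U$ itself is pinned down at $x_{\frac{N}{2}}$ by the flux-matching equation, so strictly those ``interior'' bounds are not established independently of $e^{j+1}(x_{\frac{N}{2}})$ (and since your convex-combination weights sum to one, the step would be vacuous if they were not). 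The paper asserts the interior bound unconditionally and so do you, so this is not a gap of your proposal relative to the paper's own proof.
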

\begin{proof}
	Combining the lemmas \eqref{L1},\eqref{L2},\eqref{L3} and \eqref{L4}, we obtain the following bound for $(x_{i},t_{j+1})\ne(d,t_{j+1})$
	\begin{equation}
	\label{U}
	|(U-u)(x_{i},t_{j+1})|\le C(N^{-1}+\Delta t^{2}),~~\forall~(x_{i},t_{j+1})\in \Omega^{N-}\cup\Omega^{N+}.
	\end{equation}
To obtain error at the point of discontinuity $(x_{i},t_{j+1})=(x_{\frac{N}{2}},t_{j+1})$ for the first case $\sqrt{\alpha}\mu \le \sqrt{\rho\epsilon}$, we consider the discrete barrier function $\phi_{1}^{j+1}(x_{i})=\psi_{1}^{j+1}(x_{i})\pm e^{j+1}(x_{i})$ in the interval $(d-\tau_{2},d+\tau_{3})$ where
$$\psi_{1}^{j+1}(x_{i})=C_{1}(\Delta  t)^{2}+\frac{C_{2}\tau}{\epsilon N (\log N)^{2}}\left\{
\begin{array}{ll}
\displaystyle x_{i}-(d-\tau_{2}), & \hbox{ $(x_{i},t_{j+1})\in \Omega^{N,M}\cap(d-\tau_{2}, d] $, }\\
d+\tau_{3}-x_{i}, & \hbox{ $(x_{i},t_{j+1})\in\Omega^{N,M}\cap[d, d+\tau_{3})$.}
\end{array}
\right. $$
where $\tau=\max\{\tau_{2},\tau_{3}\}$. It could be seen that $\phi_{1}^{j+1}(d-\tau_{2})$ and $\phi_{1}^{j+1}(d+\tau_{3})$ are non negative and
$\mathcal{L}^{N}\phi_{1}^{j+1}(x_{i})\le 0,~~(x_{i},t_{j+1})\in\Omega^{N,M},~~|(D^{+}-D^{-})\phi_{1}^{j+1}(x_{\frac{N}{2}}))|\le0.$\\
Hence, by applying the discrete minimum principle, we get, $\phi_{1}^{j+1}(x_{i})\ge0.$\\
Therefore, for $(x_{i},t_{j+1})\in (d-\tau_{2},d+\tau_{3})$
\begin{equation}
\label{Yd}
|e^{j+1}({x_{i}})|=|(U-u)(x_{i},t_{j+1})|\le C_{1}(\Delta t)^{2}+\frac{C_{2}\tau^{2}}{\epsilon N(\log N)^{2}}\le C(N^{-1}+(\Delta t)^{2}).
\end{equation}
Combining the equation \eqref{U} and \eqref{Yd}, we get the required result.
\end{proof}


\section{Numerical Examples}

In this section, we examine two test problems with discontinuous convection coefficients and discontinuous source terms to validate the proposed method.
\begin{Example}\label{ex-a}
	$$\epsilon u_{xx}+\mu a(x,t)u_{x}-b(x,t)u-u_{t}=f(x,t)~~ ~~(x,t)\in(0,.5)\cup(0.5,1)\times(0,1],$$
	$$	u(0,t)=u(1,t)=u(x,0)=0,$$\nonumber\
	with \\
	$$a(x,t)=\left\{
	\begin{array}{ll}
	\displaystyle -(1+x(1-x)), & \hbox{ $0\le x\le 0.5,t\in(0,1]$, }\\
	1+x(1-x), & \hbox{ $0.5<x\le1,t\in(0,1]$,}
	\end{array}
	\right.$$
	and
	$$
	f(x,t)=\left\{
	\begin{array}{ll}
	\displaystyle -2(1+x^2)t, & \hbox{ $0\le x\le 0.5,t\in(0,1]$, }\\
	2(1+x^2)t, & \hbox{ $0.5<x\le1,t\in(0,1]$.}
	\end{array}
	\right.$$\\
	$b(x,t)=1+exp(x),~\text{and} ~c(x,t)=1$.
\end{Example}
\begin{Example}\label{ex-b}
	$$\epsilon u_{xx}+\mu a(x,t)u_{x}-b(x,t)u-u_{t}=f(x,t)~~ ~~(x,t)\in(0,.5)\cup(0.5,1)\times(0,1],$$
	$$	u(0,t)=u(1,t)=u(x,0)=0,$$\nonumber\
	with \\
	$$a(x,t)=\left\{
	\begin{array}{ll}
	\displaystyle -(1+x(1-x)), & \hbox{ $0\le x\le 0.5,t\in(0,1]$, }\\
	1+x(1-x), & \hbox{ $0.5<x\le1,t\in(0,1]$,}
	\end{array}
	\right. $$	
	and
	$$
	f(x,t)=\left\{
	\begin{array}{ll}
	\displaystyle -2(1+x^2)t, & \hbox{ $0\le x\le 0.5,t\in(0,1]$, }\\
	3(1+x^2)t, & \hbox{ $0.5<x\le1,t\in(0,1]$.}
	\end{array}
	\right.$$\\
	$b(x,t)=1+exp(x),~\text{and} ~c(x,t)=1$.
\end{Example}
As the exact solutions of eg. \ref{ex-a} and eg. \ref{ex-b} is unknown, to calculate the maximum point-wise error and the rate of convergence, we use the double mesh principle \cite{DV,D}. The double mesh difference is defined by
$$E_{\epsilon,\mu}^{N,M}= \max\limits_{j}\bigg(\max\limits_{i}|U_{2i,2j}^{2N,2M}-U_{i,j}^{N,M}|\bigg)$$
where $U_{i}^{N,M}$ and $U_{2i}^{2N,2M}$ are the solutions on the mesh $\bar{\Omega}^{N,M}$ and $\bar{\Omega}^{2N,2M}$ respectively. 
The order of convergence is given by
$$R_{\epsilon,\mu}^{N,M}=\log_{2}\bigg(\frac{E_{\epsilon,\mu}^{N,M}}{E_{\epsilon,\mu}^{2N,2M}}\bigg).$$
In the numerical examples, we have taken $N=M$.

The Figure \ref{fig21} represents the surface plot of numerical solution and error graph for $\epsilon=10^{-12}$, $\mu=10^{-8}$ and $N=64$. We note that the maximum point-wise error is at the point of discontinuity. 
In Table \ref{eg11}, we present the maximum point-wise error $E_{\epsilon,\mu}^{N,M}$ and approximate orders of convergence $R_{\epsilon,\mu}^{N,M}$ for Example \ref{ex-a} for various values of $\mu$ when $\epsilon=10^{-8}$. 
\begin{table}[ht!]
	\centering
	\caption{Maximum point-wise error $E_{\epsilon,\mu}^{N,M}$ and approximate orders of convergence $R_{\epsilon,\mu}^{N,M}$ for Example \ref{ex-a} when $\epsilon=10^{-8}$}
	\label{eg11}
	\begin{tabular}{ |c|c|c|c|c| } 
		\hline
		\multirow{2}{*}{$\mu$ }&\multicolumn{4}{c|}{Number of mesh points N} \\ 
	\cline{2-5}
		  & 64 & 128 & 256 & 512 \\ 
		\hline
		$10^{-6}$ & 0.039036&	0.019471&	0.009595&	0.004722  \\ 
		\hline
		Order & 1.0035&	1.0210&	1.0229& \\
		\hline
		$10^{-7}$ &0.039041&	0.019476&	0.009597&	0.004723 \\
		\hline
		Order &1.0033	&1.0211&	1.0230&	 \\
		\hline
		$10^{-8}$ & 0.039042&	0.019477&	0.009597&	0.004722 \\ 
		\hline
		Order  & 0.7480	&0.8690	&0.9254 &	\\
		\hline
		$10^{-9}$ & 0.039042&	0.019477&	0.009597&	0.004722\\ 
		\hline
		Order  & 1.0032&	1.0211&	1.0230&\\
		\hline
		$10^{-10}$ & 0.039042&	0.019477&	0.009591&	0.004722\\ 
		\hline
		Order  &1.0032&	1.0211&	1.0230&\\
		\hline
		$10^{-11}$ &0.039042&	0.019477&	0.009597&	0.004722\\ 
		\hline
		Order  &1.0032&	1.0211&	1.02303&\\
		\hline
		$10^{-12}$ &0.039043&	0.019487&	0.009599&	0.004732\\ 
		\hline
		Order  &1.0025&	1.0215&	1.0203&\\
		\hline
		$10^{-13}$ &0.039041&	0.019492&	0.009560&	0.004751\\ 
		\hline
		Order  &1.0024&	1.0274&	1.0083&\\
		\hline
		$10^{-14}$ &0.039042&	0.019477&	0.009597&	0.004722\\ 
		\hline
		Order  &1.0024&	1.0274&	1.0083&\\
		\hline
		$10^{-15}$ &0.039042&	0.019477&	0.009597&	0.004722\\ 
		\hline
		Order  &1.0024&	1.0274&	1.0083&\\
		\hline
		$10^{-16}$&0.039042&	0.019477&	0.009597&	0.004722\\ 
		\hline
		Order  &1.0024&	1.0274&	1.0083&\\
		\hline
	\end{tabular}
\end{table}
\begin{figure}[ht!]\small
	\begin{subfigure}{0.3\textwidth}
		\centering
		{\includegraphics[width=3in,height=2in]{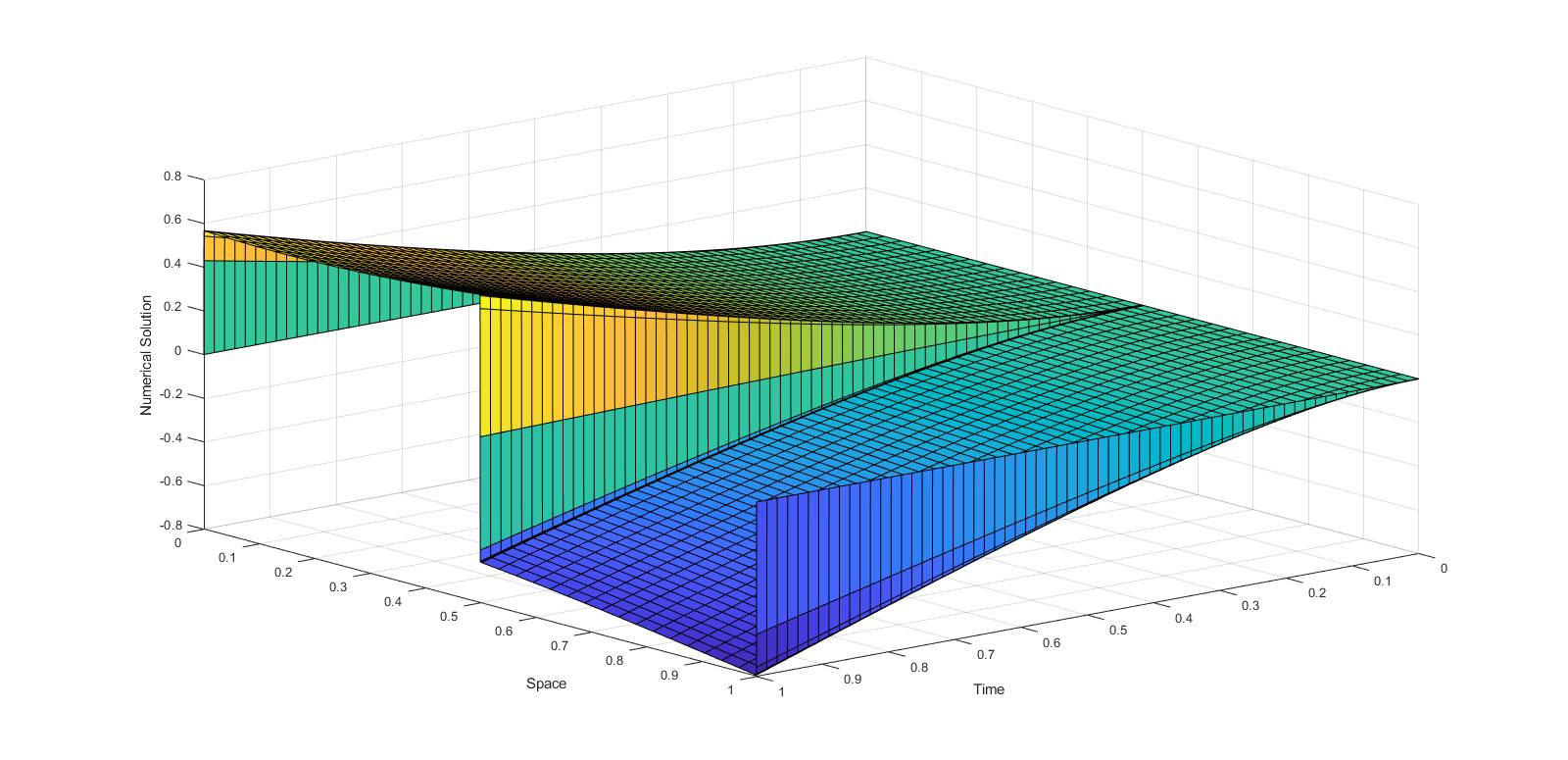}}
		\caption{Numerical Solution}
	\end{subfigure}
	\hfill
	\begin{subfigure}{0.5\textwidth}
		\centering
		{\includegraphics[width=3in,height=2.in]{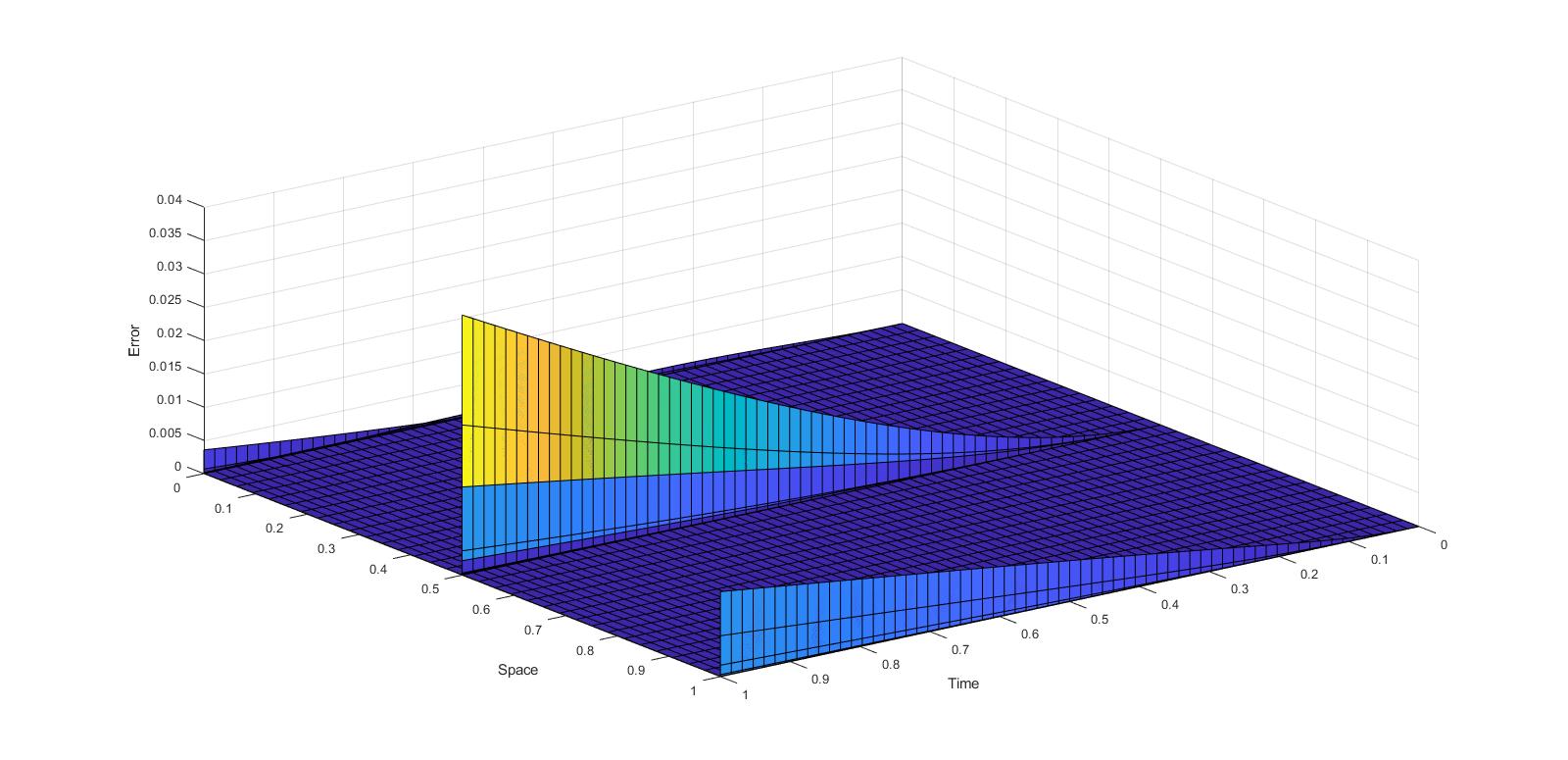}}
		\caption{Error$=|Y^{2N,2M}(x_{i})-Y^{N,M}(x_{i})|$}	
	\end{subfigure}
	\caption{(a) and (b): Plots of numerical solution and errors for $\epsilon=10^{-12}, \mu=10^{-8}$} when $N=64$ for Example \ref{ex-a}.	
	\label{fig21}
\end{figure}\\

The Figure \ref{fig22} represents the surface plots of numerical solution and error graph for $\epsilon=10^{-8}$, $\mu=10^{-8}$ and $N=128$.
In Table \ref{eg12}, we present the maximum point-wise error and  $E_{\epsilon,\mu}^{N,M}$ and approximate orders of convergence $R_{\epsilon,\mu}^{N,M}$ for Example \ref{ex-b} for various values of $\mu$ when $\epsilon=10^{-12}$. In both examples, we note that the overall order of convergence is one as $N=M$.

\begin{table}[ht!]
	\centering
	\caption{Maximum point-wise error $E_{\epsilon,\mu}^{N,M}$ and approximate orders of convergence $R_{\epsilon,\mu}^{N,M}$ for Example \ref{ex-b} when $\epsilon=10^{-12}$}
	\label{eg12}
	\begin{tabular}{ |c|c|c|c|c| } 
		\hline
		\multirow{2}{*}{$\mu$ }&\multicolumn{4}{c|}{Number of mesh points N} \\ 
			\cline{2-5}
  & 64 & 128 & 256 & 512 \\ 
		\hline
		$10^{-7}$ & 0.048709&	0.024266&	0.011958&	0.005888  \\ 
		\hline
		Order &1.00522&	1.02091&	1.02206& \\
		\hline
		$10^{-8}$ &0.048775&	0.024330&	0.011989&	0.005900 \\
		\hline
		Order &1.00337&	1.02100&	1.02291&	 \\
		\hline
		$10^{-9}$ & 0.048782&	0.024337&	0.011992&	0.005901 \\ 
		\hline
		Order  &1.00318&	1.02101&	1.02301&\\
		\hline
		$10^{-10}$ &0.048782&	0.024337&	0.011992&	0.005901\\ 
		\hline
		Order  & 1.00316&	1.02101&	1.02301&\\
		\hline
		$10^{-11}$ & 0.048782&	0.024337&	0.011993&	0.005901\\ 
		\hline
		Order  &1.00316&	1.02101&	1.02302&\\
		\hline
		$10^{-12}$ &0.048782&	0.024338&	0.011993&	0.005901\\ 
		\hline
		Order  &1.00316&	1.02101&	1.02302&\\
		\hline
		$10^{-13}$ &0.048782&	0.024338&	0.011993&	0.005901\\ 
		\hline
		Order  &1.00316&	1.02101&	1.02302&\\
		\hline
		$10^{-14}$ &0.048783&	0.024338	&0.01199301&	0.0059016\\ 
		\hline
		Order  &1.00316&	1.02101&	1.02302&\\
		\hline
		$10^{-15}$&0.048783&	0.024338	&0.01199301&	0.0059016\\ 
		\hline
		Order  &1.00316&	1.02101&	1.02302&\\
		\hline
		$10^{-16}$ &0.048783&	0.024338	&0.01199301&	0.0059016\\ 
		\hline
		Order  &1.00316&	1.02101&	1.02302&\\
		\hline
		$10^{-17}$ &0.048783&	0.024338	&0.01199301&	0.0059016\\ 
		\hline
		Order  &1.00316&	1.02101&	1.02302&\\
		\hline
	\end{tabular}
\end{table}
\begin{figure}[ht!]\small
	\begin{subfigure}{0.3\textwidth}
		\centering
		{\includegraphics[width=3in,height=2in]{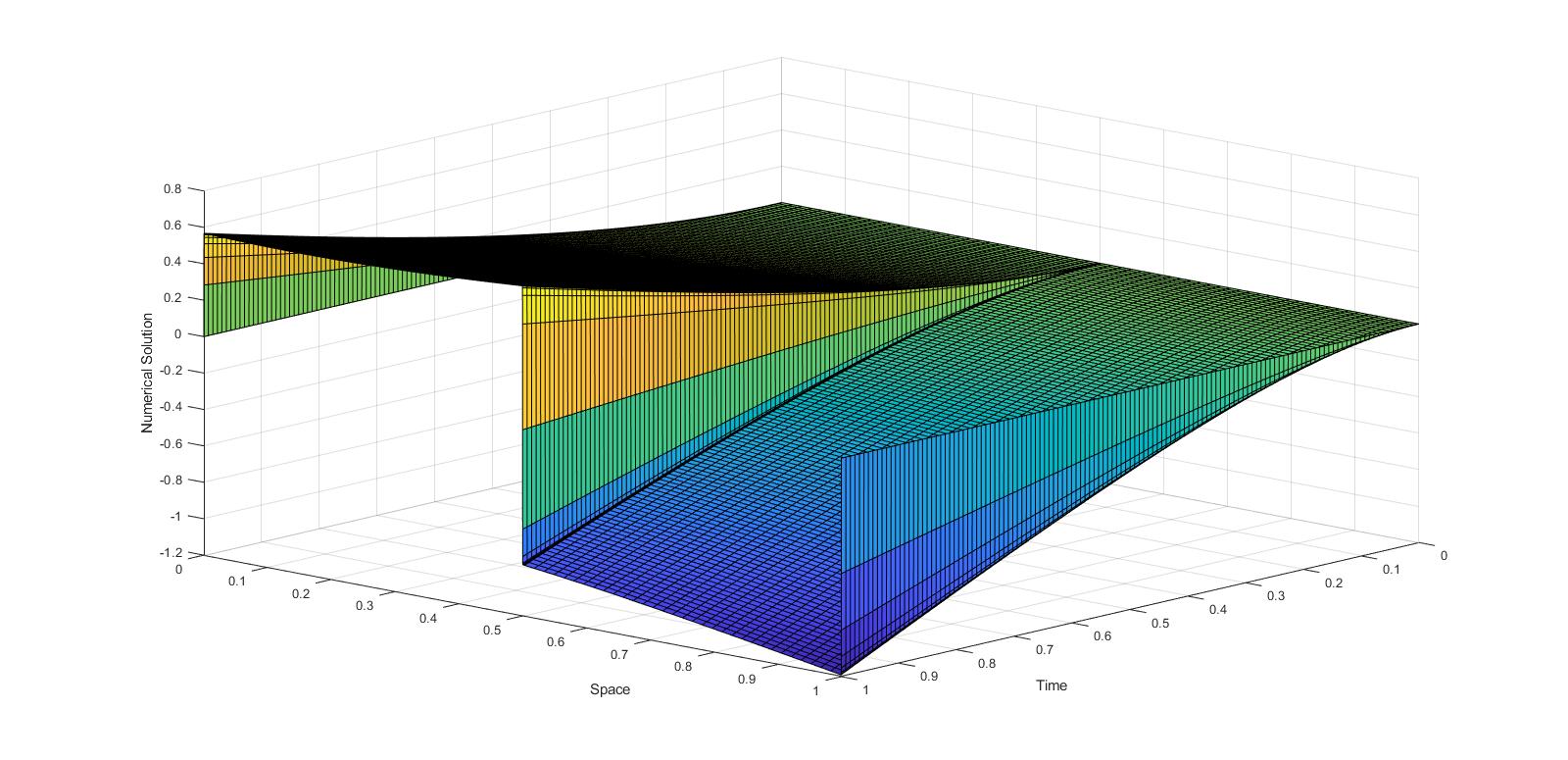}}
		\caption{Numerical Solution}
	\end{subfigure}
	\hfill
	\begin{subfigure}{0.5\textwidth}
		\centering
		{\includegraphics[width=3in,height=2.in]{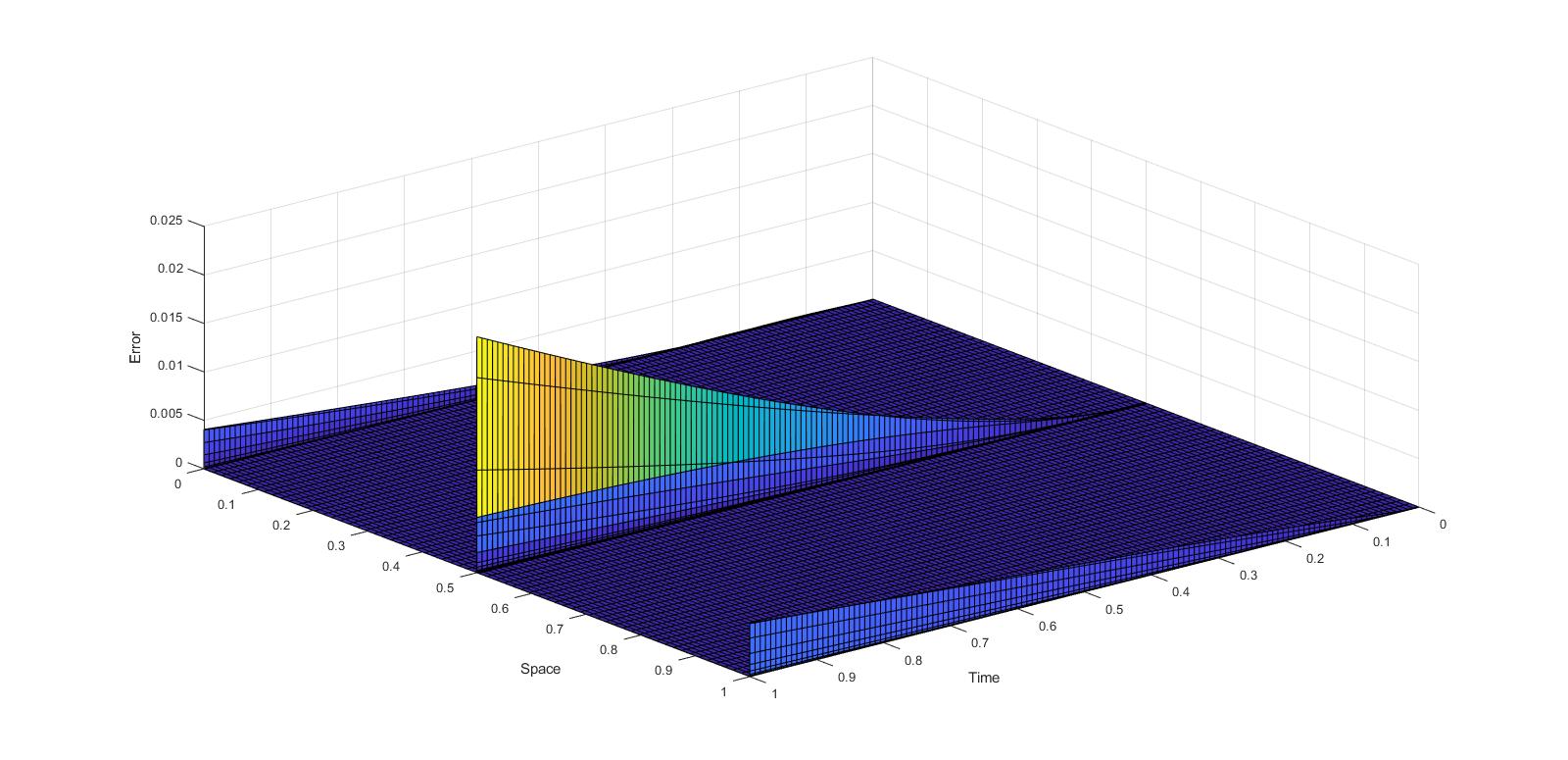}}
		\caption{Error$=|Y^{2N,2M}(x_{i})-Y^{N,M}(x_{i})|$}	
	\end{subfigure}
	\caption{(a) and (b): Plots of numerical solution and errors for $\epsilon=10^{-8}, \mu=10^{-8}$} when $N=128$ for Example \ref{ex-b}.	
	\label{fig22}
\end{figure}

\section{Conclusions}
Here, we have considered a two-parameter singularly perturbed parabolic problem with a discontinuous convection co-efficient and source term. The solution to this problem exhibits boundary layers near the boundaries and internal layers near the point of discontinuity due to the discontinuity of the convection coefficient and the source term. We have proposed a parameter uniform numerical scheme for the case  $\sqrt{\alpha} \mu\le\sqrt{\rho\epsilon}$. In the temporal direction, we employed the Crank-Nicolson method on a uniform mesh, and in the spatial direction, we used an upwind finite difference scheme on a Shishkin-Bakhvalov mesh. At the point of discontinuity, we used a three-point formula. The use of Crank-Nicolson method and Shishkin-Bakhvalov mesh helped to obtain second-order accuracy in time and first-order accuracy in space; unlike the Shishkin mesh, where a logarithmic factor deteriorates the order of convergence. The theoretical analysis was supported by the numerical results. 


\section*{Declaration of interests}
\mbox{\ooalign{$\checkmark$\cr\hidewidth$\square$\hidewidth\cr}} The authors declare that they have no known competing financial interests or personal relationships that could have appeared to influence the work reported in this paper.

\end{document}